\title{On uniform consistency of nonparametric tests II}
\author{Mikhail Ermakov }
\date{October 2020}
\numberwithin{equation}{section}
\theoremstyle{plain}
\newtheorem{theorem}{Theorem}[section]
\newtheorem{lemma}{Lemma}[section]
\newtheorem{remark}{Remark}[section]
\newtheorem{proposition}{Proposition}[section]
\begin{document}
\global\long\def\zb{\boldsymbol{z}}
\global\long\def\ub{\boldsymbol{u}}
\global\long\def\vb{\boldsymbol{v}}
\global\long\def\wb{\boldsymbol{w}}
\global\long\def\tb{\boldsymbol{t}}
\global\long\def\eb{\boldsymbol{e}}
\global\long\def\sb{\boldsymbol{s}}
\global\long\def\0b{\boldsymbol{0}}
\global\long\def\xb{\boldsymbol{x}}
\global\long\def\xib{\boldsymbol{\xi}}
\global\long\def\etab{\boldsymbol{\eta}}
\global\long\def\omegab{\boldsymbol{\omega}}
\maketitle
Institute of Problems of Mechanical Engineering RAS, Bolshoy pr., 61, VO, 1991178  St. Petersburg and
St. Petersburg State University, Universitetsky pr., 28, Petrodvoretz, 198504 St. Petersburg, RUSSIA

AMS subject classification:
62F03, 62G10, 62G2

keywords :
 Kolmogorov test,
consistency,
goodness of fit tests

\footnote{Research has been supported RFFI Grant  20-01-00273.}

\begin{abstract} For Kolmogorov type tests we point out necessary and sufficient   conditions of uniform consistency of sets of alternatives approaching to hypothesis. Sets of alternatives can be defined both in terms of distribution functions and in terms of densities.
\end{abstract}
\section{Introduction}
Let $X_1,\ldots,X_n$ be sample of i.i.d.r.v's having c.d.f.   $F_n\in \Im$, where $\Im$ is set of all c.d.f.'s of random variables taking values into interval $[0,1]$.

Define c.d.f. $F_0(x)= x$ for $x \in [0,1]$.

  We verify hypothesis
\begin{equation}\label{i1}
\mathbb{H}_0 : F_n = F_0,
\end{equation}
versus sets of alternatives defined in terms of

distribution functions
\begin{equation}\label{ax2}
\mathbb{H}_n \,:\, F_n \in \Upsilon_n, \quad \Upsilon_n \subset \Im
\end{equation}
or densities $p_n(x) = 1 + f_n(x) = \frac{d\,F_n(x)}{d\,x},$
\begin{equation}\label{axg}
\mathbb{H}_{1n} \,:\, f_n \in \Psi_n, \quad \Psi_n \subset \mathbb{L}_\infty(0,1).
\end{equation}
Here $\mathbb{L}_\infty(0,1)$ is Banach space of functions $h(x)$, $x \in [0,1]$, with the norm $\| h \|_\infty = \mbox{ess}\sup_{x \in (0,1)} |h(x)|$.

Denote $\hat F_n$  empirical c.d.f. of sample $X_1,\ldots,X_n$.

On the set $\Im$  of  distribution functions  define functional
$$
T(F) = \max_{x \in [0,1]} |F(x) - F_0(x) |
$$
Then $T(\hat F_n)$ is {\sl Kolmogorov test statistics}.

We explore  conditions on nonparametric sets of alternatives  $\Upsilon_n$ ( $\Psi_n$ respectively), under which Kolmogorov test is uniformly consistent. We call such sequences of sets of alternatives uniformly consistent.

Consistency and relative efficiency of Kolmogorov test was explored mainly for parametric sets of alternatives \cite{chib,durb,ha,ken,wel}. If sets of alternatives are Besov bodies  in Besov space $\mathbb{B}^s_{2\infty}$ with deleted "small ball" in   $\mathbb{L}_2(0,1)$, uniform consistency of Kolmogorov test has been explored Ingster \cite{ing87}. The  definition of uniform consistency in \cite{ing87} is different and can be considered as a version of uniform $\alpha$-consistency of this paper.

For  nonparametric test statistics $T_n(\hat F_n)$ having quadratic structure \cite{er97, er19} we show that separation of their normalized distances
$e_n \inf\,\{T_n(F)\,:\, F \in \Upsilon_n\,\}$ from zero is necessary and sufficient condition of uniform consistency of sets of alternatives  $\Upsilon_n$. Here  functional $T_n$ can depend on sample size $n$. Normalizing constants $e_n \to \infty$ as $n \to \infty$ are defined in a special way.

This statement has been proved in \cite{er97, er19} for nonparametric test statistics of

Cramer- von Mises tests;

  chi-squared test with number of cells increasing with growing sample size;

 tests generated quadratic forms of estimators of Fourier coefficients of signal in problem of signal detection in Gaussian white noise;

   tests generated $\mathbb{L}_2$ --norms of kernel estimators.

 We could not prove similar statement for Kolmogorov test.  Kolmogorov test
is  consistent  \cite{le} for sequences of alternatives $\mathbb{H}_n : F = F_n$, if $n^{1/2} T(F_n) \to \infty$ as  $n \to \infty$, and inconsistent if $n^{1/2} T(F_n) \to 0$ as  $n \to \infty$. At the same time Kolmogorov test  is inconsistent (asymptotically unbiased) \cite{le, massey, mason} for sets of alternatives
$$
\Theta_n(0,1,a)= \{F : n^{1/2} T(F) > a, F \in \Im \}
$$
with $a > 0$.

In Theorem \ref{tdf2} we show uniform consistency of Kolmogorov test on sets of alternatives
$$
\Upsilon_n(e_1,e_2,a)= \{F : n^{1/2} \max_{e_1 \le x \le e_2} |F(x) - F_0(x)| > a,\,
  n^{1/2}(F - F_0)\in \Im_{1} \},
$$
for arbitrary choice of $e_1, e_2$, $ 0 < e_1 < e_2 <1$ and $a > 0$. Here  $\Im_{1}$ is arbitrary subset of  differentiable  functions $G$ such that set of  functions $g(x) = \frac{dG(x)}{d\,x}$  is compact in $\mathbb{L}_2([0,1])$.

 In Theorem \ref{tdf2} we provide condition of uniform consistency of Kolmogorov test for goodness of fit testing. From Proposition \ref{tdf21} and proof of Proposition  \ref{pro2} it is easy to see that similar statement of uniform consistency of Kolmogorov type tests having weight function can be provided for problems of goodness of fit testing with parametric composite hypothesis (see \cite{durb} and Ch 5.5 \cite{wel}), nonparametric hypothesis testing on regression \cite{ha} and nonparametric hypothesis testing on symmetry (see Ch 2.2, exercise 5, \cite{wel}).  Similar results hold also for problems of hypothesis testing on homogeneity.

After that we explore consistency problem for nonparametric sets of alternatives  defined in terms of densities   (\ref{axg}).
In our setup
 sequence of sets of alternatives  $\Psi_n$ is uniformly consistent, if and only if, all sequences of simple alternatives $f_n \in \Psi_n$ are consistent.
Thus uniform consistency of sets of alternatives defined in terms of densities can be explored in the framework of  consistency of sequences of simple alternatives.

To explore consistency problem for sequences of simple alternatives we answer on the following three questions  {\it ii. - iv.}, posed in \cite{er19}.

Fix $r$, $0 <r \le 1/2$.  Suppose sets of alternatives are the following
$$
V_n = \{  f \,:\, \|f\|_\infty > c n^{-r}, \, f \in U\},
$$
where $U$ is  center-symmetric convex set,   $U \subset \mathbb{L}_\infty(0,1)$.
\vskip 0.15cm
{\it ii.} { \sl How to set   largest set $U$ such that Kolmogorov test is uniformly consistent on sets of alternatives  $V_n$ ?}
\vskip 0.15cm
 Such largest sets $U$ we call {\sl maxisets}. Exact definition of maxisets is provided in subsection \ref{subs22}.

 For $0< r < 1/2$, we show  (see Theorem \ref{tp2}) that maxisets are  Besov bodies in Besov space $\mathbb{B}^s_{\infty,\infty}$ with $s = \frac{2r}{1-2r}$, $r = \frac{s}{2 + 2s}$. Asymptotically minimax tests for Besov bodies in Besov space $\mathbb{B}^s_{\infty,\infty}$ has been found in \cite{lep}.

 If $ r = 1/2 $, we could not find sets $U$ satisfying all the requirements  in definition of maxisets defined for $ 0 < r < 1/2$. However, we show that sets of functions having a  finite fixed number of nonzero Fourier coefficients whose  absolute values do not exceed a certain constant satisfy similar requirements. In further statements of this section, and therefore in the corresponding Theorems, the maxisets can be replaced with such sets if $r=1/2$.
\vskip 0.2cm
{\sl iii. How to describe all consistent and inconsistent sequences of simple alternatives having fixed rate of convergence to hypothesis in $\mathbb{L}_\infty$ -- norm.}
\vskip 0.2cm
   We explore this problem as a problem of testing simple hypothesis
   \begin{equation}\label{i29}
\mathbb{\bar H}_0\,:\, f(x)  = 0, \quad x\in [0,1]
\end{equation}
   versus sequence of simple alternatives
 \begin{equation}\label{i30}
\mathbb{\bar H}_n\,:\, f = f_n, \qquad cn^{-r} \le \| f_n\|_\infty \le C n^{-r}, \qquad  0 < r \le 1/2,
\end{equation}
where $0 < c < C <\infty$.

In Theorem \ref{tp3} we show that functions
     $f_n$, $cn^{-r} \le \| f_n\|_\infty \le C n^{-r}$,  of any consistent sequence of alternatives  admit representation as sums of functions   $f_{1n}$, $c_1n^{-r} \le \| f_{1n}\|_\infty \le C_1 n^{-r}$, belonging to some maxiset and  functions $f_n - f_{1n}$ having the same signs of Fourier coefficients as functions $f_{1n}$ in wavelet basis defined a special way. Moreover, for any  $\varepsilon > 0$ there are maxiset and functions   $f_{1n}$, $c_1n^{-r} \le \| f_{1n}\|_\infty \le C_1 n^{-r}$, from the maxiset such that difference of type II error probabilities for alternatives  $f_n$ and $f_{1n}$ does not exceed $\varepsilon$   and functions $f_n$, $f_{1n}$, $f_n - f_{1n}$  have the same signs in Fourier coefficients in this special wavelet basis.
\vskip 0.2cm
{\sl iv. What can we say about interconnection of consistent and inconsistent sequences of alternatives having fixed rate of convergence to hypothesis?}
\vskip 0.2cm
 We show that type II error probabilities of alternatives formed by sums of functions of  consistent and inconsistent sequences of simple alternatives have the same asymptotic as type  II error probabilities of consistent sequence (see Theorem \ref{tp4}). We find analytic characterization of consistent sequences of alternatives $f_n$, $c n^{-r} < \|f_n\|_\infty < C n^{-r}$, such that these sequences do not contain as additive components of inconsistent sequence of alternatives with the same rate of convergence to hypothesis. We explore properties of such sequences.
We call such consistent sequences of alternatives {\sl pure consistent sequences}. These properties for Kolmogorov test are akin to properties of nonparametric test statistics having quadratic structure \cite{er19}.

As mentioned, we show that maxisets for Kolmogorov test are Besov bodies in Besov space $\mathbb{B}^s_{\infty,\infty}$.

Define Besov bodies in the following form
$$
\mathbb{B}^s_{\infty,\infty}(P_0) = \Bigl\{f : f = \sum_{k=1}^\infty \sum_{j=1}^{2^k} \theta_{kj} \phi_{kj},\,\, \sup_k 2^{(s+1/2)k}\sup_{1 \le j \le 2^k} |\theta_{kj}| \le P_0,\, \theta_{kj} \in \mathbb{R}^1\Bigr\},
$$
where $\phi_{kj}(x) = \phi(2^k x - j)$, $x \in (0,1)$,  is wavelet basis and $P_0 > 0$. We suppose that mother periodic wavelet  $\phi$ has smoothness more than $[s] +2$ and has bounded support $(a_1,a_2)$, moreover
$\int_{a_1}^{a_2} x \phi(x + c)\,dx = 0$, with $c = (a_1 + a_2)/2$. Here $m = [s]$ denote a whole part of $s$.

Note  (see \cite{jo}) that, for non-integer $s$ with $m = [s]$, Besov space $\mathbb{B}^s_{\infty,\infty}$ is set of functions $f$ such that there exist $\frac{d^m f(x)}{d x^m}$ for all $x \in (0,1)$ with $m = [s]$ and $\frac{d^m f(x)}{d x^m}$ satisfies Hoelder conditions with order $s -m$.  If $s$ is whole, then functions $\frac{d^{s-1} f(x)}{d x^{s-1}}$ belong to Zigmund class.

Paper is organized as follows. In section \ref{def} main definitions are provided. In section \ref{condis}  we state Theorems about uniform consistency of sets of alternatives defined in terms of distribution functions. In section \ref{conden}; for $0 < r < 1/2$, Theorems about consistency of sequences of simple alternatives defined in terms of densities are provided. Similar results for $r =1/2$ are provided  in section \ref{sos1}. Proof of Theorems one can find in Appendix.

We use letters $c$ and $C$ as a generic notation for positive constants. For any two sequences of positive real numbers $a_n$ and $b_n$,  $a_n \asymp b_n$ implies $c < a_n/b_n < C$ for all $n$ and  $a_n = o(b_n)$, $a_n =  O(b_n)$ imply $a_n/b_n \to 0$ as $n \to \infty$ and $a_n < C b_n$ for all $n$ respectively. Denote $[a]$ a whole part of any real number $a$.
\section{Main definitions \label{def}}

\subsection{Consistency \label{subs21}}
We begin with  the problem of testing hypothesis on a distribution function (\ref{i1}) versus alternatives (\ref{ax2}).

  For Kolmogorov test $K_n = K_n(X_1,\ldots,X_n)$ denote $\alpha(K_n)$ its type I error probability and $\beta(K_n,F_n)$ its type II error probability for alternative $F_n \in \Im$.

For sets of alternatives $\Upsilon_n$, $\Upsilon_n \subset \Im$, denote
$$
\beta(K_n,\Upsilon_n) = \sup_{F \in \Upsilon_n} \beta(K_n,F).
$$
We say that sequence of sets of alternatives $\Upsilon_n$ is $\alpha$--uniformly consistent for fixed type I error probability  $\alpha$, $0 < \alpha < 1$, if, for Kolmogorov test $K_n$, $\alpha(K_n) =  \alpha + o(1)$ as $n \to \infty$, there holds
$$
\limsup_{n \to \infty} \beta(K_n,\Upsilon_n) < 1 -  \alpha.
$$
In what follows, we  shall call such sequences of sets of alternatives $\Upsilon_n$  $\alpha$-uniformly consistent.

If sequence of sets of alternatives  $\Upsilon_n$ is $\alpha$-uniformly consistent  for all $\alpha$, $0 < \alpha <1$, we say that sequence of sets of alternatives  $\Upsilon_n$ is uniformly consistent.

For problem of testing hypothesis on  a density (\ref{i29}) versus alternatives (\ref{i30}) we also implement the notation  $\beta(K_n,f_n)$ for type II error probability of alternative  $f_n$.

We say that sequence of simple alternatives $f_n$ is $\alpha$-consistent, $0< \alpha < 1$, if for tests $K_n$, $\alpha(K_n) =  \alpha + o(1)$ as $n \to \infty$, we have
$$
\limsup_{n \to \infty} \beta(K_n,f_n) < 1 -  \alpha
$$
If sequence of simple alternatives  $f_n$ is $\alpha$-consistent for all $0< \alpha < 1$, then we say that this sequence is consistent. If sequence of simple alternatives $f_n$ is not $\alpha$-consistent for all $\alpha$, $0 < \alpha <1$, then we say that this sequence is inconsistent.

For sequences of simple alternatives $F_n \in \Im$ we  shall implement the same terminology.

\subsection{Definition of maxiset and maxispace \label{subs22}}

We explore problem of hypothesis  testing on density (\ref{i29}) versus simple alternatives (\ref{i30}).

Functions arising in  reasoning should be densities. To guarantee  this property we introduce the following condition.

\noindent{A}. For sequence of functions $f_n= \sum_{j=1}^\infty \sum_{i=1}^{2^j} \theta_{nji} \phi_{ji}$ there is $l_0$, such that for any $l > l_0$ functions
$$
1 + \sum_{k=1}^l \sum_{j=1}^{2^k} \theta_{nkj} \phi_{kj},
\quad  \quad
1 + \sum_{k=l}^\infty \sum_{j=1}^{2^k} \theta_{nkj} \phi_{kj}
$$
are densities.

If all $f_n = f$ for all $n$,  we say that function $f$  satisfies $A$.

Let $\Xi$, $\Xi \subset \mathbb{L}_\infty(0,1)$,  be Banach  space with  norm $\|\cdot\|_\Xi$. Denote $ U=\{f:\, \|f\|_\Xi \le r,\, f \in \Xi\}$ ball in $\Xi$ having radius  $r> 0$.

Definition of maxiset $U$  and maxispace  $\Xi$ is akin to \cite{er19}.  The definition is provided in terms of   $\mathbb{L}_\infty$-norm.

Define subspaces $\Pi_k$, $1 \le k < \infty$, by induction.

Put $d_1= \max\{\|f\|_\infty,\, f \in U\}$. Define function  $e_1 \in U$ such that  $\|e_1\|_\infty= d_1.$ Denote $\Pi_1$ linear subspace generated  $e_1$.

For $i=2,3,\ldots$, denote
$d_i = \max\{\rho(f,\Pi_{i-1}), f \in U \}$, where $\rho(f,\Pi_{i-1})=\min\{\|f-g\|_\infty, g \in \Pi_{i-1} \}$. Define function $e_i \in U$ such that $\rho(e_i,\Pi_{i-1}) = d_i$.
Denote $\Pi_i$ linear subspace generated functions $e_1,\ldots,e_i$.

Let $\tb_1,\tb_2,\ldots$ be orthonormal basis in $\mathbb{L}_2(0,1)$ and let functions $\tb_i$, $i=1,2,\ldots,$ be bounded. For each $i$, $i=1,2,\ldots,$ denote $\Gamma_i$-linear subspace generated functions $\tb_1,  \ldots, \tb_i$.

For each $i$, $i=1,2,\ldots,$ denote $a_i = \sup\{\rho(f,\Gamma_i)\,:\, f \in U\}$.

Suppose that $a_i \asymp d_i$ as $i \to \infty$.

For any bounded function $f$, denote $b_i(f) = \inf\,\{\|f - g\|\,|\, g \in \Gamma_i\}$.

For any function $f \in \mathbb{L}_\infty(0,1)$, define function
$f_{\Gamma_i} \in \Gamma_i$, $i=1,2,\ldots,$ such that $\| f - f_{\Gamma_i}\| = b_i(f)$. Put $\tilde f_i = f - f_{\Gamma_i}$.

Ball $U$ is called {\sl maxiset} for test statistics  $T$ of Kolmogorov test and functional space  $\Xi$ is  called {\sl maxispace} for basis $\tb_1,\tb_2,\ldots$, if following two conditions are satisfied:
\vskip 0.2cm
{\sl i.} for any subsequence of simple alternatives $f_{n_j} \in U$, $cn_j^{-r} < \|f_{n_j}\|_\infty < Cn_j^{-r}$, $n_j \to \infty$ as $j \to \infty$, there is $\alpha_0$, $0 < \alpha , \alpha_0$ such that this subsequence  is $\alpha$-- consistent  for subsequences of samples $X_1,\ldots,X_{n_j}$ respectively.
\vskip 0.2cm
{\sl ii.}  for any  $f \in \mathbb{L}_\infty(0,1)$, $f \notin \Xi$, and $f$ satisfies A, there are sequences $i_n$, $j_{i_n}$, $i_n \to \infty$, $j_{i_n} \to \infty$ as $n \to \infty$   such that $c j_{i_n}^{-r}<\|\tilde f_{i_n}\|_\infty < C j_{i_n}^{-r}$ and subsequence of alternatives $\tilde f_{i_n}$  is inconsistent for subsequence of samples $X_1, \ldots, X_{j_{i_n}}$.

\section{Consistency of  alternatives defined in terms of distribution functions \label{condis}}

\begin{theorem} \label{tdf1}
For each  $a >0$ there is $\alpha_0 = \alpha(a)$, $0 < \alpha_0 <1$, such that sets of alternatives $\Upsilon_n(0,1,a)$ are $\alpha$-uniformly consistent for $\alpha_0<\alpha <1$
\end{theorem}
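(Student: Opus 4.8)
The plan is to pass to the Brownian-bridge limit and to identify $\alpha_0(a)$ as the level at which the critical value of the Kolmogorov statistic drops below $a$. Write $k_\alpha$ for the upper $\alpha$-quantile of $\sup_{t\in[0,1]}|B(t)|$, where $B$ is a standard Brownian bridge, so that $P(\sup_t|B(t)|>k_\alpha)=\alpha$ and $\alpha\mapsto k_\alpha$ is continuous and strictly decreasing. I set $\alpha_0(a)=P(\sup_t|B(t)|>a)$, equivalently $k_{\alpha_0}=a$; then $0<\alpha_0<1$ and, crucially, $\alpha>\alpha_0\iff k_\alpha<a$. For $F\in\Upsilon_n(0,1,a)$ put $\Delta_n=n^{1/2}(F-F_0)$, so that $\sup_x|\Delta_n(x)|>a$ and, because $F\in\Im_1$ is continuous, $n^{1/2}(\hat F_n-F_0)=G_n+\Delta_n$ with $G_n=n^{1/2}(\hat F_n-F)\Rightarrow B$ by Donsker's theorem, the convergence being stable under $F\to F_0$. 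The Kolmogorov statistic is then $\|G_n+\Delta_n\|_\infty$, and its critical value $c_n\to k_\alpha$. Since enlarging the signal only increases power, Anderson's lemma giving that $t\mapsto P(\|G_n+t\Delta_n\|_\infty\le c_n)$ is non-increasing for $t\ge0$, the worst case may be taken with $\sup_x|\Delta_n|$ arbitrarily close to $a$, hence with $\|\Delta_n\|_\infty$ bounded.

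I would argue by contradiction. If $\Upsilon_n(0,1,a)$ fails to be $\alpha$-uniformly consistent for some $\alpha\in(\alpha_0,1)$, choose alternatives $F_k\in\Upsilon_{n_k}$ with $\beta(K_{n_k},F_k)\to\ell\ge1-\alpha$, let $x_{0,k}$ satisfy $\Delta_{n_k}(x_{0,k})>a$ (the opposite sign being symmetric), and pass to a subsequence along which $x_{0,k}\to x^\ast$ and $\sigma_{0,k}^2:=F_k(x_{0,k})(1-F_k(x_{0,k}))\to x^\ast(1-x^\ast)$. The proof splits according to the limiting variance at the peak. If $x^\ast\in\{0,1\}$ then $\sigma_{0,k}\to0$; bounding the non-rejection probability by the single coordinate $x_{0,k}$ gives $\beta(K_{n_k},F_k)\le P(G_{n_k}(x_{0,k})\le c_{n_k}-a)$ with $c_{n_k}-a\to k_\alpha-a<0$, and since $G_{n_k}(x_{0,k})$ is centred with variance $\sigma_{0,k}^2\to0$, Chebyshev's inequality forces $\beta\to0$, contradicting $\ell\ge1-\alpha>0$.

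The substantive case is an interior peak, $x^\ast\in(0,1)$, where a bump of height exceeding $a>k_\alpha$ cannot be cancelled by the bridge on the event that the bridge itself stays within the critical tube. For a localized signal the limiting statistic is $\max(\sup_t|B(t)|,\,|B(x^\ast)+a|)$, so the power tends to $1-P(\sup_t|B(t)|\le k_\alpha,\ B(x^\ast)\le-(a-k_\alpha))=\alpha+P(\sup_t|B(t)|\le k_\alpha,\ B(x^\ast)>-(a-k_\alpha))$; because $-(a-k_\alpha)<0$ this excess is at least $P(\sup_t|B(t)|\le k_\alpha,\ B(x^\ast)\ge0)\ge\tfrac12(1-\alpha)>0$ by the symmetry $B\stackrel{d}{=}-B$. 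Thus the worst-case power exceeds $\alpha$ by a fixed amount, contradicting $\ell\ge1-\alpha$. The inequality $k_\alpha<a$, i.e. $\alpha>\alpha_0$, is exactly what makes $-(a-k_\alpha)$ negative and the excess strictly positive; for $\alpha<\alpha_0$ one can push the peak toward the boundary and the excess collapses, which is the source of the asymptotic unbiasedness noted for $\Theta_n(0,1,a)$.

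The main obstacle is to make the interior estimate uniform over all admissible signal shapes, not merely over spikes. A general $\Delta_n$ with $\|\Delta_n\|_\infty\approx a$ need not converge in sup-norm, since its mass may concentrate into a shrinking spike, so one cannot simply pass to a limiting drift; and away from its peak $\Delta_n$ interacts with the whole bridge, so the clean splitting into ``bulk plus one coordinate'' is not immediately available. My plan is to control the bulk by Anderson's lemma, which already yields $\beta\le1-\alpha$, hence power $\ge\alpha$ for every $\Delta_n$, and then to upgrade this to a strict uniform gap by combining it with the half-space constraint $\{G_n(x_{0,k})\le c_n-a\}$ forced at the peak, reducing the extremal signal to a single interior spike at the point of maximal variance through a convexity and symmetrization argument. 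Verifying that this reduction is legitimate, uniformly in $n$ and in the shape of $\Delta_n$, is the step I expect to require the most care.
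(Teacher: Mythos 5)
Your proof is complete only in two special situations: when the peak location tends to the boundary (there your single-coordinate Chebyshev bound is valid for signals of arbitrary shape), and when the interior signal is an exact localized spike (there the limiting statistic $\max(\sup_t|B(t)|,\,|B(x^\ast)+a|)$ and the symmetry factor $\tfrac12$ give $\beta\le\tfrac12(1-\alpha)$). The general interior case, which you yourself flag as the crux, is genuinely missing, and it is not a technicality. For an arbitrary $\Delta_n$ the acceptance event is the shifted ball $\{\sup_x|G_n(x)+\Delta_n(x)|\le c_n\}$; it is not contained in $\{\sup_x|G_n(x)|\le c_n\}\cap\{G_n(x_{0,k})\le c_n-a\}$, since away from the peak $\Delta_n$ may cancel $G_n$, so your formula for the limiting statistic does not apply to non-spike shapes, and a sequence of shapes need not converge in sup-norm to anything. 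Anderson's lemma gives only the non-strict $\beta\le 1-\alpha+o(1)$, and the reduction of an arbitrary admissible shape to a single interior spike by ``convexity and symmetrization'' is precisely the assertion that would have to be proved; nothing in the proposal proves it. In the paper, the strict, shape-uniform sharpening of Anderson's inequality is the content of Propositions \ref{tdf21} and \ref{pro2} (Zalgaller's strictness remark plus a compactness/limiting argument over sequences of shapes), and that machinery is what the paper builds for Theorem \ref{tdf2}; your closing paragraph is, in effect, a promise to redevelop it.

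The paper's own proof of Theorem \ref{tdf1} is completely different and makes the uniformity-in-shape problem disappear: it is a short consequence of the Dvoretzky--Kiefer--Wolfowitz inequality. If the test accepts and $\sqrt n\,T(F)>a$, then by the triangle inequality $\sqrt n\,\sup_x|\hat F_n(x)-F(x)|\ge\sqrt n\,T(F)-\sqrt n\,T(\hat F_n)>a-c_n(\alpha)$, hence $\beta(K_n,F)\le 2\exp\{-2(a-c_n(\alpha))^2\}$ for every $F\in\Upsilon_n(0,1,a)$, with no Gaussian limit, no subsequence extraction, and no dependence on the shape of $F-F_0$; one then chooses $\alpha_0(a)$ so that $2\exp\{-2(a-k_\alpha)^2\}<1-\alpha$ on the range considered. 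Note that the theorem asserts only the existence of some $\alpha_0(a)$, not your sharp value $\mathbf{P}(\sup_t|B(t)|>a)$; insisting on the sharp threshold is exactly what forces you into the extremal-shape problem. One caveat cuts both ways: the DKW bound is an absolute constant in the limit $\alpha\to1$ while $1-\alpha\to0$, so the ``straightforward'' argument verifies the defining inequality only for $\alpha$ bounded away from $1$ (a point the paper glosses over), whereas bounds multiplicative in $1-\alpha$, like your factor $\tfrac12$, are what is needed near $\alpha=1$ --- but that is precisely where your missing uniformity step sits.
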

\begin{theorem} \label{tdf4} Sequence of alternatives $F_n \in \Im$ is inconsistent if  there holds
\begin{equation*}
\lim_{n \to \infty} \sqrt{n}\, T(F_n) = 0.
\end{equation*}
\end{theorem}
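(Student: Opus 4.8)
The plan is to show that, for every fixed $\alpha\in(0,1)$, the level-$\alpha$ Kolmogorov test has asymptotic power at most $\alpha$ against $F_n$, so that its type II error probability satisfies $\limsup_{n}\beta(K_n,F_n)\ge 1-\alpha$; since this rules out $\alpha$-consistency for each such $\alpha$, the sequence $F_n$ is inconsistent. Write the test as rejection of $\mathbb{H}_0$ when $\sqrt{n}\,T(\hat F_n)>t_{n,\alpha}$, the critical value $t_{n,\alpha}$ being chosen so that $\alpha(K_n)=\alpha+o(1)$. Because $F_0$ is continuous, the classical Kolmogorov limit theorem gives $\sqrt{n}\,T(\hat F_n)\Rightarrow\mathcal{K}$ under $\mathbb{H}_0$, where $\mathcal{K}$ is the supremum of the absolute value of a Brownian bridge; as the law of $\mathcal{K}$ has a continuous, strictly increasing distribution function, the quantiles converge, $t_{n,\alpha}\to t_\alpha$ with $P(\mathcal{K}>t_\alpha)=\alpha$.

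The key decomposition is the triangle inequality $\sqrt{n}\,T(\hat F_n)=\sqrt{n}\,\|\hat F_n-F_0\|_\infty\le \sqrt{n}\,\|\hat F_n-F_n\|_\infty+\sqrt{n}\,\|F_n-F_0\|_\infty$, whose last term equals $\sqrt{n}\,T(F_n)\to 0$ by hypothesis. It then remains to control $W_n:=\sqrt{n}\,\|\hat F_n-F_n\|_\infty$, the Kolmogorov statistic evaluated at the true alternative distribution. I would reduce this to the uniform case by the quantile coupling: writing $X_i=F_n^{-1}(U_i)$ with $U_i$ i.i.d. uniform on $[0,1]$ and $\hat G_n$ the empirical c.d.f. of $U_1,\dots,U_n$, one has $\hat F_n(x)=\hat G_n(F_n(x))$ via the relation $F_n^{-1}(u)\le x\iff u\le F_n(x)$, whence $W_n\le \sqrt{n}\,\sup_{t\in[0,1]}|\hat G_n(t)-t|=:D_n$ almost surely under the coupling, and $D_n\Rightarrow\mathcal{K}$ with a law not depending on $F_n$. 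This bound holds for every $F_n\in\Im$, continuous or not, and is uniform in $n$, which is exactly what is needed since $F_n$ varies with the sample size.

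Combining the two estimates, the power satisfies $P_{F_n}\bigl(\sqrt{n}\,T(\hat F_n)>t_{n,\alpha}\bigr)\le P\bigl(D_n>t_{n,\alpha}-\sqrt{n}\,T(F_n)\bigr)$. Since $t_{n,\alpha}-\sqrt{n}\,T(F_n)\to t_\alpha$ and $D_n\Rightarrow\mathcal{K}$, the portmanteau theorem together with continuity of the distribution function of $\mathcal{K}$ yields $\limsup_{n}P_{F_n}\bigl(\sqrt{n}\,T(\hat F_n)>t_{n,\alpha}\bigr)\le P(\mathcal{K}>t_\alpha)=\alpha$. Hence $\limsup_{n}\beta(K_n,F_n)=1-\liminf_{n}\bigl(\text{power}\bigr)\ge 1-\alpha$, so $F_n$ is not $\alpha$-consistent; as $\alpha\in(0,1)$ was arbitrary, $F_n$ is inconsistent.

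I expect the main obstacle to be the coupling-domination step, that is, bounding $W_n$ by the uniform statistic $D_n$ uniformly over the varying and possibly discontinuous $F_n$. Once that almost-sure bound is secured, the remainder is a soft weak-convergence argument, and the only additional care required is to invoke the continuity of the law of $\mathcal{K}$ when passing from the convergence $D_n\Rightarrow\mathcal{K}$ to the limit of the tail probabilities at the moving threshold $t_{n,\alpha}-\sqrt{n}\,T(F_n)$.
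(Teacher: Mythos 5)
Your proof is correct. Note that the paper does not actually write out a proof of Theorem \ref{tdf4}: it asserts that the result follows ``straightforwardly'' from the Dvoretzky--Kiefer--Wolfowitz inequality and otherwise refers to \cite{le}, Ch.~14.2, Th.~14.2.3, and \cite{gine}. Your argument --- the triangle inequality $\sqrt{n}\,T(\hat F_n)\le \sqrt{n}\,\|\hat F_n-F_n\|_\infty+\sqrt{n}\,T(F_n)$, the quantile coupling $X_i=F_n^{-1}(U_i)$ giving $W_n:=\sqrt{n}\,\|\hat F_n-F_n\|_\infty\le D_n$ almost surely (hence stochastic domination by the exact null law), convergence of the critical values $t_{n,\alpha}\to t_\alpha$, and continuity of the Kolmogorov law --- is precisely the standard argument behind that citation, so in substance you have reconstructed the proof the paper delegates to the literature. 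One genuine difference is worth recording: your coupling step is not a cosmetic substitute for DKW but is actually necessary. If one tried to run your final estimate with the DKW tail bound $P(W_n>\lambda)\le 2e^{-2\lambda^2}$ in place of the domination $P_{F_n}(W_n>\lambda)\le P(D_n>\lambda)$, the resulting limit of the power would only be bounded by $2e^{-2t_\alpha^2}$; since $\alpha=P(\mathcal{K}>t_\alpha)=2\sum_{k\ge 1}(-1)^{k-1}e^{-2k^2t_\alpha^2}<2e^{-2t_\alpha^2}$, that bound strictly exceeds $\alpha$ and is too weak to contradict $\alpha$-consistency in the paper's sense ($\limsup_n\beta(K_n,F_n)<1-\alpha$). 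So the distribution-free domination by the uniform statistic (equality in law when $F_n$ is continuous, inequality in general --- which matters here because $\Im$ contains discontinuous c.d.f.'s) is the real engine of the proof, and you supply it correctly.
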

Theorems \ref{tdf1} and  \ref{tdf4} follows straightforwardly  from  Dvoretzky-Kiefer-Wolfowitz inequality (see (14.7), Ch 14.2, \cite{le} and also \cite{dv}, \cite{gine}, \cite{mas}) and proof is omitted. Theorem \ref{tdf4} is  provided in \cite{le}, Ch. 14.2, Th. 14.2.3 and \cite{gine}.

 \begin{theorem} \label{tdf2} For   any $a > 0$ and for any $e_1$, $e_2$, $0< e_1 < e_2 <1$, sequence of sets of alternatives $\Upsilon_n(e_1,e_2,a)$ is uniformly consistent.
\end{theorem}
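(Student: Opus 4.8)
The plan is to reduce the uniform statement to an analysis of the limiting Gaussian experiment and to extract, for every level $\alpha$, a level-dependent but strictly positive power excess over $\alpha$ that is uniform over the whole set $\Upsilon_n(e_1,e_2,a)$. Since the level-$\alpha$ Kolmogorov test is (asymptotically) the test that rejects when $\sqrt n\,T(\hat F_n)>t_\alpha$, where $t_\alpha$ is the upper $\alpha$-quantile of $\sup_{x}|B^0(x)|$ and $B^0$ is the Brownian bridge, it suffices to prove that for each $\alpha\in(0,1)$ there is $\delta=\delta(\alpha,e_1,e_2,a)>0$ with $\liminf_n \inf_{F\in\Upsilon_n(e_1,e_2,a)} P_F\bigl(\sqrt n\,T(\hat F_n)>t_\alpha\bigr)\ge \alpha+\delta$; this gives $\limsup_n\beta(K_n,\Upsilon_n)\le 1-\alpha-\delta<1-\alpha$ for every $\alpha$, which is exactly uniform consistency. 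I emphasise that no claim of power tending to one is made or possible: the separation $\sqrt n\max_{[e_1,e_2]}|F-F_0|\ge a$ is of fixed order, so only a fixed gap above the level can be obtained, and uniform consistency in the sense of this paper asks for precisely such a gap at every $\alpha$, including arbitrarily small $\alpha$ where Theorem \ref{tdf1} fails.

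First I would pass to a worst-case sequence. Choosing $F_n\in\Upsilon_n(e_1,e_2,a)$ with $\beta(K_n,F_n)\ge \beta(K_n,\Upsilon_n)-1/n$, pick $x_n\in[e_1,e_2]$ with $\sqrt n\,|F_n(x_n)-x_n|\ge a$ and set $h_n(x_n)=\sqrt n\,(F_n(x_n)-x_n)$, so $|h_n(x_n)|\ge a$. If $|h_n(x_n)|\to\infty$ along a subsequence, the marginal deviation at $x_n$ alone forces the rejection probability to one, so it suffices to treat the binding case $a\le|h_n(x_n)|\le C$; then $F_n(x_n)=x_n+O(n^{-1/2})$ and, after a further subsequence, $x_n\to x_0\in[e_1,e_2]$. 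Writing $G_n(x)=\sqrt n(\hat F_n(x)-F_n(x))$ for the $F_n$-empirical process, Donsker's theorem gives $G_n\Rightarrow B^0$ in $D[0,1]$ and, at the marginal $x_n$, $G_n(x_n)\Rightarrow N(0,\sigma_0^2)$ with $\sigma_0^2=x_0(1-x_0)$. The decisive point is that because $x_0\in[e_1,e_2]\subset(0,1)$, the variance is bounded below, $\sigma_0^2\ge\delta_0:=\min\{e_1(1-e_1),e_2(1-e_2)\}>0$, uniformly over the worst-case sequence.

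The power bound then has two ingredients. The baseline is asymptotic unbiasedness: applying Anderson's lemma to the symmetric convex sup-norm ball $K=\{g:\|g\|_\infty\le t_\alpha\}$ and the deterministic shift $h$ of the limiting process shows $P(\sup_x|B^0(x)+h(x)|>t_\alpha)\ge P(\sup_x|B^0(x)|>t_\alpha)=\alpha$ for every shift, so the limiting power is never below $\alpha$. The extra gap comes from the single coordinate $x_0$: since $\sqrt n(\hat F_n(x_n)-x_n)=G_n(x_n)+h_n(x_n)$ with $h_n(x_n)\ge a$ (the case $\le -a$ being symmetric via $-B^0\overset{d}{=}B^0$), the limiting event $\{B^0(x_0)\in(t_\alpha-a,\,t_\alpha]\}\cap\{\sup_x|B^0(x)|\le t_\alpha\}$ already forces $B^0(x_0)+a>t_\alpha$ and hence rejection, while being disjoint from the baseline exceedance $\{\sup_x|B^0|>t_\alpha\}$. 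Because $\sigma_0^2\ge\delta_0>0$, the window $(t_\alpha-a,t_\alpha]$ is charged by $B^0(x_0)$ with probability bounded below, so this event has probability at least some $\delta(\alpha)>0$ for every $\alpha\in(0,1)$, uniformly in $x_0\in[e_1,e_2]$. This is exactly where the restriction to $[e_1,e_2]$ matters: for the full interval of Theorem \ref{tdf1} the adversary drives $x_0\to0$ or $1$, whence $\sigma_0^2\to0$ and the window is charged only when $t_\alpha<a$, reproducing the threshold $\alpha_0(a)$ there.

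The hard part will be combining these two contributions into a single bound $\alpha+\delta$ valid for arbitrary, not merely spike-shaped, alternatives. For a narrow spike $h_n\approx a\cdot\mathbf 1_{\{x_0\}}$ the two events above are genuinely disjoint and add to $\alpha+\delta$, but for a general $h$ the shift may cancel the fluctuations of $B^0$ at other points, so the baseline exceedance event need not lie inside the rejection region and the naive addition fails. I would resolve this either by a quantitative form of Anderson's lemma — showing $P(\sup_x|B^0+h|\le t_\alpha)\le P(\sup_x|B^0|\le t_\alpha)-\delta$ whenever $|h(x_0)|\ge a$ at some $x_0\in[e_1,e_2]$ — or by establishing that the single-coordinate spike is the least favourable shape, using that adding any further deterministic shift off $x_0$ only stochastically enlarges $\sup_{x\ne x_0}|B^0+h|$ (again by Anderson) and controlling the dependence between this supremum and the value $B^0(x_0)$. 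A final routine step transfers the limiting inequality back to finite $n$ through the weak convergence of $G_n$ and the continuity of the supremum functional, the possible non-convergence of $h_n$ (escaping spikes) being absorbed by the lower bound being uniform in $x_0\in[e_1,e_2]$ rather than relying on convergence of $h_n$ itself.
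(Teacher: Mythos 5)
Your reduction to the Gaussian limit experiment and your diagnosis of what is needed are both correct, and they match the paper's strategy: uniform consistency is equivalent to a level-dependent but uniform gap $\delta(\alpha)>0$ between the limiting power and $\alpha$, and such a gap amounts to a \emph{uniform, strict} version of Anderson's lemma for the Brownian bridge, namely $\sup_{u}\mathbf{P}(\sup_t|b(t)+u(t)|< t_\alpha)<\mathbf{P}(\sup_t|b(t)|< t_\alpha)$, the supremum running over all admissible shifts with $\max_{[e_1,e_2]}|u|\ge a$. The problem is that your proposal stops exactly at this point. You correctly observe that the naive decomposition fails (under a general shift the event $\{\sup_t|B^0(t)|>t_\alpha\}$ need not lie inside the rejection region, so the probability of your window event cannot simply be added to $\alpha$), and you then offer two possible ways out --- a ``quantitative form of Anderson's lemma'' or a ``least favourable spike'' argument --- without proving either. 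That unproved statement is not a technical refinement; it is the entire mathematical content of the theorem. In the paper it occupies all of Propositions \ref{tdf21} and \ref{pro2}: first one proves \emph{strict} inequality in Anderson's lemma for a single shift that is nonzero on a set of positive measure (this requires the equality-case analysis of Zalgaller, extended from $\mathbb{R}^d$ to the infinite-dimensional sup-norm ball via the basis change and the approximation Lemmas \ref{ll1} and \ref{ll2}), and then one makes the inequality uniform over the whole class of shifts by a compactness/contradiction argument on the envelope functions $\bar u_k$, $\tilde u_k$ --- including the genuinely awkward degenerate case where the limiting shift $v$ is nonzero only on a Lebesgue-null set, which no ``quantitative Anderson'' statement covers and which the paper handles by the explicit event construction $\Omega_U$.

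Neither of your two suggested routes is routine. Strict inequality for a single shift already fails without an equality-case analysis (Anderson's inequality by itself admits equality, e.g.\ for sets cylindrical in the shift direction), and the ``spike is least favourable'' claim is not a consequence of Anderson's lemma: conditioning on $B^0(x_0)$ reduces it to comparing two \emph{different nonzero} shifts of the conditional process, and Anderson's lemma does not order such a pair. There is also a uniformity mismatch in your compactness step: you extract a convergent subsequence of the points $x_n$ only, but the gap you need must be uniform over the shift \emph{functions} $h_n$, not over $x_0$; this is precisely why Proposition \ref{pro2} runs its compactness argument on the shifts themselves. So the skeleton of your argument is the paper's, but the proof is missing its load-bearing part.
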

As has been shown by Massey  (see \cite{mas} and also Ch. 14, \cite{le})
there exist $\alpha_1$, $0 < \alpha_1 <1$, such that for Kolmogorov tests $K_n$, $\alpha(K_n) < \alpha_1$, $n > n_0(\alpha_0)$, there holds
\begin{equation*}
\limsup_{n \to \infty} \beta(K_n,F_n) = 1 -  \alpha.
\end{equation*}
Theorem \ref{tdf3} given below  is a version of Theorem 3 in  \cite{chib} (see also Th. 1 Ch.4.2 \cite{wel}). Proof of Theorem \ref{tdf3} is akin to proof of Theorem  1, Ch.4.2, \cite{wel} and is omitted.
\begin{theorem} \label{tdf3} Let $F_n(t)$, $t \in [0,1]$ be sequence of alternatives such that
$  \sqrt{n} T(F_n) < C$. Let functions $F_n$ be continuous and strictly increasing. Then there is probability space such that on this probability space, for independent identically distributed random variables
$X_1,\ldots,X_n$, having c.d.f.'s $F_n$ and Brownian bridges $b_n(t)$, $t \in [0,1]$,  there holds
\begin{equation*}
\lim_{n \to \infty} \mathbf{P}\,(\,\sup_{t \in [0,1]} |\,  \sqrt{n}\, (\hat F_n(t)\, - \,F_n(t) ) -\, b_n(t)\,| > \delta) =\, 0.
\end{equation*}
 for any $\delta > 0$.
\end{theorem}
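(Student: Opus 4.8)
The plan is to reduce everything to the uniform empirical process via the probability integral transform, then to invoke a Hungarian-type strong approximation of that process by Brownian bridges, and finally to use the near-linearity $F_n(t)\approx t$ furnished by $\sqrt n\,T(F_n)<C$ to identify the limiting process as $U_n(t)=b_n(t)$. First I would set $U_i=F_n(X_i)$. Since each $F_n$ is continuous and strictly increasing, $U_1,\ldots,U_n$ are i.i.d.\ uniform on $[0,1]$, and the monotone equivalence $X_i\le t\iff U_i\le F_n(t)$ yields $\hat F_n(t)=\hat G_n(F_n(t))$, where $\hat G_n$ is the empirical c.d.f.\ of the $U_i$. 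Writing $\alpha_n(s)=\sqrt n\,(\hat G_n(s)-s)$ for the uniform empirical process, this gives
\[
\sqrt n\,(\hat F_n(t)-F_n(t))=\alpha_n(F_n(t)),\qquad t\in[0,1].
\]

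Second, I would apply the Koml\'os--Major--Tusn\'ady (Hungarian) construction, in the spirit of Theorem~3 of \cite{chib}: on a suitable probability space carrying the $U_i$ there exist Brownian bridges $b_n$ with $\sup_{s\in[0,1]}|\alpha_n(s)-b_n(s)|=O(n^{-1/2}\log n)$ almost surely, so in particular this supremum is $o_P(1)$. Because after the quantile transform the law of the sample is uniform for \emph{every} $n$, there is no genuine triangular-array obstruction and the approximation applies for each $n$ verbatim.

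Third --- and this is the only step that uses the hypothesis $\sqrt n\,T(F_n)<C$ --- I would put $U_n(t)=b_n(t)$ and bound, by the triangle inequality,
\[
\sup_{t}\bigl|\sqrt n(\hat F_n(t)-F_n(t))-U_n(t)\bigr|\le \sup_{s}|\alpha_n(s)-b_n(s)|+\sup_{t}|b_n(F_n(t))-b_n(t)|.
\]
The first term is $o_P(1)$ by the previous step. For the second, note $\sup_t|F_n(t)-t|=T(F_n)\le Cn^{-1/2}\to 0$, so the arguments $F_n(t)$ and $t$ differ by at most $h_n=Cn^{-1/2}$; since each $b_n$ is a standard Brownian bridge, its modulus of continuity satisfies $\omega(b_n;h_n)\overset{d}{=}\omega(b;h_n)$ for a single fixed bridge $b$, and L\'evy's modulus-of-continuity theorem gives $\omega(b;h_n)\to 0$ almost surely as $h_n\to 0$. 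Hence the second term is $o_P(1)$ as well, and summing the two bounds gives the assertion for every $\delta>0$.

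The main obstacle is precisely this third step: one must upgrade the pointwise closeness of $F_n$ to $F_0$ into uniform closeness of the composed bridges $b_n\circ F_n$ and $b_n$. This is exactly where the rate $T(F_n)=O(n^{-1/2})$, rather than mere convergence $T(F_n)\to0$, is convenient, since it makes the increment scale $h_n$ explicit and reduces the whole matter to the uniform continuity of the Brownian bridge.
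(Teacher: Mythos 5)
Your proposal is correct and is essentially the argument the paper itself points to: the paper omits its own proof of Theorem \ref{tdf3}, deferring to Theorem 1, Ch.\ 4.2 of \cite{wel} (and to \cite{chib}), and that argument is exactly your three steps --- the quantile transform $\sqrt{n}(\hat F_n(t)-F_n(t))=\alpha_n(F_n(t))$, a strong approximation of the uniform empirical process by Brownian bridges (Shorack--Wellner use their Skorokhod-type special construction where you invoke the Koml\'os--Major--Tusn\'ady coupling, an immaterial difference since only $o_P(1)$ closeness is needed), and the modulus-of-continuity bound $\sup_t|b_n(F_n(t))-b_n(t)|=o_P(1)$ from $T(F_n)\le Cn^{-1/2}\to 0$. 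Your identification $U_n(t)=b_n(t)$ is also the right reading of the paper's otherwise undefined notation $U_n$, since that choice is precisely what makes the hypothesis $\sqrt{n}\,T(F_n)<C$ (indeed, merely $T(F_n)\to 0$) necessary.
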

\section{Consistency of  alternatives defined in terms of densities, $0< r < 1/2$ \label{conden}}

We explore problem of testing hypothesis on density (\ref{i29})   versus simple alternatives (\ref{i30}).

Denote $k_n = [(1/2 -  r) \log n]$

Introduce the following assumption.
\vskip 0.3cm
{\bf G.} For any  $\varepsilon > 0$ there is integer $c_\varepsilon$, such that, for all $n > n_0(\varepsilon, c_\varepsilon)$ and all $l < k_n - c_\varepsilon$ there holds
$$
n^{1/2} T(F_{nl}) < \varepsilon,
$$
where
$$
F_{nl}(x)= x + \sum_{j=1}^{l}\,\sum_{i=1}^{2^j} \theta_{nji} \psi_{ji}(x), \quad x \in [0,1].
$$
Here $\psi_{ji}(x) = \int_0^x \phi_{ji}(t) \, d\,t$, $x \in (0,1)$.

If $G$ does not valid, then consistency of Kolmogorov test holds for a faster rate of convergence of sequence of alternatives to hypothesis.

\begin{theorem} \label{tp1} Assume $G$.  There is $\alpha_0$, $0 < \alpha_0 < 1$, such that sequence of alternatives   $f_n= \sum_{j=1}^\infty \sum_{i=1}^{2^j} \theta_{nji} \phi_{ji}$, $\|f_n\|_\infty \asymp n^{-r}$ is $\alpha$-consistent for $\alpha_0< \alpha < 1$, if and only if, there is sequences $j_n$, $j_n - k_n = O(1)$ and $i_n$, $1 \le i_n \le 2^{j_n}$, such that there holds
$
 |\theta_{nj_ni_n}| > c n^{-1/4-r/2}$.

Sequence of alternatives $f_n$ is consistent, if and only if,  there are $e_1, e_2$, $0< e_1 < e_2 < 1$, such that $e_1 < i_n 2^{-j_n} < e_2$ for all $n  > n_0(e_1,e_2)$. Here $j_n - k_n = O(1)$ as $n \to \infty$.
\end{theorem}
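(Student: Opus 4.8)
The plan is to transfer the density problem (\ref{i29})--(\ref{i30}) into the distribution-function setting and then lean on Theorems \ref{tdf1}--\ref{tdf3}. Writing $F_n(x)=x+\sum_{j,i}\theta_{nji}\psi_{ji}(x)$ for the c.d.f. of the alternative, I would first verify that $\sqrt n\,T(F_n)=O(1)$: the hypothesis $\|f_n\|_\infty\asymp n^{-r}$ together with Assumption~G, which makes the scales $l<k_n-c_\varepsilon$ contribute less than $\varepsilon$ to $\sqrt n\,T(F_{nl})$, confines the non-negligible part of the signal to scales $j$ with $j-k_n=O(1)$; on these scales the primitives $\psi_{ji}(x)=2^{-j}\Phi(2^j x-i)$ have height $\asymp 2^{-j}$, so the deviation stays bounded after multiplication by $\sqrt n$. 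With $\sqrt n\,T(F_n)=O(1)$ in hand, Theorem \ref{tdf3} applies and furnishes the coupling $\sqrt n(\hat F_n(x)-x)=U_n(x)+g_n(x)+o_P(1)$ uniformly in $x$, where $U_n$ is a Brownian bridge and $g_n(x)=\sqrt n(F_n(x)-x)$. From here everything is an analysis of the sup of the bridge $U_n$ perturbed by the deterministic $g_n$.

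The heuristic organizing the exponent is a local signal-to-noise comparison at the critical scale. A single wavelet term contributes to $g_n$ a bump $\sqrt n\,\theta_{nj_ni_n}\psi_{j_ni_n}$ of height $\asymp\sqrt n\,|\theta_{nj_ni_n}|\,2^{-j_n}$ on an interval of length $\asymp 2^{-j_n}\asymp 2^{-k_n}$, whereas the increment of $U_n$ across such an interval has standard deviation $\asymp 2^{-k_n/2}$. Equating the two, with $2^{-k_n}\asymp n^{-(1/2-r)}$, gives exactly $|\theta_{nj_ni_n}|\asymp n^{-1/4-r/2}$, which explains why the critical amplitude is $n^{-1/4-r/2}$ rather than the amplitude $n^{-r}$ that merely saturates the $\mathbb{L}_\infty$-norm. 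I would use this to pin down the cell $I_n$ around $x_n=i_n2^{-j_n}$ on which the extremal bump sits and to separate the regimes above and below threshold.

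For sufficiency I would try to reduce detection of the extremal bump, for $\alpha_0<\alpha<1$, to the localized-deviation mechanism of Theorem \ref{tdf1} for $\Upsilon_n(0,1,a)$. For necessity I would show that if every coefficient with $j-k_n=O(1)$ is below $cn^{-1/4-r/2}$, then after discarding the low scales by Assumption~G and the high scales by a summation-by-parts / entropy bound for the oscillation of $U_n$ at resolution $2^{-k_n}$, the perturbation $g_n$ is too small to move the supremum, so $\sup_x|U_n+g_n|-\sup_x|U_n|\to 0$ in probability and the power tends to $\alpha$; this is the Dvoretzky--Kiefer--Wolfowitz route behind Theorem \ref{tdf4}. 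The passage from $\alpha$-consistency to full consistency is then governed by the location $x_n$: if $e_1<x_n<e_2$ for fixed $0<e_1<e_2<1$ the bump lies in the interior where the bridge is non-degenerate, and Theorem \ref{tdf2} upgrades detection to all $\alpha\in(0,1)$, whereas if $x_n$ approaches an endpoint along a subsequence the bridge is pinned there and, exactly as in Massey's asymptotic unbiasedness and Theorem \ref{tdf1}, consistency fails for small $\alpha$.

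The hard part, which I would expect to be the real content of the proof, is making the sub-constant threshold $n^{-1/4-r/2}$ interact correctly with the \emph{global} supremum statistic. A naive bound gives $\bigl|\sup_x|U_n+g_n|-\sup_x|U_n|\bigr|\le\|g_n\|_\infty$, and at the critical amplitude a single bump has $\|g_n\|_\infty\to 0$, so a crude reduction to Theorem \ref{tdf1} (whose deviations are of fixed order $a$) cannot by itself deliver the threshold; a finer moderate-deviation / excursion analysis of the bridge at resolution $2^{-k_n}$ is needed, and this is presumably what forces the restriction $\alpha_0<\alpha<1$ rather than all $\alpha$. Coupled with this is the need to control the \emph{collective} effect of the $\asymp 2^{k_n}$ scale-$k_n$ bumps that the constraint $\|f_n\|_\infty\asymp n^{-r}$ may produce: one must show that below threshold their superposition cannot conspire, through the extreme value of $U_n$ over the $2^{k_n}$ dyadic cells, to manufacture a detectable excursion, while above threshold one of them already suffices. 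A sharp chaining estimate for the modulus of continuity of $U_n$ at scale $2^{-k_n}$, combined with the reductions to Theorems \ref{tdf1} and \ref{tdf2}, is what I would expect to carry this through.
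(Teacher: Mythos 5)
Your reduction skeleton is the paper's own: pass to $F_n(x)=x+\sum\theta_{nji}\psi_{ji}(x)$, remove the low scales by Assumption G, remove the high scales by a deterministic estimate (this is (\ref{l23}) in the paper, via Lemma \ref{l1}), locate the action at scales $j-k_n=O(1)$, get sufficiency from Theorem \ref{tdf1}, necessity from the Dvoretzky--Kiefer--Wolfowitz route of Theorem \ref{tdf4}, and settle the second claim by the dichotomy interior location $+$ Theorem \ref{tdf2} versus endpoint location $+$ Massey's example. However, there is a genuine error at the quantitative heart of your argument, and it is precisely what makes you believe in a missing ``hard part'' that you then leave unproven. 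The threshold $n^{-1/4-r/2}$ in the theorem presupposes the normalization actually used in the paper's proof, $\|\phi_{ji}\|_\infty\asymp 2^{j/2}$ and $\|\psi_{ji}\|_\infty\asymp 2^{-j/2}$, not your $\psi_{ji}(x)=2^{-j}\Phi(2^jx-i)$ of height $2^{-j}$. With the correct normalization, a coefficient $|\theta_{nj_ni_n}|\asymp n^{-1/4-r/2}$ at a scale $j_n-k_n=O(1)$ produces a c.d.f. bump of height $\sqrt n\,|\theta_{nj_ni_n}|\,2^{-j_n/2}\asymp n^{1/2}\,n^{-1/4-r/2}\,n^{-1/4+r/2}\asymp 1$, i.e. a \emph{constant}-order deviation, not a vanishing one. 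Hence above threshold $\sqrt n\,T(F_n)\ge a>0$ and the ``crude'' reduction to Theorem \ref{tdf1} --- which you dismiss as insufficient --- is exactly what closes the proof; it is also the true source of the restriction $\alpha_0<\alpha<1$ (Massey's asymptotic unbiasedness for constant-height deviations that may sit near an endpoint), not any moderate-deviation effect. Below threshold $\sqrt n\,T(F_n)\to 0$ and Theorem \ref{tdf4} gives inconsistency outright. Your local signal-to-noise heuristic --- equating the bump with the bridge increment $2^{-k_n/2}$ over a cell --- lands on the correct exponent only because two mistakes cancel: you use the unnormalized height $2^{-j}$, and you compare against the wrong benchmark. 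For a sup-statistic whose critical value is of constant order, a localized bump is detectable if and only if its height is of constant order; a bump of height $\asymp 2^{-k_n/2}\to0$ shifts $\sup_x|U_n+g_n|$ by $o_P(1)$ and changes the power only by $o(1)$, so at your reading of the threshold the sequence would in fact be \emph{inconsistent}.

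Consequently the final paragraph of your proposal chases a phantom. No excursion or moderate-deviation analysis of the bridge at resolution $2^{-k_n}$ is needed, and the ``collective effect'' of the $\asymp 2^{k_n}$ cells is handled trivially: the $\psi_{ji}$ at a given scale have boundedly overlapping supports (since $\int\phi=0$, each $\psi_{ji}$ vanishes off the support of $\phi_{ji}$), so at every point only finitely many terms from the finitely many scales $k_n-C_1\le j\le k_n+C$ contribute, and Lemma \ref{l1} rules out cancellation, giving $\|F_{n\delta}\|_\infty\asymp\max_{j,i}|\theta_{nji}|2^{-j/2}$; the sub-threshold case is then controlled by the trivial bound $\sup_x|U_n+g_n|\le\sup_x|U_n|+\|g_n\|_\infty$ with $\|g_n\|_\infty\to0$, not by extreme-value considerations. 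Since you explicitly leave the (mis-diagnosed) key step to ``a finer moderate-deviation / excursion analysis'' that you expect would carry it through, the proposal as written does not prove the theorem, even though its architecture coincides with the paper's.
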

Theorem \ref{tp101} can be considered as a Corollary of Theorem  \ref{tp1}.
\begin{theorem} \label{tp101} Assume $G$.  Sequence of alternatives $f_n= \sum_{j=1}^\infty \sum_{i=1}^{2^j} \theta_{nji} \phi_{ji}$, $\|f_n\|_\infty \asymp n^{-r}$, is inconsistent if there is  sequence $j_n$, $j_n - k_n \to \infty$, such that $ |\theta_{nj_ni_n}| = o(n^{-1/4-r/2})$ for all sequences $ l_n \le j_n$ and $1  \le i_n \le 2^{l_n}$.
\end{theorem}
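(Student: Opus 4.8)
The plan is to show that the hypotheses force $\sqrt{n}\,T(F_n)\to 0$, where $F_n(x)=x+\int_0^x f_n(t)\,dt$, and then to read off inconsistency from Theorem~\ref{tdf4}. (The problems (\ref{i29})--(\ref{i30}) and (\ref{i1})--(\ref{ax2}) use the same statistic, so $\beta(K_n,f_n)=\beta(K_n,F_n)$, and Assumption~A guarantees $F_n\in\Im$.) I read the hypothesis as $\max_{j\le j_n}M_{nj}=o(n^{-1/4-r/2})$, where $M_{nj}=\max_{1\le i\le 2^j}|\theta_{nji}|$. Since $F_n(x)-F_0(x)=\sum_j\sum_i\theta_{nji}\psi_{ji}(x)$ with $\psi_{ji}(x)=\int_0^x\phi_{ji}$, and $\phi$ is a compactly supported mother wavelet with $\int\phi=0$, each $\psi_{ji}$ is supported in an interval of length $O(2^{-j})$ with $\|\psi_{ji}\|_\infty\le C2^{-j}$, and at any fixed $x$ only $O(1)$ of the functions $\psi_{ji}$, $1\le i\le 2^j$, are nonzero. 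Thus the level-$j$ block obeys $|\sum_i\theta_{nji}\psi_{ji}(x)|\le C2^{-j}M_{nj}$ for every $x$, and subadditivity of the supremum gives
\[
T(F_n)\le\sum_{j\ge1}\sup_x\Bigl|\sum_i\theta_{nji}\psi_{ji}(x)\Bigr|\le C\sum_{j\ge1}2^{-j}M_{nj}.
\]

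Next I would split this sum at the critical level, fixing $\varepsilon>0$, taking $c_\varepsilon$ from Assumption~G, and treating the bands $j<k_n-c_\varepsilon$, $k_n-c_\varepsilon\le j\le j_n$ and $j>j_n$ separately, using $2^{-k_n}\asymp n^{-1/2+r}$. The low band is the one place where the crude bound is useless, because there $\sum 2^{-j}\asymp1$; so I invoke G directly, obtaining $\sqrt n\,\sup_x|\sum_{j<k_n-c_\varepsilon}\sum_i\theta_{nji}\psi_{ji}(x)|=\sqrt n\,T(F_{n,k_n-c_\varepsilon-1})<\varepsilon$. On the middle band the hypothesis gives $M_{nj}=o(n^{-1/4-r/2})$ and $\sum_{j\ge k_n-c_\varepsilon}2^{-j}\asymp 2^{c_\varepsilon}n^{-1/2+r}$, so $\sqrt n$ times this block is $O(2^{c_\varepsilon})\,o(n^{1/2-1/4-r/2-1/2+r})=o(n^{-1/4+r/2})\to0$, the exponent being negative exactly because $r<1/2$. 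On the high band I use only the a priori size: as $\{\phi_{ji}\}$ is a wavelet basis, each coefficient is an integral of $f_n$ against the dual function, whence $M_{nj}\le C\|f_n\|_\infty\le Cn^{-r}$ for all $j$; therefore $\sum_{j>j_n}2^{-j}M_{nj}\le Cn^{-r}2^{-j_n}$ and $\sqrt n\,Cn^{-r}2^{-j_n}=C\,2^{-(j_n-k_n)}\to0$ since $j_n-k_n\to\infty$.

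Adding the three estimates yields $\limsup_n\sqrt n\,T(F_n)\le\varepsilon$ for arbitrary $\varepsilon>0$, hence $\sqrt n\,T(F_n)\to0$, and Theorem~\ref{tdf4} completes the argument. The main obstacle is precisely the coordination of the bands: the $L^\infty$ bound $\sum 2^{-j}M_{nj}$ is lossy and, applied over all $j\le j_n$ at once, diverges after multiplication by $\sqrt n$ (the small levels carry $\sum 2^{-j}\asymp1$ and $\sqrt n\,o(n^{-1/4-r/2})\to\infty$), so Assumption~G is indispensable to remove the low band; at and above the critical level the decay $2^{-k_n}\asymp n^{-1/2+r}$ is exactly what turns the calibrated smallness $o(n^{-1/4-r/2})$ on the middle band, and the crude size $n^{-r}$ together with $j_n-k_n\to\infty$ on the high band, into $o(1)$. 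I expect the supporting facts — the support and size of $\psi_{ji}$ and the uniform bound $M_{nj}\le C\|f_n\|_\infty$ — to follow routinely from the stated properties of $\phi$.
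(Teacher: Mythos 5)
Your proposal is correct in substance, and it is assembled differently from the paper. The paper gives no standalone proof of Theorem~\ref{tp101}: it declares it a corollary of Theorem~\ref{tp1}, whose proof runs through Lemma~\ref{l1}, Assumption G, the tail estimate (\ref{l23}), and the Massey/DKW endgame. You reconstruct exactly that band-splitting machinery, but only in the one direction an inconsistency claim needs (upper bounds), and you close directly with Theorem~\ref{tdf4}. This buys you two things. First, you can dispense with Lemma~\ref{l1} entirely, since the paper needs it only for the lower bounds (the $\asymp$ statements) that are irrelevant here; elementary support and size bounds on $\psi_{ji}$ suffice. Second, your route gives the full conclusion: Theorem~\ref{tp1} characterizes $\alpha$-consistency only for $\alpha_0<\alpha<1$, so quoting it literally rules out $\alpha$-consistency only in that range, whereas inconsistency in the paper's definition requires failure for every $\alpha\in(0,1)$; the DKW-based Theorem~\ref{tdf4} delivers that at once. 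So your proof is arguably the honest way to flesh out the paper's one-line ``corollary'' claim.

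One calibration slip should be fixed: you adopt the unnormalized reading $\phi_{ji}(x)=\phi(2^jx-i)$ of the paper's definition, hence $\|\psi_{ji}\|_\infty\le C2^{-j}$ and $M_{nj}\le C\|f_n\|_\infty$. But the normalization actually used throughout the paper's proofs — and the one under which the threshold $n^{-1/4-r/2}$ is the critical scale — is the $L_2$-normalized one (see ``$\|\psi_{j1}\|_\infty=c2^{-j/2}$ and $\|\phi_{j1}\|_\infty=c2^{j/2}$'' in the proof of Theorem~\ref{tp1}, and the exponent $s+1/2$ in the Besov body definition), under which $\|\psi_{ji}\|_\infty\asymp 2^{-j/2}$ and $|\theta_{nji}|\le\|f_n\|_\infty\|\phi_{ji}\|_1\le Cn^{-r}2^{-j/2}$. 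With your convention the hypothesis $o(n^{-1/4-r/2})$ sits strictly below the critical coefficient size (which would be $n^{-r}$), so you are proving a weaker statement than intended. The good news is that your three-band estimate survives the correction verbatim: the middle band becomes $\sqrt{n}\cdot o(n^{-1/4-r/2})\cdot C2^{c_\varepsilon/2}2^{-k_n/2}=2^{c_\varepsilon/2}\,o(1)$ (exactly $o(1)$, with no spare power of $n$ — your $o(n^{-1/4+r/2})$ margin is an artifact of the wrong normalization), and the high band becomes $\sqrt{n}\,Cn^{-r}\sum_{j>j_n}2^{-j}\le C2^{-(j_n-k_n)}\to 0$ as before. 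So the gap is one of bookkeeping, not of ideas; rewrite the size estimates in the normalized convention and the argument is complete.
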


\begin{theorem} \label{tp2} Besov bodies $\mathbb{B}^s_{\infty,\infty}(P_0)$  are maxisets for Kolmogorov test. Here $s = \frac{2r}{1 - 2r}$, $P_0 >0$.
\end{theorem}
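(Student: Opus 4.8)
The plan is to verify the two defining properties of a maxiset directly from the level-by-level characterizations already established in Theorems \ref{tp1} and \ref{tp101}. I would take the orthonormal basis $\tb_1,\tb_2,\dots$ to be the periodic wavelet basis $\{\phi_{kj}\}$ enumerated resolution level by resolution level, so that $\Gamma_i$ captures the first $\asymp\log_2 i$ levels and $\tilde f_i$ is the high-frequency tail of $f$. A preliminary step is the standing requirement $a_i\asymp d_i$: since the $\mathbb{L}_\infty$ Kolmogorov widths of a $\mathbb{B}^s_{\infty,\infty}$ ball are realized, up to constants, by truncating the wavelet expansion, both $a_i$ and $d_i$ decay at the rate fixed by the Besov smoothness, and their comparability is a standard wavelet-approximation fact. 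The arithmetic heart of the argument is the identity linking the Besov index to the detection threshold. Writing $\tau_k=\sup_j|\theta_{nkj}|$ and using $k_n=[(1/2-r)\log_2 n]$, so that $2^{k_n}\asymp n^{1/2-r}$, the choice $s=\tfrac{2r}{1-2r}$ gives $(s+\tfrac12)(\tfrac12-r)=\tfrac14+\tfrac r2$ and hence
\[
2^{-(s+1/2)k_n}\asymp n^{-(s+1/2)(1/2-r)}=n^{-1/4-r/2}.
\]
Thus the largest coefficient a function of $\mathbb{B}^s_{\infty,\infty}(P_0)$ may carry at the critical level $k_n$ is exactly of the order of the threshold $n^{-1/4-r/2}$ of Theorem \ref{tp1}; this single relation is what pins the index to $s=\tfrac{2r}{1-2r}$.

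For property (i) I would take any subsequence $f_{n_j}\in U$ with $c\,n_j^{-r}<\|f_{n_j}\|_\infty<C\,n_j^{-r}$. The Besov bound $\tau_k\le P_0 2^{-(s+1/2)k}$ makes the contribution of the levels $k>k_n$ to the sup-norm at most $\asymp\sum_{k>k_n}2^{-(s+1/2)k}\asymp n^{-1/4-r/2}=o(n^{-r})$, so the whole of $\|f_n\|_\infty\asymp n^{-r}$ is produced by levels $\le k_n$. A pigeonhole over the $O(\log n)$ relevant levels, together with the Besov bound, then locates a level $L\le k_n$ (no larger than the $L_{\max}$ solving $2^{-(s+1/2)L}\asymp n^{-r}$, which satisfies $L_{\max}<k_n$) carrying a coefficient of order $n^{-r}$ up to logarithmic factors. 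Such a sub-critical bump forces $n^{1/2}T(F_n)\to\infty$, so the sequence is detected and lands in the consistent branch of Theorem \ref{tp1}. The one routine point is to exclude cancellation between levels in the lower bound for $T(F_n)$, which follows from the disjointness, at each fixed level, of the primitives $\psi_{kj}=\int_0^{\,\cdot}\phi_{kj}$.

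For property (ii), let $f\notin\mathbb{B}^s_{\infty,\infty}$ satisfy A. Membership failure yields levels $m_1<m_2<\cdots$ with $2^{(s+1/2)m_t}\sup_j|\theta_{m_tj}|\to\infty$, i.e.\ coefficients overshooting the Besov decay by an unbounded factor. I would delete all levels below $m_t$, so that $\tilde f_{i_n}$ begins at level $m_t$, fix the companion sample size by the rate-matching relation $j_{i_n}^{-r}\asymp\|\tilde f_{i_n}\|_\infty$, and then show that for these choices the retained spectrum sits at or above the critical level $k_{j_{i_n}}$ while carrying no coefficient exceeding $c\,j_{i_n}^{-1/4-r/2}$ below a super-critical level. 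Theorem \ref{tp101} then certifies that $\tilde f_{i_n}$ is inconsistent for $X_1,\dots,X_{j_{i_n}}$, which is precisely (ii); the residual possibility that the significant coefficients cluster near the endpoints is disposed of by the interior-location dichotomy, exactly as in the distribution-function statement of Theorem \ref{tdf2}.

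The step I expect to be genuinely delicate is (ii): one must select the cut-offs $i_n$ and the matched sample sizes $j_{i_n}$ so that the window $c\,j_{i_n}^{-r}<\|\tilde f_{i_n}\|_\infty<C\,j_{i_n}^{-r}$ holds simultaneously with the tail falling strictly on the inconsistent (super-critical, sub-threshold) side of the critical level. Controlling where the dominant retained level lies relative to $k_{j_{i_n}}$ is exactly where the size of the Besov overshoot enters, and it is here that assumption A, and the freedom to thin the subsequence $\{m_t\}$ to levels on which the excess is large, must be used with care; getting this matching to land on the correct side of the threshold identity above is the crux of the whole theorem.
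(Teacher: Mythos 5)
Your overall plan---reduce both maxiset properties to the coefficient-threshold characterizations of Theorems \ref{tp1} and \ref{tp101} via the exponent identity $(s+1/2)(1/2-r)=1/4+r/2$---is the same as the paper's, but your execution of property \emph{(i)} breaks down at two points. First, the tail estimate is wrong: with the normalization used in the paper ($\|\phi_{kj}\|_\infty \asymp 2^{k/2}$), level $k$ contributes $\asymp 2^{k/2}\sup_i|\theta_{nki}| \le P_0 2^{-sk}$ to the sup-norm, so the levels $k>k_n$ contribute $\asymp 2^{-sk_n}\asymp n^{-r}$ in total, i.e.\ the same order as $\|f_n\|_\infty$ itself, not $o(n^{-r})$; you summed the raw coefficients $2^{-(s+1/2)k}$ and dropped the factor $2^{k/2}$. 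This is exactly why the paper truncates at $k_n+C_1$, obtaining a tail $\le C2^{-sC_1}n^{-r}=\delta n^{-r}$. Second, and fatally, your pigeonhole over the $O(\log n)$ levels below $k_n$ loses a factor $\log n$: it produces a level $L$ with $2^{L/2}\sup_i|\theta_{nLi}|\gtrsim n^{-r}/\log n$, hence (when $L$ is near $k_n$) a coefficient only $\gtrsim n^{-1/4-r/2}/\log n$, strictly below the threshold $cn^{-1/4-r/2}$ that Theorem \ref{tp1} requires. Your claim that the dominant level satisfies $L\le L_{\max}<k_n$ and therefore $n^{1/2}T(F_n)\to\infty$ is false: the extremal element of the Besov body with all its mass at the critical level, $|\theta_{nk_ni}|\asymp 2^{-(s+1/2)k_n}\asymp n^{-1/4-r/2}$, has $\|f_n\|_\infty\asymp n^{-r}$ and $n^{1/2}T(F_n)\asymp 1$; it is consistent only through the threshold criterion, not through detection, and this boundary case is precisely what pins down the maxiset. (Note also that a coefficient ``of order $n^{-r}$'' at level $L$ would contribute $2^{L/2}n^{-r}\gg n^{-r}$ to the sup-norm, contradicting $\|f_n\|_\infty\le Cn^{-r}$; coefficients can only be of order $n^{-r}2^{-L/2}$.) The paper avoids any logarithmic loss: assumption G (through the reduction in the proof of Theorem \ref{tp1}) removes the levels below $k_n-c$, the Besov decay with offset $C_1$ removes the levels above $k_n+C_1$, and on the remaining band of $O(1)$ levels Lemma \ref{l1} converts $\|\bar f_{nC}\|_\infty>C_2n^{-r}$ into a single coefficient $|\theta_{nj_ni_n}|>C_3n^{-r}2^{-j_n/2}\asymp n^{-1/4-r/2}$.

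For property \emph{(ii)} your construction (delete the levels below the overshooting level, rate-match the sample size, invoke the sub-threshold criterion) coincides with the paper's, but the step you yourself call ``the crux''---showing that the rate-matched tail actually lands on the inconsistent side---is exactly the step you leave undone, and it is the only nontrivial part. The paper carries it out in (\ref{va1})--(\ref{va6}): writing $2^{(s+1/2)j_t}|\theta_{j_tl_t}|=C_t\to\infty$ and choosing $n_t$ by $\|f_{n_t}\|_\infty\asymp n_t^{-r}$, the identity $s-s/(2r)+1=0$ yields $n_t^{1/2}2^{-j_t/2}|\theta_{j_tl_t}|\asymp C_t^{1-1/(2r)}\to 0$ because $1-1/(2r)<0$, after which inconsistency follows from the Dvoretzky--Kiefer--Wolfowitz inequality (Theorem \ref{tdf4}). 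Without this computation, an appeal to Theorem \ref{tp101} presupposes exactly what must be verified, namely that the unbounded Besov overshoot forces the dominant retained level to be super-critical and sub-threshold for the matched sample size. Finally, the endpoint/interior dichotomy you import from Theorem \ref{tdf2} is not needed here: the location of the coefficients matters only for upgrading $\alpha$-consistency to consistency in the positive direction, never for proving inconsistency.
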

We say that  functions $f_1 = \sum_{j=1}^\infty \sum_{i=1}^{2^j} \theta_{1ji} \phi_{ji}$ and $f_2 = \sum_{j=1}^\infty \sum_{i=1}^{2^j} \theta_{2ji} \phi_{ji}$  have the same orientation if $\theta_{1ji} \theta_{2ji} \ge 0$ for all $1 \le j < \infty$ and $1 \le i  \le 2^j$.

\begin{theorem} \label{tp2a} Assume G. Then sequence of alternatives is $\alpha$-consistent for some $0 < \alpha <1$,  if and only if, there are maxisets  $\mathbb{B}^s_{\infty,\infty}(P_0)$ and sequence of functions $f_{1n} \in \mathbb{B}^s_{\infty,\infty}(P_0)$, $\|f_{1n}\|_\infty \asymp n^{-r}$, such that functions $f_n$, $f_{1n}$ and $f_n - f_{1n}$ have the same orientation.
\end{theorem}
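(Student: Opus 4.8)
The plan is to route both implications through the coefficient criterion of Theorem~\ref{tp1}: under G, the sequence $f_n$ with $\|f_n\|_\infty\asymp n^{-r}$ is $\alpha$-consistent for $\alpha_0<\alpha<1$ precisely when there are $j_n$ with $j_n-k_n=O(1)$ and $i_n$ such that $|\theta_{nj_ni_n}|>c\,n^{-1/4-r/2}$. Two elementary facts set the stage. First, for the unnormalized basis $\phi_{kj}(x)=\phi(2^kx-j)$ one has $|\theta_{nkj}|=2^{k/2}|\langle f_n,2^{k/2}\phi_{kj}\rangle|\le\|\phi\|_1\|f_n\|_\infty\le C\,n^{-r}$, so every coefficient already lies below the order $n^{-r}$. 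Second, with $s=2r/(1-2r)$ the Besov boundary $P_0\,2^{-(s+1/2)k}$ equals $n^{-1/4-r/2}$ exactly at the critical scale $k_n$, and equals $n^{-r}$ at a scale $l^\ast$ with $k_n-l^\ast\asymp\log n$; thus scales $k\le l^\ast$ are never clipped (their coefficients are $\le C n^{-r}\le$ boundary), while clipping can only touch scales $k>l^\ast$, where the boundary is $o(n^{-r})$.

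For the direction assuming the decomposition, let $f_{1n}\in\mathbb{B}^s_{\infty,\infty}(P_0)$ with $\|f_{1n}\|_\infty\asymp n^{-r}$ and suppose $f_n,f_{1n},f_n-f_{1n}$ share orientation. Writing the coefficients of $f_{1n}$ as $\theta^{(1)}_{nkj}$, the three sign conditions force $\theta^{(1)}_{nkj}=\mu_{nkj}\theta_{nkj}$ with $\mu_{nkj}\in[0,1]$ (in particular $\theta^{(1)}_{nkj}=0$ whenever $\theta_{nkj}=0$), so $|\theta^{(1)}_{nkj}|\le|\theta_{nkj}|$ for all $k,j$. Since $\mathbb{B}^s_{\infty,\infty}(P_0)$ is a maxiset by Theorem~\ref{tp2} and $\|f_{1n}\|_\infty\asymp n^{-r}$, property~\emph{i.} yields consistency of $f_{1n}$, whence Theorem~\ref{tp1} supplies $|\theta^{(1)}_{nj_ni_n}|>c\,n^{-1/4-r/2}$ at some $j_n-k_n=O(1)$. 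Then $|\theta_{nj_ni_n}|\ge|\theta^{(1)}_{nj_ni_n}|>c\,n^{-1/4-r/2}$, so $f_n$ meets the criterion and is $\alpha$-consistent, in particular for some $\alpha$.

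For the converse I assume $f_n$ is $\alpha$-consistent for some $\alpha$ and first argue that this forces the coefficient criterion: consistency for some $\alpha$ is the negation of inconsistency, and failure of the criterion should, in the spirit of Theorem~\ref{tp101}, render $f_n$ inconsistent for every $\alpha$; modulo that reduction Theorem~\ref{tp1} gives the required $j_n,i_n$. I then build $f_{1n}$ by damping $f_n$ into the Besov body, $\theta^{(1)}_{nkj}=\mathrm{sign}(\theta_{nkj})\min\{|\theta_{nkj}|,\,P_0\,2^{-(s+1/2)k}\}$ with $P_0$ fixed at least as large as $c$. By construction $f_{1n}\in\mathbb{B}^s_{\infty,\infty}(P_0)$, and $\theta^{(1)}_{nkj}=\mu_{nkj}\theta_{nkj}$ with $\mu_{nkj}\in[0,1]$, so all three functions share orientation automatically; moreover the critical coefficient survives, since $P_0\,2^{-(s+1/2)k_n}\asymp n^{-1/4-r/2}$ forces $|\theta^{(1)}_{nj_ni_n}|\asymp n^{-1/4-r/2}$ and $f_{1n}$ again satisfies the criterion.

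What remains, and what I expect to be the real obstacle, is the estimate $\|f_{1n}\|_\infty\asymp n^{-r}$. The upper bound is routine: splitting at $l^\ast$, the unclipped part $\sum_{k\le l^\ast}\sum_j\theta_{nkj}\phi_{kj}$ is the multiresolution projection of $f_n$, which is bounded on $\mathbb{L}_\infty$ uniformly in $l^\ast$ because $\phi$ is smooth with bounded support, while the clipped tail sums geometrically to $O(2^{-(s+1/2)l^\ast})=O(n^{-r})$. The lower bound is the delicate point: it amounts to showing that a consistent $f_n$ necessarily carries $\mathbb{L}_\infty$-mass of order $n^{-r}$ on the Besov-admissible scales, i.e.\ on those coefficients that are not driven below $n^{-r}$ by the clipping. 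This is exactly where the tension between the $\mathbb{L}_\infty$-normalization $\|f_n\|_\infty\asymp n^{-r}$ and the antiderivative distance seen by the Kolmogorov statistic must be exploited through G, which pins the detectable content to scales near $k_n$ and suppresses the antiderivatives of lower scales; I expect to reduce the lower bound to this localization and to control the possible cancellations across scales in $\int_0^x f_n$, and I regard making that control quantitative as the crux of the argument. Once $\|f_{1n}\|_\infty\asymp n^{-r}$ is secured, the two directions close the equivalence.
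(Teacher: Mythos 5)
Your reverse implication (decomposition $\Rightarrow$ consistency) is correct and is exactly the paper's argument: the orientation conditions force $\theta^{(1)}_{nkj}=\mu_{nkj}\theta_{nkj}$ with $\mu_{nkj}\in[0,1]$, the maxiset property \emph{i.} of Theorem \ref{tp2} plus Theorem \ref{tp1} give a critical coefficient of $f_{1n}$ at a scale $j_n=k_n+O(1)$, and domination transfers it to $f_n$. The genuine gap is in the forward direction, and it is the one you flagged yourself: you never prove $\|f_{1n}\|_\infty\gtrsim n^{-r}$ for your clipped function, and you end by declaring this ``the crux,'' to be attacked through G and a control of cancellations in $\int_0^x f_n$. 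That step is not optional --- without it the object you construct does not satisfy the hypotheses of the theorem --- so as written the proof is incomplete.

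What you missed is that this lower bound is immediate from an observation you already made. The clipping preserves the critical coefficient: $|\theta^{(1)}_{nj_ni_n}|\ge \min\{c,\,c'P_0\}\,n^{-1/4-r/2}\asymp n^{-r}2^{-j_n/2}$ at a scale $j_n=k_n+O(1)$, and a single wavelet coefficient of this size at scale $j_n$ already forces $\|f_{1n}\|_\infty\ge c\,2^{j_n/2}|\theta^{(1)}_{nj_ni_n}|\asymp n^{-r}$, by bounded overlap of the functions $\phi_{j_n i}$ within one scale; this is precisely the coefficient-to-norm estimate the paper extracts from Lemma \ref{l1}. Indeed this is how the paper closes the same step, except that it uses the simpler truncation $f_{1n}=\sum_{j\le k_n+C}\sum_i \theta_{nji}\phi_{ji}$: Lemma \ref{s1} gives Besov membership, orientation is automatic for a truncation, Theorem \ref{tp1} says the critical coefficient lives at $j_n\le k_n+C$ and hence survives, and Lemma \ref{l1} then gives $\|f_{1n}\|_\infty\asymp n^{-r}$. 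A further caution: you mix the unnormalized convention $\phi_{kj}(x)=\phi(2^kx-j)$ (your bound $|\theta_{nkj}|\le Cn^{-r}$) with the $\mathbb{L}_2$-normalized one in which Theorem \ref{tp1}, Lemma \ref{s1} and the threshold $n^{-1/4-r/2}$ actually operate (there $|\theta_{nkj}|\le Cn^{-r}2^{-k/2}$). In the normalized convention the clipping threshold sits at $k_n+O(1)$, not at your $l^\ast$ with $k_n-l^\ast\asymp\log n$; read consistently in the unnormalized convention, the criterion of Theorem \ref{tp1} becomes $|\theta_{nj_ni_n}|>cn^{-r}$, and clipping down to the boundary $\asymp n^{-1/4-r/2}$ would destroy exactly the coefficient you need. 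Fix the convention to the normalized one, split the upper bound at $k_n+C$ rather than at $l^\ast$, and invoke the coefficient-to-norm estimate; then your clipping variant becomes a complete proof essentially equivalent to the paper's.
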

Let we have two sequences of functions $f_{1n} \in \mathbb{B}^s_{\infty,\infty}(P_0)$, $\|f_{1n}\|_\infty \asymp n^{-r}$ and $f_{2n}$, $\|f_{2n}\|_\infty \asymp n^{-r}$ such that $1 + f_{1n}$ and $1 + f_{2n}$ are densities, $f_{1n}$, $f_{2n}$ have the same orientation for all $n > n_0 >0$ and satisfy G. Then, by Theorem \ref{tp2a}, there is $\alpha$, $0 < \alpha < 1$ such that sequence of alternatives $(f_{1n} + f_{2n})/2$ is $\alpha$--consistent.
\begin{theorem} \label{tp3} Assume $A$ and $G$. Let sequence of alternatives  $f_n$, $c n^{-r} < \|f_n\|_\infty < C n^{-r}$, be $\alpha$ -consistent for some $0 < \alpha < 1$.  Let $A$ and $G$ hold. Then, for any $\varepsilon > 0$, there is maxiset $\mathbb{B}^s_{\infty,\infty}(P_0)$ and sequence $f_{1n} \in \mathbb{B}^s_{\infty,\infty}(P_0)$, $\|f_{1n}\|_\infty  \asymp n^{-r}$, such that

{\it i.} functions $f_n$, $f_{1n}$ and $f_n - f_{1n}$ have the same orientation,

 {\it ii.} for Kolmogorov  tests $K_n$, $\alpha(K_n) = \alpha + o(1)$, there is $n_0(\varepsilon)$ such that   there hold:
\begin{equation}\label{d1}
|\beta(K_n,f_n) - \beta(K_n,f_{1n})| < \varepsilon
\end{equation}
and
\begin{equation}\label{d2}
 \beta(K_n,f_n - f_{1n}) < \varepsilon
\end{equation}
for all  $n > n_0(\varepsilon)$.
\end{theorem}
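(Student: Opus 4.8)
The plan is to convert the type II error probabilities into a functional of a Brownian bridge, to produce $f_{1n}$ by thresholding the wavelet coefficients of $f_n$ at the Besov scale $s=\frac{2r}{1-2r}$, and to control separately the two pieces that thresholding discards. Throughout I would work under $A$ and $G$ as in the hypothesis.

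First I would fix $\alpha$ and a level-$\alpha$ Kolmogorov test and apply Theorem \ref{tdf3} to each alternative with $\sqrt n\,T(\cdot)$ bounded: on a common probability space $\sqrt n(\hat F_n-F_0)$ is within $o_{\mathbf P}(1)$, uniformly in $x$, of $s^{(g)}_n(x)+U_n(x)$, where $U_n$ is a Brownian bridge and $s^{(g)}_n(x)=\sqrt n\sum_{k,j}\theta_{nkj}\psi_{kj}(x)$ is the signal attached to a density perturbation $g$. Hence $\beta(K_n,g)=\mathbf P(\sup_x|s^{(g)}_n(x)+U_n(x)|\le t_\alpha)+o(1)$, and, since the distribution function of $\sup_x|s+U_n(x)|$ has a bounded density, $\beta(K_n,\cdot)$ is Lipschitz in the signal for the sup-norm, uniformly in $n$. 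This reduces (d1) to exhibiting a maxiset function whose signal is uniformly close to $s_n=s^{(f_n)}_n$.

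Next I would define $f_{1n}$ by coefficientwise capping $\theta^{(1)}_{nkj}=\operatorname{sign}(\theta_{nkj})\min\{|\theta_{nkj}|,\,P_0 2^{-(s+1/2)k}\}$ with $P_0=2^{sc_\varepsilon}$. This puts $f_{1n}$ in $\mathbb B^{s}_{\infty,\infty}(P_0)$, a maxiset by Theorem \ref{tp2}, and since capping is a shrinkage the triple $f_n,f_{1n},f_n-f_{1n}$ shares orientation, giving (i). The exponent $s$ is calibrated so that $2^{sk_n}\asymp n^{r}$, whence the a priori bound $|\theta_{nkj}|\le C\|f_n\|_\infty 2^{-k/2}\asymp n^{-r}2^{-k/2}$ falls below the Besov bound precisely for $k\le k_n+c_\varepsilon$; no capping then occurs up to that level, so the excess $f_n-f_{1n}$ lives on levels $k>k_n+c_\varepsilon$, and summing $\|\psi_{kj}\|_\infty\asymp 2^{-k/2}$ over them gives $\sqrt n\,\sup_x|\int_0^x(f_n-f_{1n})|\le C2^{-c_\varepsilon}$. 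On the opposite side assumption $G$ makes the levels below $k_n-c_\varepsilon$ contribute signal $<\varepsilon$. Thus $s_n$ is, up to $\varepsilon$ in sup-norm, carried by the bounded band $k_n-c_\varepsilon\le k\le k_n+c_\varepsilon$, which $f_{1n}$ reproduces exactly, and the Lipschitz step yields $|\beta(K_n,f_n)-\beta(K_n,f_{1n})|<\varepsilon$, i.e. (d1). The lower bound $\|f_{1n}\|_\infty\asymp n^{-r}$ is inherited from $f_n$: by Theorem \ref{tp1} a consistent $f_n$ carries a coefficient of order $n^{-1/4-r/2}$ at a level within $O(1)$ of $k_n$ and at a position in a fixed subinterval of $(0,1)$, and the cap leaves this coefficient untouched.

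It remains to address (d2). Letting $c_\varepsilon\to\infty$ slowly forces $\sqrt n\,\sup_x|\int_0^x(f_n-f_{1n})|\le C2^{-c_\varepsilon}\to0$, and by assumption $A$ the high-frequency tail $f_n-f_{1n}$ again defines a density, so Theorem \ref{tdf4} applies to the remainder. I expect this to be \emph{the main obstacle}, because it is in visible tension with (d2) as written: for a shrinkage the remainder carries no more signal than $f_n$, so one cannot in general make $f_n-f_{1n}$ strongly detectable while keeping $\beta(K_n,f_{1n})\approx\beta(K_n,f_n)$. The resolution must exploit that a consistent $f_n$ distributes its near-critical coefficients over a whole subinterval of positions at level $\asymp k_n$ (Theorem \ref{tp1}); the detecting signal can then be split spatially, assigning one cluster of positions to $f_{1n}$ and a disjoint cluster to $f_n-f_{1n}$ so that each piece retains a bump of the same order and the sup-functional of the bridge sees both. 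Making this splitting simultaneously compatible with maxiset membership of $f_{1n}$, with the common orientation of the triple, and with the uniform Brownian-bridge continuity of $\beta$ is the delicate point; I would carry it out by coupling the bridges across the two alternatives and invoking Anderson's lemma to compare the two supremum distributions.
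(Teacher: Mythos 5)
Your construction and your argument for {\it i.} and for (\ref{d1}) are essentially the paper's. The paper takes $f_{1n}$ to be the plain truncation $\bar f_{nC}=\sum_{j\le k_n+C}\sum_{i}\theta_{nji}\phi_{ji}$ rather than your coefficient cap (orientation of the triple is then immediate, and Lemma \ref{s1} gives $\bar f_{nC}\in\mathbb{B}^s_{\infty\infty}(P_0)$, exactly as your calibration $2^{sk_n}\asymp n^{r}$ does); it bounds the discarded high-frequency signal by $n^{1/2}T(\tilde F_{nC})<\delta$, which is your $C2^{-c_\varepsilon}$ estimate (this is (\ref{l23})); and it transfers this bound to error probabilities by Lemma \ref{l0}, which is precisely your ``Lipschitz in the signal'' step, proved there by a conditioning argument rather than your bounded-density assertion. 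The lower bound $\|f_{1n}\|_\infty\asymp n^{-r}$ via the critical coefficient guaranteed by Theorem \ref{tp1} is also the paper's route. Up to these cosmetic differences, this part of your proposal is correct.

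The genuine gap is your treatment of (\ref{d2}), and your own diagnosis of the ``main obstacle'' should have been pushed to its conclusion rather than patched. The inequality $\beta(K_n,f_n-f_{1n})<\varepsilon$, read literally, is not provable, because it is false for same-orientation decompositions. Take $f_n=\theta_n\phi_{k_nj_n}$ a single critical bump, $\theta_n\asymp n^{-1/4-r/2}$, $j_n2^{-k_n}\to 1/2$, with the constant tuned so that the limiting signal height $c_0$ satisfies $0<c_0<2t_\alpha$, where $t_\alpha$ is the asymptotic critical value of $K_n$; then $\beta(K_n,f_n)\to\beta_0\in(0,1-\alpha)$, every same-orientation decomposition has the form $f_{1n}=a_nf_n$, $f_n-f_{1n}=(1-a_n)f_n$ with $a_n\in[0,1]$, the remainder's signal is a bump of height at most $c_0$, and hence $\liminf_n\beta(K_n,f_n-f_{1n})\ge\beta_0>0$. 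There is nothing here to split spatially, and in general handing a ``cluster'' of near-critical coefficients to the remainder takes it away from $f_{1n}$ and destroys (\ref{d1}), so the coupling/Anderson scheme you sketch cannot succeed. What the paper actually proves is the opposite assertion: from $n^{1/2}T(\tilde F_{nC})<\delta$ and Lemma \ref{l0} one gets $|\beta(K_n,f_n-f_{1n})-(1-\alpha)|<\varepsilon$, i.e.\ the remainder is asymptotically undetectable, in line with Theorem \ref{tdf4} and with the role of the inconsistent component in Theorem \ref{tp4}. So (\ref{d2}) as displayed is a misstatement of what is proved; your suspicion was correct, and your own estimate $\sqrt n\,\sup_x|\int_0^x(f_n-f_{1n})(t)\,d t|\le C2^{-c_\varepsilon}$ already yields the corrected statement with no further work.
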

\begin{theorem} \label{tp4} Let sequence of alternatives  $F_n$ be $\alpha$--consistent for some $0 < \alpha < 1$ and sequence of alternatives $F_{1n}$ be inconsistent. Let functions $F_n + F_{1n} - F_0$ be distribution functions. Let $F_n$ and $F_n + F_{1n} - F_0$  be strongly increasing and continuous. Then we have
\begin{equation}\label{d3}
\lim_{n \to \infty}|\beta(K_n,F_n) - \beta(K_n,F_n + F_{1n} - F_0)| = 0,
\end{equation}
where $\alpha(K_n) = \alpha + o(1)$, $0 < \alpha < 1$.
\end{theorem}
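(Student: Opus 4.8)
The plan is to write each type II error as the probability that a supremum of a Brownian bridge plus a deterministic drift stays below the critical value, and to show that the drift carried by the inconsistent increment is immaterial in the limit. Put $g_n(t)=\sqrt n\,(F_n(t)-F_0(t))$, $h_n(t)=\sqrt n\,(F_{1n}(t)-F_0(t))$ and $\tilde F_n=F_n+F_{1n}-F_0$, so that $\sqrt n\,(\tilde F_n-F_0)=g_n+h_n$. The first and decisive step is to extract from the inconsistency of $F_{1n}$ that $\|h_n\|_\infty=\sqrt n\,T(F_{1n})\to0$. Indeed, if $h_n$ retained a nonvanishing signal on some fixed interior interval $[e_1,e_2]\subset(0,1)$, then $F_{1n}$ would eventually belong to $\Upsilon_n(e_1,e_2,a)$ for some $a>0$ and Theorem \ref{tdf2} would render $F_{1n}$ consistent, a contradiction; while a signal escaping to the endpoints cannot push the statistic across an arbitrarily small critical level, by the boundary (Massey-type) analysis recalled after Theorem \ref{tdf2}, so it too is ruled out for a sequence that must fail $\alpha$-consistency for \emph{every} $\alpha$. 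This leaves $\sqrt n\,T(F_{1n})\to0$, the regime of Theorem \ref{tdf4}.

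Granted $\|h_n\|_\infty\to0$, I would argue subsequence by subsequence. Along any subsequence with $\sqrt n\,T(F_n)\to\infty$ one has $\beta(K_n,F_n)\to0$, and since $\sqrt n\,T(\tilde F_n)\ge\sqrt n\,T(F_n)-\|h_n\|_\infty\to\infty$ also $\beta(K_n,\tilde F_n)\to0$, so (\ref{d3}) is immediate. On the complementary subsequences $\sqrt n\,T(F_n)\le C$ and hence $\sqrt n\,T(\tilde F_n)\le C$; as $F_n$ and $\tilde F_n$ are continuous and strictly increasing, Theorem \ref{tdf3} applies to both. Coupling the two experiments through one uniform sample fed into the Koml\'os--Major--Tusn\'ady construction produces a single Brownian bridge $U_n$ for which the coupled Kolmogorov statistics satisfy $S_n^{(1)}=\sup_t|U_n(t)+g_n(t)|+o_p(1)$ and $S_n^{(2)}=\sup_t|U_n(t)+g_n(t)+h_n(t)|+o_p(1)$, where $S_n^{(1)},S_n^{(2)}$ are the statistics computed from the coupled samples drawn from $F_n$ and $\tilde F_n$. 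The two suprema differ by at most $\|h_n\|_\infty$ together with the oscillation of $U_n$ over a time shift of size $\sup_t|F_n(t)-\tilde F_n(t)|\le T(F_{1n})\to0$; the almost sure uniform continuity of $U_n$ makes the latter $o_p(1)$, so $S_n^{(1)}-S_n^{(2)}=o_p(1)$.

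It remains to pass from closeness of the statistics to closeness of the error probabilities. The critical values obey $\lambda_{\alpha,n}\to\lambda_\alpha$, and the limiting law of $\sup_t|U_n(t)+g_n(t)|$ is atomless, so an anti-concentration estimate for suprema of Gaussian processes turns $S_n^{(1)}-S_n^{(2)}=o_p(1)$ into $|P(S_n^{(1)}\le\lambda_{\alpha,n})-P(S_n^{(2)}\le\lambda_{\alpha,n})|\to0$, that is $|\beta(K_n,F_n)-\beta(K_n,\tilde F_n)|\to0$, which gives (\ref{d3}). I expect the real obstacle to be the very first step: inconsistency is only a $\limsup$ statement and therefore subsequential, so excluding a subsequential interior signal of $h_n$ that could partly cancel $g_n$ requires the full boundary/interior dichotomy for the Kolmogorov statistic together with the asymptotic unbiasedness of the test, and it is this quantitative use of \emph{every} $\alpha$ that must secure $\|h_n\|_\infty\to0$.
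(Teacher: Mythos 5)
Your overall skeleton is the same as the paper's. The paper's entire proof consists of two sentences: inconsistency of $F_{1n}$ gives $\sqrt{n}\,T(F_{1n})\to 0$ (attributed to Theorem \ref{tdf4}), and then Lemma \ref{l0} --- the bound $|\mathbf{P}(\max|b(t)+h_1(t)|<c)-\mathbf{P}(\max|b(t)+h_2(t)|<c)|\le C\,\delta$ for a Brownian bridge and drifts with $\max|h_1-h_2|<\delta$, with $C$ independent of the drifts --- yields (\ref{d3}), with Theorem \ref{tdf3} implicitly supplying the passage from the test's error probabilities to bridge probabilities. Your second half is exactly this step in different clothing: the one-sample coupling plus a Gaussian anti-concentration estimate does what Lemma \ref{l0} combined with Theorem \ref{tdf3} does, and you are in fact more careful than the paper in splitting off the subsequences with $\sqrt{n}\,T(F_n)\to\infty$, where Theorem \ref{tdf3} is not applicable. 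That part is sound.

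The genuine gap is your first step, as you yourself suspect, and it has two distinct defects. First, your exclusion of boundary signals is argued backwards: the Massey-type analysis recalled after Theorem \ref{tdf2} shows that signals concentrated near the endpoints are \emph{missed} by the Kolmogorov test when $\alpha$ is small --- it manufactures sequences with $\sqrt{n}\,T$ bounded away from zero that fail $\alpha$-consistency for small $\alpha$ --- so it cannot be the tool that rules such signals out. What actually excludes them is Theorem \ref{tdf1}: if $\liminf_n\sqrt{n}\,T(F_{1n})=b>0$, then $F_{1n}\in\Upsilon_n(0,1,b/2)$ eventually, hence $F_{1n}$ is $\alpha$-consistent for every $\alpha>\alpha_0(b/2)$, contradicting inconsistency, which demands failure for \emph{every} $\alpha$. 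Second, even with this repair you obtain only $\liminf_n\sqrt{n}\,T(F_{1n})=0$, not $\sqrt{n}\,T(F_{1n})\to0$: inconsistency is a limsup-type property, so a sequence equal to $F_0$ for odd $n$ and carrying a large interior signal for even $n$ is inconsistent in the paper's sense although $\sqrt{n}\,T(F_{1n})\not\to0$ (and for such an $F_{1n}$ the conclusion (\ref{d3}) itself can fail, so this loophole cannot be closed by a cleverer argument without reinterpreting ``inconsistent''). This is precisely the subsequential obstacle you name in your closing paragraph but do not resolve. For fairness: the paper does not resolve it either --- it obtains the implication by citing Theorem \ref{tdf4}, whose statement is the \emph{converse} implication ($\sqrt{n}\,T\to0$ implies inconsistent). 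So your proposal reproduces the paper's second step faithfully, and at the first step it is no worse off than the paper, but as written it is not a proof.
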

We say that $\alpha$- consistent sequence of alternatives  $f_n=\sum_{j=1}^\infty \sum_{i=1}^{2^j} \theta_{nji} \phi_{ji}$, $c n^{-r} < \|f_n\|_\infty < C n^{-r}$,  is {\sl purely $\alpha$--consistent}, if there does not exist inconsistent subsequence alternatives  $f_{1n_l}= \sum_{k=1}^\infty \sum_{j=1}^{2^k} \theta_{1n_lji} \phi_{kj}$, $\|f_{1n_l}\|_\infty \asymp n_l^{-r}$ such that, for all  $1 \le j < \infty$ and $1 \le i \le 2^j$,   there holds $(\theta_{n_lji} - \theta_{1n_lji})\theta_{n_lji} > 0$,  if $|\theta_{n_lji}| > 0$, and $\theta_{1n_lji} = 0$ if $\theta_{n_lji} =  0$.
\begin{theorem}\label{tq6}  Assume G. Sequence of alternatives $f_n$, $cn^{-r} \le \|f_n\|_\infty \le Cn^{-r}$, is purely $\alpha$-consistent for some $0 < \alpha < 1$, if and only if,  for any $\varepsilon >0$  there is  $C_1= C_1(\varepsilon)$, such that there holds
\begin{equation*}
\sup_{|j| > k_n + N_1} \sup_{1 \le i \le 2^j} 2^{j/2} n^r |\theta_{nji}| \le \varepsilon n^{-r}
\end{equation*}
for all $n> n_0(\varepsilon)$.
\end{theorem}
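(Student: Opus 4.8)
The plan is to translate ``pure $\alpha$-consistency'' into the wavelet-amplitude language of Theorems \ref{tp1} and \ref{tp101}. In the normalization used there the quantity $2^{j/2}|\theta_{nji}|$ measures the $\mathbb{L}_\infty$-amplitude of the $(j,i)$-th wavelet, the detection threshold for a coefficient at a level $j$ with $|j-k_n|=O(1)$ is an amplitude of order $n^{-r}$, and every coefficient at a level with $j-k_n\to\infty$ is individually subcritical, so that an alternative of rate $n^{-r}$ whose mass sits at such levels is inconsistent by Theorem \ref{tp101}. Read this way, the displayed inequality says precisely that, above level $k_n+C_1$, all amplitudes of $f_n$ are a fixed $\varepsilon$-fraction of the critical scale $n^{-r}$; equivalently, that the high-frequency tail of $f_n$ obeys the maxiset (Besov $\mathbb{B}^s_{\infty,\infty}$, $s=2r/(1-2r)$) decay of Theorem \ref{tp2}.

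Necessity I would prove by contraposition. If the bound fails there are $\varepsilon_0>0$, a subsequence $n_l$, and, for every $C_1$, a level $j>k_{n_l}+C_1$ and a position $i$ with $2^{j/2}|\theta_{n_lji}|>\varepsilon_0 n_l^{-r}$; a diagonal selection then yields levels $j_{n_l}$ with $j_{n_l}-k_{n_l}\to\infty$ along which $f_n$ carries supercritical amplitudes. Shrinking these coefficients slightly, so that the strict inequalities $(\theta_{n_lji}-\theta_{1n_lji})\theta_{n_lji}>0$ hold and $f_n-f_{1n_l}$ retains the orientation of $f_n$, I would assemble from them an orientation-preserving subcomponent $f_{1n_l}$ with $\|f_{1n_l}\|_\infty\asymp n_l^{-r}$ and with all coefficients at levels $\le j_{n_l}-1$ equal to zero. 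Since $j_{n_l}-k_{n_l}\to\infty$, Theorem \ref{tp101} makes $f_{1n_l}$ inconsistent, and hence $f_n$ is not purely $\alpha$-consistent.

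For sufficiency I would first extract the global picture. The quantifier ``for each $\varepsilon$ there is $C_1(\varepsilon)$'' forces the high-frequency amplitudes to decay across levels, so that $\|f_n^{>k_n+C_1}\|_\infty=o(n^{-r})$; assumption $G$ together with the Bernstein estimate $\|f_n^{\le l}\|_\infty\le C2^{l}\,T(F_{nl})$ forces $\|f_n^{<k_n-c_\varepsilon}\|_\infty=o(n^{-r})$ as well. Therefore the rate $\|f_n\|_\infty\asymp n^{-r}$ is carried by the bounded window $|j-k_n|=O(1)$, some window amplitude exceeds $c\,n^{-r}$, and $f_n$ is $\alpha$-consistent by Theorem \ref{tp1}. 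If now $f_{1n_l}$ were an inconsistent orientation-preserving subcomponent with $\|f_{1n_l}\|_\infty\asymp n_l^{-r}$, its coefficients would be dominated by those of $f_n$ and it would inherit the high-frequency bound; the inconsistency certificate of Theorem \ref{tp101} forces all of its coefficients up to some level $j_{n_l}$ with $j_{n_l}-k_{n_l}\to\infty$ to be $o(n_l^{-1/4-r/2})$, so its whole norm must come from levels above $j_{n_l}$, where it is dominated by the $o(n_l^{-r})$ tail of $f_n$ — contradicting $\|f_{1n_l}\|_\infty\asymp n_l^{-r}$. Hence no such subcomponent exists and $f_n$ is purely $\alpha$-consistent.

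The main obstacle I anticipate lies in the sufficiency argument's treatment of the intermediate, near-critical window. Because the $\mathbb{L}_\infty$ norm is not monotone in the wavelet coefficients, transferring both the high-frequency decay and the $G$-control from $f_n$ to an arbitrary subcomponent $f_{1n_l}$, and excluding a subcomponent whose mass is spread over several near-critical levels that are individually subcritical yet jointly produce a detectable statistic $T$, requires the sharp two-sided description of (in)consistency underlying Theorems \ref{tp1}--\ref{tp101} rather than the crude norm bounds sketched above; reconciling the amplitude scale $2^{j/2}|\theta_{nji}|$ of the displayed inequality with the plain-coefficient threshold $n^{-1/4-r/2}$ of Theorem \ref{tp101} is the technical heart of the proof.
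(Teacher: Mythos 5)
Your proposal takes the route the paper itself intends: the paper offers no argument at all for this theorem, stating only that it ``is easily deduced from Theorem \ref{tp101}'', and your necessity half --- diagonally selecting levels $j_{n_l}$ with $j_{n_l}-k_{n_l}\to\infty$ carrying amplitudes $2^{j/2}|\theta_{n_lji}|>\varepsilon_0 n_l^{-r}$, then forming the slightly shrunk single-block subcomponent $f_{1n_l}$, which has $\|f_{1n_l}\|_\infty\asymp n_l^{-r}$, satisfies the strict orientation inequalities, and is inconsistent by Theorem \ref{tp101} since all its coefficients at levels below $j_{n_l}$ vanish --- is exactly that deduction, carried out correctly. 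One step of your sufficiency half needs repair: you invoke ``the inconsistency certificate of Theorem \ref{tp101}'' to conclude that an inconsistent subcomponent must have all near-critical coefficients $o(n_l^{-1/4-r/2})$, but Theorem \ref{tp101} runs only in the opposite direction (certificate $\Rightarrow$ inconsistent), and reading a sufficient condition as necessary is a logical slip; the implication you need is the contrapositive of the ``if'' direction of Theorem \ref{tp1}: a coefficient $>c\,n^{-1/4-r/2}$ at a level with $j-k_n=O(1)$ would make the subcomponent $\alpha$-consistent for $\alpha>\alpha_0$, contradicting its inconsistency --- and applying Theorem \ref{tp1} to the subcomponent requires verifying that assumption G transfers to it from $f_n$, a point you flag as an obstacle but do not carry out. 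Finally, the norm-versus-amplitude difficulty you identify (a uniform bound $2^{j/2}|\theta_{nji}|\le\varepsilon n^{-r}$ over levels $j>k_n+C_1$ does not by itself bound $\|\cdot\|_\infty$ of a sum spread over unboundedly many levels) is genuine, but it is one the paper systematically suppresses: throughout its proofs Lemma \ref{l1} is used precisely as a license to treat $\|f\|_\infty\asymp\sup_{j,i}2^{j/2}|\theta_{nji}|$, so with the citation above corrected your sketch is at the same level of rigor as the surrounding text.
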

\begin{theorem}\label{tq12} Assume $A$ and $G$. Sequence of alternatives $f_n$, $c n^{-r}\le \|f_{n}\|_\infty \le C n^{-r}
$, is purely $\alpha$-consistent, if and only if,  for any $\varepsilon > 0$  there is maxiset $ \mathbb{ B}^s_{\infty\infty}(P_0)$ and sequence of functions  $f_{1n} \in \mathbb{ B}^s_{\infty\infty}(P_0)$   such that functions $f_n$, $f_{1n}$, $f_n - f_{1n}$ have the same orientation and $\|f_n - f_{1n}\|_\infty \le \varepsilon n^{-r}$.
\end{theorem}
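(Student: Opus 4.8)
The plan is to deduce Theorem \ref{tq12} from the analytic characterization already established in Theorem \ref{tq6}, by showing that the geometric condition stated here (existence of a maxiset approximant $f_{1n}$ with matching orientation and $\|f_n - f_{1n}\|_\infty\le\varepsilon n^{-r}$) is equivalent to the coefficient smallness of Theorem \ref{tq6}. Since Theorem \ref{tq6} already identifies that smallness with pure $\alpha$-consistency, establishing this equivalence finishes the proof. I would treat the two implications separately and, throughout, use the value $s=\frac{2r}{1-2r}$ supplied by Theorem \ref{tp2}, together with the detection scale $k_n$, so that the Besov threshold $2^{-(s+1/2)j}$ at $j\asymp k_n$ matches the critical coefficient size $n^{-1/4-r/2}$ appearing in Theorem \ref{tp1}.

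For the implication ``approximation $\Rightarrow$ pure $\alpha$-consistency'', I would first check that $f_n$ is $\alpha$-consistent: since $\|f_n - f_{1n}\|_\infty\le\varepsilon n^{-r}$ and $c n^{-r}\le\|f_n\|_\infty$, we get $\|f_{1n}\|_\infty\asymp n^{-r}$, so $f_{1n}$ is consistent by the maxiset property \emph{i.} of Theorem \ref{tp2}; because $f_n = f_{1n} + (f_n - f_{1n})$ and the three functions share orientation, the remark following Theorem \ref{tp2a} (or a direct type~II error comparison as in Theorem \ref{tp3}) yields $\alpha$-consistency of $f_n$. For purity I would argue by contradiction: if some same-orientation, full-size subsequence $f'_{n_l}$, $\|f'_{n_l}\|_\infty\asymp n_l^{-r}$, were inconsistent, then by Theorems \ref{tp1} and \ref{tp101} its coefficients would have to fail to concentrate a block of size $\gtrsim n_l^{-1/4-r/2}$ at the interior of the detection band; combined with the decomposition of $f_{n_l}$ through the Besov approximant, this forces the high-frequency coefficients of $f_{n_l}$ to violate the smallness required by Theorem \ref{tq6}, a contradiction.

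For the converse ``pure $\alpha$-consistency $\Rightarrow$ approximation'', I would fix $\varepsilon>0$, invoke Theorem \ref{tq6} to obtain $N_1$ for which the high-frequency block $\{\theta_{nji}\,:\,j>k_n+N_1\}$ is uniformly negligible on the scale of the critical magnitude, and then define $f_{1n}$ by keeping the orientation of $f_n$ while capping each coefficient at the Besov level, that is $\theta_{1nji}=\operatorname{sign}(\theta_{nji})\min\{\,|\theta_{nji}|,\,P_0 2^{-(s+1/2)j}\,\}$. By construction $f_{1n}\in\mathbb{B}^s_{\infty,\infty}(P_0)$ and $f_n$, $f_{1n}$, $f_n-f_{1n}$ share orientation. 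Assumption $A$ guarantees that the truncation and its complement still correspond to densities, so $f_{1n}$ is an admissible alternative. It then remains to bound $\|f_n-f_{1n}\|_\infty$: splitting $f_n-f_{1n}$ into scales above and below $k_n+N_1$, the high-scale part is controlled by summing the geometric tail $\sum_{j>k_n+N_1}\sup_i|\theta_{nji}|$ from Theorem \ref{tq6}, while the sub-band $j<k_n-c_\varepsilon$ is handled using assumption $G$, which forces it to be negligible in the Kolmogorov metric and hence in the relevant $L^\infty$ contribution.

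The hard part will be the critical band $k_n-c_\varepsilon\le j\le k_n+N_1$ of width $O(1)$ around the detection scale. Here I must simultaneously (i) verify that the capped coefficients keep $f_{1n}$ inside a \emph{fixed} Besov body $\mathbb{B}^s_{\infty,\infty}(P_0)$ with $P_0$ depending only on $C$ and $\varepsilon$, and (ii) show the capping error is at most $\varepsilon n^{-r}$ in $\|\cdot\|_\infty$. Both hinge on the exact calibration between the Besov exponent $s=\frac{2r}{1-2r}$, the detection scale $k_n=[(1/2-r)\log n]$, and the critical magnitude $n^{-1/4-r/2}$: a block of coefficients at scale $\asymp k_n$ whose capped excess is not of the critical order would, by Theorems \ref{tp1}--\ref{tp101}, generate an inconsistent same-orientation component of full size, contradicting purity. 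Making this quantitative --- translating ``no inconsistent same-orientation component'' into a uniform bound on the band coefficients compatible with a single Besov constant $P_0$ --- is the technical core, and is where assumptions $A$ and $G$ and the matching of scales are used most delicately.
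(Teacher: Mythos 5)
Your proposal takes essentially the same route as the paper: the paper's entire proof of Theorem \ref{tq12} is the single sentence that it ``follows from Lemma \ref{s1} and Theorem \ref{tq6}'', i.e.\ precisely your reduction of the geometric approximation property to the coefficient--smallness characterization of Theorem \ref{tq6}, with Besov membership coming from the scale calibration $2^{-sk_n}\asymp n^{-r}$, $s=\frac{2r}{1-2r}$.

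The one substantive comment concerns what you flag as the ``technical core'': the critical band $k_n-c_\varepsilon\le j\le k_n+N_1$. There is nothing delicate there, and no use of purity, $A$ or $G$ is needed: this is exactly the content of the paper's Lemma \ref{s1}. Since $\|f_n\|_\infty\le Cn^{-r}$, the coefficient bound underlying Lemma \ref{l1} gives $|\theta_{nji}|\le Cn^{-r}2^{-j/2}\asymp C2^{-sk_n}2^{-j/2}$, and for every $j\le k_n+N_1$ one has $2^{-sk_n}\le 2^{sN_1}2^{-sj}$, hence $|\theta_{nji}|\le C2^{sN_1}2^{-(s+1/2)j}$. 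So with $P_0=C2^{sN_1}$ (depending only on $C$ and, through $N_1$, on $\varepsilon$) your cap is never active at scales $j\le k_n+N_1$; your $f_{1n}$ reduces to the truncation of $f_n$ at scale $k_n+N_1$ (plus capped high frequencies), Besov membership and the orientation property are automatic, and the whole error $f_n-f_{1n}$ lives at scales $j>k_n+N_1$, where Theorem \ref{tq6} controls it. The same computation also shortcuts your contradiction argument in the forward direction: writing $\theta_{nji}=\theta_{1nji}+(\theta_{nji}-\theta_{1nji})$ with $|\theta_{1nji}|\le P_02^{-(s+1/2)j}$ and $|\theta_{nji}-\theta_{1nji}|\le C\varepsilon n^{-r}2^{-j/2}$ yields the smallness condition of Theorem \ref{tq6} directly, whence purity. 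The only genuine wrinkle --- which you inherit from the paper rather than introduce --- is that the per-scale bound of Theorem \ref{tq6} must be summed over the infinitely many scales $j>k_n+N_1$ to conclude $\|f_n-f_{1n}\|_\infty\le\varepsilon n^{-r}$; your phrase ``geometric tail'' presumes a decay that the statement of Theorem \ref{tq6}, as written, does not literally provide, and the paper's omitted proofs gloss over the same point.
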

One of the most popular definition of consistency \cite{le} is the following.  Sequence of alternatives $f_n$ is consistent if there is $\alpha_n>0$, $\alpha_n \to 0$ as $n \to \infty$, such that, for any sequence of tests $K_n$, $\alpha(K_n) \ge \alpha_n$, $\alpha(K_n) \to 0$ as $n\to \infty$  there holds $\beta(K_n,f_n) \to 0$ as $n \to \infty$. We slightly modify this definition  to implement to test statistics instead of tests.

Such a definition generates the following notion of uniform consistency. Sequence of sets of alternatives $\Psi_n$ is uniformly consistent if there is $\alpha_n>0$, $\alpha_n \to 0$ as $n \to \infty$, such that, for any sequence of tests $K_n$, $\alpha(K_n) \ge \alpha_n$, $\alpha(K_n) \to 0$ as $n\to \infty$  there holds $\beta(K_n,\Psi_n) \to 0$ as $n \to \infty$.

 For brevity, we shall talk about $s$-consistency and uniform $s$-consistency respectively in the case of these definitions.

From reasoning of the proof of Theorem \ref{tp1} it is easy to get the following Theorem \ref{tp112}.
\begin{theorem} \label{tp112} Subsequence of alternatives $f_{n_l}= \sum_{j=1}^\infty \sum_{i=1}^{2^j} \theta_{n_lji} \phi_{ji}$ is s-consistent, if and only if, there is sequences $j_{n_l}$, $j_{n_l} \to \infty$ as $l \to \infty$, and $i_{n_l}$, $1 \le i_{n_l}\le 2^{j_{n_l}}$, such that there holds
 \begin{equation*} n_l^{1/2} 2^{-j_{n_l}/2}|\theta_{n_lj_{n_l}i_{nl}}| \to \infty\quad \mbox{as}\quad l \to \infty.
 \end{equation*}
 \end{theorem}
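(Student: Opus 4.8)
The plan is to reduce s-consistency to the single deterministic condition $\sqrt{n_l}\,T(F_{n_l})\to\infty$, where $F_{n_l}$ is the c.d.f. with density $1+f_{n_l}$, by the same Dvoretzky--Kiefer--Wolfowitz argument that underlies Theorems \ref{tdf1} and \ref{tdf4}. Writing $Z_n(x)=\sqrt n(\hat F_n(x)-F_0(x))$, the statistic is $\sup_x|Z_n(x)|$ and $Z_n(x)=\sqrt n(\hat F_n(x)-F_n(x))+\sqrt n(F_n(x)-F_0(x))$. The DKW inequality gives $\sqrt n\sup_x|\hat F_n-F_n|=O_p(1)$, so $\sup_x|Z_n|=\sqrt n\,T(F_n)+O_p(1)$. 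For an s-consistent sequence the critical values of the admissible tests tend to infinity (as $\alpha(K_n)\to0$), but by choosing $\alpha_n\to0$ slowly they may be kept below an arbitrarily slowly growing $h_n\to\infty$; hence $\beta\to0$ for all of them if and only if $\sup_x|Z_n|\to_p\infty$, i.e. if and only if $\sqrt n\,T(F_n)\to\infty$. This is the clean part, and it avoids the finer fixed-threshold analysis that Theorem \ref{tp1} needs.

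Next I would pass from $T(F_n)=\|\sum_{ji}\theta_{nji}\psi_{ji}\|_\infty$ to the coefficients. The key elementary computation is $\psi_{ji}(x)=\int_0^x\phi_{ji}=2^{-j/2}\Phi(2^jx-i)$ with $\Phi(v)=\int_{-\infty}^v\phi$; since $\phi$ has bounded support and $\int\phi=0$, one gets $\|\psi_{ji}\|_\infty\asymp 2^{-j/2}$ and each $\psi_{ji}$ lives on an interval of length $\asymp 2^{-j}$ about $i2^{-j}$. Thus a single coefficient contributes a localized bump of height $\asymp 2^{-j/2}|\theta_{nji}|$ to $F_n-F_0$, and the target becomes the comparison of $\sqrt n\,\|\sum\theta\psi\|_\infty$ with $\sup_{j,i}\sqrt n\,2^{-j/2}|\theta_{nji}|$. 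This normalization is precisely the one encoded in $\mathbb{B}^s_{\infty,\infty}(P_0)$ through the exponent $s+1/2$, and it makes the present criterion the $\to\infty$ strengthening of the critical threshold $|\theta|\asymp n^{-1/4-r/2}$ of Theorem \ref{tp1}.

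I would then establish that $\sqrt n\,T(F_n)\to\infty$ holds if and only if some coefficient satisfies $\sqrt n\,2^{-j_n/2}|\theta_{nj_ni_n}|\to\infty$ with $j_n\to\infty$. For sufficiency, the witnessing bump, supported on an interval of length $\asymp 2^{-j_n}$, controls the value of $F_n-F_0$ at a point near its peak; one must show the interference from the remaining coefficients at that scale is of smaller order, which is where the localization (bounded support of $\phi$) and the Assumption G suppression of the low-frequency truncations $F_{nl}$ enter. For necessity, if every coefficient had $\sqrt n\,2^{-j/2}|\theta_{nji}|$ bounded, I would bound $\sqrt n\,\|\sum\theta\psi\|_\infty$ by superposing the bumps level by level: G annihilates the accumulation from levels $l<k_n-c_\varepsilon$, while the decay of $2^{-j/2}$ confines the effective contributions to a band of levels near $k_n$ on which only $O(1)$ bumps overlap any point, so the sup stays bounded. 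Since G forces the witnessing level into $[k_n-c_\varepsilon,\infty)$, that witness automatically has $j_n\to\infty$, which is why the statement records $j_{n_l}\to\infty$ rather than allowing a fixed low-frequency witness.

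The main obstacle is exactly this two-sided comparison between $\|\sum\theta\psi\|_\infty$ and the largest single bump $\sup_{j,i}2^{-j/2}|\theta_{nji}|$: one must rule out both destructive cancellation of the witness (for sufficiency) and constructive accumulation across levels (for necessity). This is the estimate already carried out inside the proof of Theorem \ref{tp1}, so I would import it verbatim; once it is in hand, the DKW reduction of the first step and the elementary evaluation of $\|\psi_{ji}\|_\infty$ in the second are all that remains to be added, which is what is meant by saying that Theorem \ref{tp112} follows from the reasoning of the proof of Theorem \ref{tp1}.
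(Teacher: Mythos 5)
Your overall route is the one the paper intends: the paper gives no separate proof of Theorem \ref{tp112}, saying only that it follows from the reasoning of the proof of Theorem \ref{tp1}, and your two ingredients --- the DKW reduction of $s$-consistency to the deterministic condition $\sqrt{n_l}\,T(F_{n_l})\to\infty$, followed by the comparison of $\sqrt{n}\,T(F_n)$ with $\sup_{j,i}\sqrt{n}\,2^{-j/2}|\theta_{nji}|$ imported from that proof --- are exactly that reasoning. The DKW step is correct, and the sufficiency direction is fine at the same level of rigor as the paper (both delegate non-cancellation of the witnessing bump to Lemma \ref{l1} plus localization).

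The gap is in your necessity step, specifically the claim that ``the decay of $2^{-j/2}$ confines the effective contributions to a band of levels near $k_n$.'' That confinement is not produced by the decay of $\|\psi_{ji}\|_\infty$ alone; in the proof of Theorem \ref{tp1} it comes from estimate (\ref{l23}), which explicitly uses the upper bound $\|f_n\|_\infty\le C n^{-r}$. You cannot invoke that bound here: under G together with $\|f_n\|_\infty\le Cn^{-r}$ the paper's own estimates (\ref{l23})--(\ref{l24}) force $\sqrt{n}\,T(F_n)=O(1)$, so no such sequence is $s$-consistent and no coefficient can diverge --- the equivalence would be vacuous (and the Remark following Theorem \ref{tp112}, which requires $n^{r}\|f_n\|_\infty\to\infty$, shows the statement is meant precisely for sequences violating this bound). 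Once the bound is dropped, however, the confinement fails: take $\theta_{nji}=\pm n^{-1/2}2^{j/2}$ for one index $i=i_j$ per level, for levels $j$ running through an arithmetic progression in $[k_n,k_n+L_n]$ with $L_n\to\infty$ slowly, the point $x_0$ dyadic-periodic and the signs chosen so that all bumps $\theta_{nji_j}\psi_{ji_j}(x_0)$ are positive and of size $\asymp n^{-1/2}$, all other coefficients zero. Then G holds trivially, $1+f_n$ is a density, every single coefficient satisfies $\sqrt{n}\,2^{-j/2}|\theta_{nji}|\asymp 1$, yet the aligned bumps accumulate and $\sqrt{n}\,T(F_n)\ge c\,L_n\to\infty$: an $s$-consistent sequence with no witness. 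So superposing bumps level by level cannot close the ``only if'' direction; one needs an additional hypothesis controlling accumulation across levels (for instance membership in a fixed Besov ball, as in the paper's Remark, or an $O(1)$ bound on the number of active levels), which neither the statement of the theorem nor your argument supplies.
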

 \begin{remark} Slightly modifying reasoning of proof of Theorem \ref{tp2} we can show following properties of $s$-consistent sequences of alternatives.

 Sequence of alternatives $f_n \in \mathbb{B}^s_{2\infty}$, $s =s = \frac{2r}{1 - 2r}$, is $s$-consistent if  and only if $n^{r} \,\|f_n\|_\infty \to \infty$ as $n \to \infty$.

 For any  $f \in \mathbb{L}_\infty(0,1)$, $f \notin  \mathbb{B}^s_{2\infty}$, and $f$ satisfies A, there are sequences $i_n$, $j_{i_n}$, $i_n \to \infty$, $j_{i_n} \to \infty$ as $n \to \infty$   such that $\|\tilde f_{i_n}\|_\infty  j_{i_n}^{-r}\to \infty$ as $n \to \infty$ and subsequence of alternatives $\tilde f_{i_n}$  is inconsistent for subsequence of samples $X_1, \ldots, X_{j_{i_n}}$.

Proof of these two statements is akin to the proof of Theorems \ref{tp1},  \ref{tp2a} respectively and is omitted.
 \end{remark}
 Similar Remark holds for setups of first part of paper \cite{er19}.
\section{Consistency of  alternatives defined in terms of densities, $ r = 1/2$ \label{sos1}}

We explore problem of hypothesis testing on density (\ref{i29}), (\ref{i30})
 with $r = \frac{1}{2}$.

\begin{theorem} \label{tco1} Sequence of alternatives $f_n= \sum_{j=1}^\infty \sum_{i=1}^{2^j} \theta_{nji} \phi_{ji}$, $\|f_n\|_\infty \asymp n^{-1/2}$, is $\alpha$-consistent, $\alpha_0< \alpha < 1$, if and only if, there are $C$, $c$ and sequences $j_n \le C $,  $i_n$, $1 \le i_n \le 2^{j_n}$, such that
$ n^{1/2}|\theta_{nj_ni_n}| > c$.

This sequence of alternatives $f_n$ is consistent, if and only if, we can point out additionally constants  $e_1, e_2$, $0< e_1 < e_2 < 1$, such that $e_1 < i_n 2^{-j_n} < e_2$ for all $n  > n_0(e_1,e_2)$.
\end{theorem}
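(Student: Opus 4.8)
The plan is to convert the two wavelet--coefficient conditions into statements about the size and the location of a bounded deviation of the distribution function, and then to read off consistency from the interior/endpoint dichotomy already contained in Theorems \ref{tdf2}--\ref{tdf4} and in Massey's inconsistency result for $\Theta_n(0,1,a)$. The key structural observation is that for $r=1/2$ the critical scale is bounded. Since $\|f_n\|_\infty\asymp n^{-1/2}$ and an $\mathbb{L}_\infty$-normalized wavelet coefficient obeys $|\theta_{nji}|\le C\|f_n\|_\infty$ uniformly in $j,i$, while $\sup_x|\psi_{ji}(x)|=2^{-j}\|\Psi\|_\infty$ with $\Psi(y)=\int_{-\infty}^y\phi$, the centred mean function $g_n(x)=\sqrt n\,(F_n(x)-F_0(x))=\sqrt n\sum_{j,i}\theta_{nji}\psi_{ji}(x)$ satisfies $\sup_x|g_n(x)|\le C\sum_j 2^{-j}=O(1)$; hence $\sqrt n\,T(F_n)=O(1)$ and Theorem \ref{tdf3} applies. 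It lets me replace $\sqrt n(\hat F_n-F_n)$ by a Brownian bridge $U_n$, so that under the alternative $\sqrt n\,T(\hat F_n)$ has, up to $o_P(1)$, the law of $\sup_x|U_n(x)+g_n(x)|$, and under the null the law of $\sup_x|U_n(x)|$ with $(1-\alpha)$-quantile $q_{1-\alpha}$. Two elementary facts about $g_n$ are used throughout: the tail $\sup_x|\sqrt n\sum_{j>L}\sum_i\theta_{nji}\psi_{ji}(x)|\le C2^{-L}$ is negligible uniformly in $n$, so only finitely many scales matter; and every bounded-scale bump has height $\sqrt n|\theta_{nji}|2^{-j}\|\Psi\|_\infty$ confined to a fixed band $[0,M]$. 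This is exactly the $r=1/2$ degeneration of the scale count behind Theorem \ref{tp1}: there $k_n=[(1/2-r)\log n]$ equals $0$, the relevant scales satisfy $j_n\le C$, and the threshold $n^{-1/4-r/2}$ collapses to $n^{-1/2}$.

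For the first equivalence I translate the size condition. If $\sqrt n|\theta_{nj_ni_n}|>c$ at some scale $j_n\le C$, then $\|g_n\|_\infty\ge c'>0$; after fixing the sign so the peak is positive at a point $t_n$, I split into two cases. If $t_n$ stays in a compact subinterval of $(0,1)$, Theorem \ref{tdf2} already gives consistency for every $\alpha$. If $t_n$ is forced towards an endpoint (which for periodic wavelets happens precisely when the support of $\phi_{j_ni_n}$ straddles $\{0,1\}$), then $\mathrm{Var}\,U_n(t_n)=t_n(1-t_n)\to0$, the bridge is pinned there, and $U_n(t_n)+g_n(t_n)$ converges in probability to a limit $\ge c'$; hence $\mathbf P(\sup_x|U_n+g_n|>q_{1-\alpha})\to1$ whenever $q_{1-\alpha}<c'$, i.e. for all $\alpha>\alpha_0(c)$. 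In both cases $\limsup_n\beta(K_n,f_n)<1-\alpha$ for $\alpha_0<\alpha<1$. Conversely, if for every constant $C$ one has $\max_{j\le C,i}\sqrt n|\theta_{nji}|\to0$, then the two facts above yield $\sup_x|g_n|\to0$, so $\sup_x|U_n+g_n|$ and $\sup_x|U_n|$ share the weak limit, the power tends to $\alpha$ for every $\alpha$, and the sequence is inconsistent. This is the necessity half of the first equivalence and it also reproduces Theorem \ref{tdf4}.

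The second equivalence is the location statement. If the critical coefficient can be chosen with $e_1<i_n2^{-j_n}<e_2$ for fixed $0<e_1<e_2<1$, the peak of $g_n$ lies in a compact subinterval of $(0,1)$ and, once the bounded interference of the remaining coefficients is absorbed, $\sqrt n\max_{e_1\le x\le e_2}|F_n(x)-F_0(x)|\ge a>0$; thus $F_n\in\Upsilon_n(e_1,e_2,a)$, and the uniform consistency of $\Upsilon_n(e_1,e_2,a)$ from Theorem \ref{tdf2} forces consistency of $f_n$ for every $\alpha$. For the converse, suppose that along a subsequence every coefficient with $\sqrt n|\theta_{nj_ni_n}|>c$ has its location $i_n2^{-j_n}$ approaching an endpoint, so that the deviation of $F_n-F_0$ concentrates near $\{0,1\}$. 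There $U_n$ is pinned, the bounded-height bump is seen deterministically, and for $\alpha$ so small that $q_{1-\alpha}$ exceeds the bump height the bump contributes nothing beyond the null excursion: the power tends to $\alpha$, exactly the asymptotically unbiased but inconsistent regime isolated by Massey for $\Theta_n(0,1,a)$. Hence consistency for all $\alpha$ fails unless an interior critical coefficient exists, which is the asserted condition.

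The delicate step is the endpoint direction of the second equivalence. The statement $\mathrm{Var}\,U_n(x)=x(1-x)\to0$ is only a pointwise variance bound, whereas at a bounded scale the primitive $\psi_{ji}$ of a boundary-straddling coefficient still has $O(1)$ width and therefore leaks a nonvanishing amount of mass into the interior, where the bridge is not pinned. I must show that this interior leakage, together with the accumulated interference of all remaining coefficients, cannot raise $\sup_x|U_n+g_n|$ above a large threshold with more than vanishing extra probability; soft uniform continuity of the bridge is insufficient, and one has to localize the bridge quantitatively near the endpoints through the law $\mathrm{Var}\,U_n(x)=x(1-x)$ and control the excursion produced by a bounded periodic-boundary tent. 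This is the same obstruction that forces the use of Massey's explicit construction rather than a purely qualitative argument, and it is the main place where the proof must do genuine work beyond transcribing the scheme of Theorem \ref{tp1}.
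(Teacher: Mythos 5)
Your architecture is the one the paper itself intends: Theorem \ref{tco1} is stated with its proof omitted as ``akin to proof of Theorem \ref{tp1}'', and that proof has exactly the shape you describe --- discard the fine scales, reduce to the finitely many bounded scales (your observation that $k_n=0$ and the threshold becomes $n^{-1/2}$ when $r=1/2$ is the right degeneration), then settle consistency by Theorem \ref{tdf2} when the active coefficient is interior, by Massey's example when its location drifts to $0$ or $1$, and by Theorem \ref{tdf4} when all bounded-scale coefficients are $o(n^{-1/2})$ (your diagonal argument in $L$ for this last direction is correct). The genuine gap is in the sufficiency directions, at the sentences ``if $\sqrt n|\theta_{nj_ni_n}|>c$ at some scale $j_n\le C$, then $\|g_n\|_\infty\ge c'>0$'' and ``once the bounded interference of the remaining coefficients is absorbed''. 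Neither of your two ``elementary facts'' (tail negligibility and bump height) yields these lower bounds. All the other coefficients at scales $\le L$ are permitted to be of the same order $n^{-1/2}$, there are infinitely many of them below any cutoff, and a priori nothing prevents their primitives from cancelling the bump $\sqrt n\,\theta_{nj_ni_n}\psi_{j_ni_n}$ in sup norm; note there is no cheap orthogonality obstruction, since $\int_0^1\psi_{ji}\phi_{ji}\,dx=\tfrac12\bigl(\psi_{ji}^2(1)-\psi_{ji}^2(0)\bigr)=0$, so a wavelet primitive is orthogonal to the wavelet itself. If such cancellation could occur, a sequence satisfying your coefficient condition would have $\sqrt n\,T(F_n)\to 0$ and would be \emph{inconsistent} by Theorem \ref{tdf4}, i.e.\ the theorem would be false: non-cancellation is exactly where the statement can break, and it must be proved, not assumed. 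The paper supplies precisely this ingredient as Lemma \ref{l1} (no uniform approximation of a wavelet primitive by primitives of the remaining basis functions), proved by a nontrivial spectral argument for the integration operator, and the proof of Theorem \ref{tp1} invokes it at every point where a coefficient bound is converted into a bound on $T(F_n)$ --- the point your proposal glides over.

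The gap is fillable, but not by anything written in your text. For instance, integration by parts gives $|\theta_{nji}|=|\langle F_n-F_0,\phi_{ji}'\rangle|\le C\,2^{j/2}\|F_n-F_0\|_\infty$, with constant the $\mathbb{L}_1$-norm of $\phi'$, the boundary terms vanishing because $F_n(0)=F_0(0)$ and $F_n(1)=F_0(1)$; this yields $\sqrt n\,T(F_n)\ge c\,2^{-C/2}/C$ at once, and restricting the pairing to the support of $\phi_{j_ni_n}$ gives the localized statement needed before Theorem \ref{tdf2} can be applied in the interior case (with some care, since at bounded scales a periodized wavelet's support has width of order one, so interior location of the index $i_n2^{-j_n}$ does not by itself confine the deviation to a compact subinterval --- another detail your case split assumes silently). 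Finally, for the endpoint direction of the second equivalence you correctly identify the quantitative obstruction but then defer to ``Massey's explicit construction'' without carrying it out; the paper does the same (it only cites Massey), so this is a shared, citation-level gap rather than a defect peculiar to your write-up, but as written that direction of your proof is incomplete as well.
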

 Thus problem of consistency for alternatives approaching with hypothesis with rate $n^{-1/2}$ is reduced to  finite parametric problem of testing hypothesis such that sets of alternatives of this parametric problem contains  most informative part of orthogonal expansions  of alternatives   $f_n$.

Proof of Theorem \ref{tco1} is akin to proof of Theorem \ref{tp1} and is omitted.

Theorem \ref{tco2} given below shows that, for this setup, the linear space
\begin{equation*}
\Xi = \Bigl\{f \,:\, f = \sum_{j=1}^m \sum_{i=1}^{2^j} \theta_{ji} \phi_{ji},\,  \theta_{ji} \in \mathbb{R}^1, m =1,2,\ldots\,\Bigr\}
\end{equation*}
satisfies {\it ii} in definition of maxisets in previous setup.

Sets
\begin{equation*}
U(m, P_0) = \,\Bigl\{f \,:\, f = \sum_{j=1}^m \sum_{i=1}^{2^j} \theta_{ji} \phi_{ji},\, \|f\|_\infty < P_0,\,  \theta_{ji} \in \mathbb{R}^1\Bigr\}.
\end{equation*}
satisfies {\it i.} in definition  of maxisets of previous setup.

Theorem \ref{tco2} given below is akin to Theorem \ref{tp2}.
\begin{theorem} \label{tco2}  If there are $m$ and $P_0 > 0$ such that $f_n \in U(m,P_0)$, $\|f_n\|_\infty \asymp n^{-1/2}$, then sequence of alternatives $f_n$ is consistent.

 Let $f = \sum_{j=1}^\infty \sum_{i=1}^{2^j} \theta_{ji} \phi_{ji} \notin \Xi$ and $f$ satisfy A.  Then there are sequence $m_n \to \infty$ and subsequence $l_{m_n} \to \infty$ as $n \to \infty$ such that, for functions   $f_{l_{m_n}}   =  \sum_{j=m_n}^\infty \sum_{i=1}^{2^j} \theta_{ji} \phi_{ji}$, there hold $\|f_{l_{m_n}}\|_\infty \asymp l_{m_n}^{-1/2}$ as $n \to \infty$ and subsequence of alternatives $f_{l_{m_n}}$ is inconsistent for subsequence of samples $X_1, \ldots, X_{l_{m_n}}$.
\end{theorem}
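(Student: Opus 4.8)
Theorem \ref{tco2} has two parts. For the first part I would argue that membership in $U(m,P_0)$ confines the alternative to a fixed finite-dimensional space spanned by $\{\phi_{ji}: 1\le j\le m,\, 1\le i\le 2^j\}$, so the problem reduces to a finite-parameter testing problem exactly as described after Theorem \ref{tco1}. The plan is to invoke Theorem \ref{tco1}: since $f_n\in U(m,P_0)$ and $\|f_n\|_\infty\asymp n^{-1/2}$, there must be some $(j,i)$ with $j\le m$ and $n^{1/2}|\theta_{nji}|$ bounded below by a positive constant (otherwise $\|f_n\|_\infty=o(n^{-1/2})$, because a bounded number of wavelet coefficients controls the sup-norm up to constants on this finite-dimensional space). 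Moreover, because $j\le m$ is bounded, the dyadic locations $i2^{-j}$ lie in a fixed finite set of interior points of $(0,1)$, so one can always extract $e_1,e_2$ with $0<e_1<i_n2^{-j_n}<e_2<1$; by the consistency criterion of Theorem \ref{tco1} the sequence is therefore consistent. The main thing to verify here is the equivalence of $\|\cdot\|_\infty$ and the largest rescaled coefficient on the fixed finite-dimensional space $U(m,P_0)$, which follows from norm equivalence in finite dimensions together with boundedness of the basis functions $\phi_{ji}$.

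For the second part, the goal is to show that any $f\notin\Xi$ satisfying condition A generates, via its high-frequency tails, an inconsistent subsequence at the prescribed rate. Since $f=\sum_{j}\sum_i\theta_{ji}\phi_{ji}\notin\Xi$, infinitely many blocks $j$ carry nonzero coefficients, so the tails $f_{l}=\sum_{j\ge l}\sum_i\theta_{ji}\phi_{ji}$ are nonzero for arbitrarily large $l$. First I would use condition A to guarantee that, for the chosen indices, the truncated tails $1+f_{l_{m_n}}$ are genuine densities, so they are admissible alternatives. Next I would calibrate the sample sizes: I want to choose $l_{m_n}$ and a matching subsequence of sample sizes so that $\|f_{l_{m_n}}\|_\infty\asymp l_{m_n}^{-1/2}$. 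The idea is that as $l\to\infty$ the tail sup-norm $\|f_l\|_\infty$ decreases to zero along a subsequence, and one tunes the sample size $l_{m_n}$ to the magnitude of this tail so that the normalizing relation $\|f_{l_{m_n}}\|_\infty\asymp l_{m_n}^{-1/2}$ holds; this is the analogue of the index-selection in the maxiset definition {\it ii.} and in Theorems \ref{tp1}--\ref{tp2}.

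The inconsistency claim is where Theorem \ref{tco1} (its contrapositive/consistency criterion) does the real work. For a tail $f_{l}$ all active frequency blocks satisfy $j\ge l\to\infty$, so the "frequency index" $j_n$ cannot stay bounded. By the first characterization in Theorem \ref{tco1}, $\alpha$-consistency at rate $n^{-1/2}$ requires a bounded frequency $j_n\le C$ carrying a coefficient with $n^{1/2}|\theta_{nj_ni_n}|$ bounded below; since every nonzero coefficient of $f_{l_{m_n}}$ sits at frequency $\ge l_{m_n}\to\infty$, no such bounded-frequency coefficient exists, and the subsequence $f_{l_{m_n}}$ is inconsistent for the matched samples $X_1,\dots,X_{l_{m_n}}$. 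This mirrors the proof of Theorem \ref{tp2}, and indeed the excerpt signals that Theorem \ref{tco2} "is akin to Theorem \ref{tp2}."

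\medskip

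\noindent\textbf{Main obstacle.} The delicate step is the simultaneous choice of the truncation level $l_{m_n}$, the location-index subsequence, and the matched sample sizes so that the rate relation $\|f_{l_{m_n}}\|_\infty\asymp l_{m_n}^{-1/2}$ holds \emph{while} all the active frequencies of the tail diverge; one must ensure the tail sup-norm does not decay too fast (so that a valid rate $r=1/2$ calibration exists) yet the supporting frequencies still escape to infinity. Controlling $\|f_l\|_\infty$ from below along the subsequence—so that it is genuinely comparable to $l_{m_n}^{-1/2}$ and not asymptotically negligible—is the real analytic content, and it is exactly the place where the hypothesis $f\notin\Xi$ (forcing infinitely many nonvanishing high-frequency blocks) is indispensable.
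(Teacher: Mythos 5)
Your treatment of the first statement is fine and is what the paper intends (the paper proves only the second statement), so the comparison below concerns part two. There your skeleton is right, but the step you explicitly defer as the ``main obstacle'' --- producing $m_n$ and $l_{m_n}$ with $\|f_{l_{m_n}}\|_\infty \asymp l_{m_n}^{-1/2}$ --- is precisely the entire content of the paper's proof, and you leave it unresolved. The paper's construction is concrete: choose $m_n \to \infty$ and $i_{m_n}$ so that $|\theta_{m_n i_{m_n}}| > (1-\delta)\sup\{|\theta_{ji}| : j \ge m_n,\ 1 \le i \le 2^j\}$ with $0<\delta<1/2$ (such blocks exist for arbitrarily large $m_n$ exactly because $f \notin \Xi$), and then \emph{define} the sample sizes by $l_{m_n}^{-1/2} \asymp 2^{m_n/2}|\theta_{m_n i_{m_n}}|$ (the paper writes $2^{m_n}|\theta_{m_n i_{m_n}}|$, evidently a typo, since only the $2^{m_n/2}$ calibration is compatible with the claimed $\|f_{l_{m_n}}\|_\infty \asymp l_{m_n}^{-1/2}$). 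Lemma \ref{l1} (no cancellation between wavelet terms) then gives the two-sided bound $\|f_{l_{m_n}}\|_\infty \asymp 2^{m_n/2}|\theta_{m_n i_{m_n}}|$, which is what makes the calibration legitimate. Your substitute assertion --- ``as $l\to\infty$ the tail sup-norm decreases to zero along a subsequence'' --- is not automatic for $f \in \mathbb{L}_\infty(0,1)$ (wavelet tails of a bounded function need not tend to zero in sup-norm), so stating it without the coefficient-level construction is a genuine gap, not a simplification.

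Your inconsistency step also takes a different and shakier route than the paper's. The paper never invokes Theorem \ref{tco1} here: since every frequency active in the tail is $\ge m_n$, integration gains a factor $2^{-j} \le 2^{-m_n}$ (because $\|\psi_{ji}\|_\infty \asymp 2^{-j/2}$ while $\|\phi_{ji}\|_\infty \asymp 2^{j/2}$), and the near-maximality of $\theta_{m_n i_{m_n}}$ turns this into $T(F_{l_{m_n}}) \asymp 2^{-m_n} l_{m_n}^{-1/2} = o(l_{m_n}^{-1/2})$; Theorem \ref{tdf4} (the DKW-based criterion) then yields inconsistency outright --- for every $\alpha \in (0,1)$ and along the subsequence, with no further argument. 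Your route through the contrapositive of Theorem \ref{tco1} needs two things that theorem does not literally supply: (i) validity for a subsequence of sample sizes (Theorem \ref{tco1} is stated for full sequences, and embedding your subsequence into a full sequence does not transfer non-consistency of the full sequence back to the subsequence, since the defining $\limsup$ may be realized on the filled-in indices); and (ii) failure of $\alpha$-consistency for \emph{all} $\alpha \in (0,1)$, whereas the equivalence in Theorem \ref{tco1} covers only $\alpha_0 < \alpha < 1$, while ``inconsistent'' in the paper's definition quantifies over all $\alpha$. Both points can likely be patched (the proof of Theorem \ref{tco1}, like that of Theorem \ref{tp1}, ultimately rests on the same DKW bound the paper uses directly), but as written they constitute a second gap; the paper's direct argument via Theorem \ref{tdf4} avoids them entirely.
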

\begin{theorem} \label{tco3} Let $r = 1/2$ and assumption G is omitted. Then statements of Theorems \ref{tp2a} and  \ref{tp3} hold if maxisets  $\mathbb{B}^s_{\infty,\infty}(P_0)$ are replaced with sets $U(m.P_0)$.
\end{theorem}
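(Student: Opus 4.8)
The plan is to re-run the proofs of Theorems \ref{tp2a} and \ref{tp3} almost verbatim, replacing every appeal to the Besov machinery by its $r=1/2$ counterpart: Theorem \ref{tco1} takes the place of Theorem \ref{tp1}, Theorem \ref{tco2} the place of Theorem \ref{tp2}, and the finite-dimensional family $U(m,P_0)$ the place of the Besov body $\mathbb{B}^s_{\infty,\infty}(P_0)$. The structural reason the substitution is clean is that for $r=1/2$ one has $k_n=[(1/2-r)\log n]=0$: by Theorem \ref{tco1} every $\alpha$-consistent sequence already detects through a wavelet coefficient sitting at a bounded resolution level $j_n\le C$. Hence the significant part of every alternative lives in a fixed finite block of levels, which is exactly what $U(m,P_0)$ collects, and there is no growing low-frequency block whose contribution to the Kolmogorov distance assumption G was designed to suppress; this is why G can be dropped.

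I would treat the analogue of Theorem \ref{tp2a} first. For the forward implication, let $f_n$ be $\alpha$-consistent. Theorem \ref{tco1} furnishes a constant $C$ and indices $j_n\le C$, $1\le i_n\le 2^{j_n}$ with $n^{1/2}|\theta_{nj_ni_n}|>c$. Put $m=C$ and take $f_{1n}=\sum_{j=1}^{m}\sum_{i=1}^{2^j}\theta_{nji}\phi_{ji}$, the truncation of $f_n$ to the first $m$ levels. Since only finitely many levels are involved and every retained coefficient is $O(n^{-1/2})$, we get $f_{1n}\in U(m,P_0)$ for a suitable $P_0$ and large $n$; the significant coefficient is retained, so $\|f_{1n}\|_\infty\asymp n^{-1/2}$; and because the coefficients of $f_{1n}$ and of $f_n-f_{1n}$ are each either a coefficient of $f_n$ or zero, the three functions $f_n$, $f_{1n}$, $f_n-f_{1n}$ share orientation by construction. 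For the reverse implication, any $f_{1n}\in U(m,P_0)$ with $\|f_{1n}\|_\infty\asymp n^{-1/2}$ is consistent by Theorem \ref{tco2}, and I would transfer this to $f_n$ by the same-orientation reinforcement argument already invoked in the passage following Theorem \ref{tp2a}: adding the common-orientation component $f_n-f_{1n}$ does not lower the relevant value of the Kolmogorov functional.

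For the analogue of Theorem \ref{tp3}, under assumption A (which keeps the truncations densities) I would produce $f_{1n}\in U(m,P_0)$ by the same selection and damping device as in the proof of Theorem \ref{tp3}, the common-orientation requirement forcing $\theta_{1n,ji}=\lambda_{ji}\theta_{nji}$ with $\lambda_{ji}\in[0,1]$ and $\lambda_{ji}=0$ for $j>m$. The estimates (\ref{d1}) and (\ref{d2}) would then be read off from the Brownian-bridge coupling of Theorem \ref{tdf3}: writing $\sqrt n(\hat F_n(x)-F_0(x))\approx b_n(x)+\sqrt n\sum_{ji}\theta_{nji}\psi_{ji}(x)$ with $\psi_{ji}=\int_0^{x}\phi_{ji}$, the type II error of each of $f_n$, $f_{1n}$, $f_n-f_{1n}$ becomes a sup-functional of a Brownian bridge shifted by the corresponding deterministic drift, and (\ref{d1}), (\ref{d2}) are comparisons between these shifted functionals that exploit the common sign structure.

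The step I expect to be the main obstacle is exactly these comparisons of type II error probabilities under common orientation. Because the integrated wavelets $\psi_{ji}$ overlap and are not sign-definite, equality of signs of the coefficients $\theta_{ji}$ does not translate automatically into monotonicity of $\sup_x|b_n(x)+\sqrt n\sum\theta_{nji}\psi_{ji}(x)|$; one must localize near the bounded-level index $(j_n,i_n)$ carrying the significant coefficient, use that at a fixed resolution the level-$j$ bumps $\psi_{ji}$ are essentially non-overlapping, and show that the same-sign terms reinforce rather than cancel the drift at the maximizing point. This localization, available precisely because $k_n=0$ pins the signal to bounded levels, is where the genuine work lies; the remainder of the argument is a transcription of the $0<r<1/2$ proofs with $U(m,P_0)$ in place of the Besov body.
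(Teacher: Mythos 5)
Your overall plan---substitute Theorem \ref{tco1} for Theorem \ref{tp1}, Theorem \ref{tco2} for Theorem \ref{tp2}, and $U(m,P_0)$ for $\mathbb{B}^s_{\infty,\infty}(P_0)$, with assumption G droppable because $k_n=[(1/2-r)\log n]=0$ pins the significant coefficients to bounded levels---is exactly the transcription the paper intends when it says the proof is akin to those of Theorems \ref{tp2a} and \ref{tp3}. Your treatment of the analogue of Theorem \ref{tp2a} is essentially correct and matches the paper's argument: truncate at a fixed level $m$, note that membership in $U(m,P_0)$ and the orientation condition are automatic for a truncation (each coefficient of $f_{1n}$ and of $f_n-f_{1n}$ is either a coefficient of $f_n$ or zero), and run the reverse implication through the coefficient criterion of Theorem \ref{tco1}, since common orientation gives $|\theta_{n,ji}|\ge|\theta_{1n,ji}|$ coefficientwise.

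The gap is in your analogue of Theorem \ref{tp3}. You route the estimates (\ref{d1}) and (\ref{d2}) through a comparison of sup-functionals of shifted Brownian bridges ``under common orientation,'' you correctly observe that such a monotonicity statement is false in general (the $\psi_{ji}$ overlap and are not sign-definite), you propose to repair it by a localization-and-reinforcement argument at the maximizing point, and you leave that argument unexecuted, calling it the place ``where the genuine work lies.'' No such argument is needed, and the step as you framed it is not the right one: in the paper's proof of Theorem \ref{tp3}, part {\it ii.} uses no sign structure at all. It rests on two ingredients that survive verbatim at $r=1/2$: first, the tail estimate of type (\ref{l23}), which for $r=1/2$ reads $n^{1/2}T(\tilde F_{nm})\le C n^{1/2}2^{-m}\|f_n\|_\infty\le C 2^{-m}<\delta$ for $m$ large, uniformly in $n$ --- this is precisely where $n^{1/2-r}=1$ makes G superfluous, the fact you identified but did not exploit; second, Lemma \ref{l0}, which states that perturbing the drift of the Brownian bridge by less than $\delta$ in sup-norm changes the acceptance probability by at most $C\delta$. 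Applying Lemma \ref{l0} (via the coupling of Theorem \ref{tdf3}) with the drifts of $F_n$ and of $\bar F_{nm}=F_{1n}$ gives (\ref{d1}); applying it with the drift of the tail against the zero drift gives (\ref{d2}). Orientation enters part {\it i.} only in the trivial combinatorial sense noted above. As written, your proposal therefore leaves part {\it ii.} unproven; the fix is to abandon the localization program and invoke Lemma \ref{l0} together with the $r=1/2$ tail bound.
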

Proof of Theorem \ref{tco3} is akin to proof of Theorems  \ref{tp2a} and \ref{tp3} and is omitted.
\section{Appendix}
\subsection{Proof of Theorem \ref{tdf2}}
Denote $b(t)$, $t \in [0,1]$, Brownian bridge, $\mathbf{E} b (t) = 0$ and $\mathbf{E} [b(t)\, b(s)] = min(t,s) - s t$, $t,s \in [0,1]$.

We say that real function $u (t)$, $t \in (0,1)$, is admissible shift if $u (t)$ is differentiable and
\begin{equation*}
\int_0^1 \Bigl(\frac{du(t)}{dt}\Bigr)^2\, dt < \infty, \quad u(0) = u(1) = 0.
\end{equation*}
\begin{proposition} \label{tdf21} Let $u$  be admissible shift and $\int_0^1 (du(t)/dt)^2\, dt > 0$.
Then, for any $c > 0$, we have
\begin{equation} \label{dt1}
\mathbf{P}(\,\max_{t \in [0,1]}\,|b(t)| < c) > \mathbf{P}(\,\max_{t \in [0,1]}\,|b(t)\, + \, u(t)| < c)
\end{equation}
\end{proposition}
\begin{proposition} \label{pro2} For any $c>0$ we have
\begin{equation}\label{fpro2}
\mathbf{P}(\max_{t \in [0,1]} |b(t)| < c) > \sup_{u \in \Upsilon(e_1,e_2,a)} \mathbf{P}(\max_{t \in [0,1]} |b(t) +u(t)| < c).
\end{equation}
\end{proposition}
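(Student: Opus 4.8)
The plan is to deduce (\ref{fpro2}) from the single-drift comparison of Proposition \ref{tdf21}, upgrading the pointwise strict inequality there into a strict inequality for the supremum by a compactness argument over the fixed window $[e_1,e_2]$. I read $u \in \Upsilon(e_1,e_2,a)$ as a bounded function on $[0,1]$ with $u(0)=u(1)=0$ and $\max_{e_1 \le t \le e_2}|u(t)| > a$; since $a>0$ this forces $\int_0^1 u^2(t)\,dt>0$, so Proposition \ref{tdf21} applies with $\xi=b$ and gives, for every fixed such $u$, the strict inequality $\mathbf{P}(\max_{t\in[0,1]}|b(t)|<c) > \mathbf{P}(\max_{t\in[0,1]}|b(t)+u(t)|<c)$. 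The whole point is that this pointwise gap must be shown to stay bounded away from zero as $u$ ranges over the infinite family $\Upsilon(e_1,e_2,a)$.

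First I would reduce the effect of the drift to a single coordinate. For each $u$ pick a witnessing point $t_0=t_0(u)\in[e_1,e_2]$ with $|u(t_0)|>a$; replacing $u$ by $-u$ if needed and using the symmetry $b \stackrel{d}{=} -b$ (so that $\mathbf{P}(\max|b+u|<c)=\mathbf{P}(\max|b-u|<c)$), I may assume $u(t_0)>a$. Since $t_0\in(0,1)$, the marginal $b(t_0)$ is nondegenerate Gaussian with variance $t_0(1-t_0)>0$. Conditioning on $b(t_0)=y$ and writing $b(t)=b(t_0)g(t)+b^\circ(t)$ with $g(t)=\mathbf{E}[b(t)b(t_0)]/\mathbf{E}[b(t_0)^2]$, $g(t_0)=1$, and $b^\circ$ the centered process pinned at $t_0$ and independent of $b(t_0)$, the event $\{\max_{[0,1]}|b+u|<c\}$ imposes the full tube constraint along $[0,1]$ together with the one-dimensional constraint $|b(t_0)+u(t_0)|<c$ at $t=t_0$, which because $u(t_0)>a$ confines $b(t_0)$ to $(-c-u(t_0),\,c-u(t_0))\subset(-\infty,\,c-a)$.

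With this decomposition the type II probability becomes an integral over $y=b(t_0)$ of the conditional tube probability weighted by the $N(0,t_0(1-t_0))$ density, against which the drift acts as a displacement of the weight by $u(t_0)>a$. Using the log-concavity (Pr\'ekopa--Leindler) of the conditional tube probability in the conditioning level together with Proposition \ref{tdf21}, I would establish the bound $\mathbf{P}(\max_{[0,1]}|b+u|<c)\le \mathbf{P}(\max_{[0,1]}|b|<c)-\delta(t_0)$, where $\delta(t_0):=\mathbf{P}\bigl(\max_{[0,1]}|b|<c,\ b(t_0)\ge c-a\bigr)$ is the mass stripped off by forcing $b(t_0)<c-a$; this $\delta(t_0)$ is strictly positive because $c-a<c$ and $b(t_0)$ has a positive density on $(c-a,c)$ along which the path can still stay in the tube. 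One checks that the concentrated (spike) drifts make this bound an equality in the limit, so it is sharp.

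Finally I would make the gap uniform: the map $t_0\mapsto\delta(t_0)$ is continuous and strictly positive on the compact interval $[e_1,e_2]$, hence $\delta_0:=\min_{t_0\in[e_1,e_2]}\delta(t_0)>0$, and since every $u\in\Upsilon(e_1,e_2,a)$ has its witnessing point in $[e_1,e_2]$, taking the supremum gives $\sup_{u}\mathbf{P}(\max_{[0,1]}|b+u|<c)\le \mathbf{P}(\max_{[0,1]}|b|<c)-\delta_0$, which is (\ref{fpro2}). I expect the main obstacle to be precisely the third step. Unlike the $\mathbb{L}_2$-type statements, here the drifts are controlled only in sup-norm and may concentrate near a single point, so the Anderson gap of Proposition \ref{tdf21} can degenerate; the delicate part is to show rigorously that the worst case is the concentrated drift, i.e. that the entire effect of $u$ can be transferred onto the single nondegenerate marginal $b(t_0)$, and only then does compactness of $[e_1,e_2]$ turn the pointwise strictness into a uniform one.
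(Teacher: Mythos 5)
Your reduction of the whole drift $u$ to a single marginal constraint at the witnessing point $t_0$ is the step that fails, and it is not a repairable technicality: the key inequality of your third paragraph,
\begin{equation*}
\mathbf{P}\Bigl(\max_{t\in[0,1]}|b(t)+u(t)|<c\Bigr)\;\le\;\mathbf{P}\Bigl(\max_{t\in[0,1]}|b(t)|<c\Bigr)-\delta(t_0)\;=\;\mathbf{P}\Bigl(\max_{t\in[0,1]}|b(t)|<c,\ b(t_0)<c-a\Bigr),
\end{equation*}
is false in general. Take $a\ge 2c$ (the Proposition is asserted for every $c>0$ with $\Upsilon(e_1,e_2,a)$ fixed, so this regime must be covered). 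On $\{\max_t|b(t)|<c\}$ one has $b(t_0)>-c\ge c-a$, so the right-hand side is zero. But the left-hand side can be positive: let $g(t)=\bigl(\min(t,t_0)-t\,t_0\bigr)/\bigl(t_0(1-t_0)\bigr)$, so that $g(t_0)=1$, $g(0)=g(1)=0$, and put $u=(a+1)g\in\Upsilon(e_1,e_2,a)$; the set $\{x\in C[0,1]:x(0)=x(1)=0,\ \max_t|x(t)+u(t)|<c\}$ is open and nonempty (it contains $-u$), and the Brownian bridge has full support in sup-norm, so $\mathbf{P}(\max_t|b+u|<c)>0$. The conceptual error is precisely your ``sharpness'' claim that the concentrated spike is the worst case: a spike keeps the symmetric tube $\{|b|<c\}$ and merely removes the one-sided corner $\{b(t_0)\ge c-a\}$, whereas a spread-out drift such as $\lambda g$ displaces the whole tube coherently at only a Cameron--Martin cost, and can retain strictly more probability than the spike-constrained event (for $a\ge 2c$ the ``corner'' is the entire tube, and a small-ball computation indicates the same failure even for small $a$ once $c^2$ is small relative to $t_0(1-t_0)$, so one cannot escape by shrinking $a$ via monotonicity of $\Upsilon$ in $a$). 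Nor can the tools you name close the gap: Anderson's theorem does not apply because $\{x:\max|x|<c,\ x(t_0)<c-a\}$ is convex but not symmetric, and Pr\'ekopa--Leindler log-concavity gives no comparison between a drifted tube probability and a one-sidedly truncated undrifted one. You flagged this step yourself as the main obstacle; it is not merely unproven, it is false.

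For contrast, the paper's proof never makes a marginal-domination claim. It argues by contradiction along a maximizing sequence $u_n\in\Upsilon(e_1,e_2,a)$: each $u_n$ is discretized by dyadic upper and lower envelopes, pointwise limits $v$ and $w$ with $v(t_0)\ge a$ at some $t_0\in[e_1,e_2]$ are extracted by monotonicity in the dyadic level and a diagonal (Helly-type) selection, and the limiting probability is sandwiched between the probabilities for $v,w$ (Lemma \ref{love}). If $|v|>0$ on a set of positive Lebesgue measure, Proposition \ref{tdf21} combined with the continuity of the distribution of $\max_t|b(t)|$ yields a strict gap, a contradiction. In the concentration regime---$v\neq 0$ only on a null set, exactly the regime your argument mishandles---the paper constructs an explicit event $\Omega_U$, on which the bridge stays in the tube while remaining uniformly far from zero on a whole neighborhood $U$ of $t_0$; this event has positive probability, is contained in $\{\max_t|b(t)|<c\}$, and is disjoint from $\{\max_t|b(t)+v(t)|<c\}$. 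That pathwise, neighborhood-based construction is what correctly replaces your single-point reduction.
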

By compactness  reasoning, Proposition \ref{pro2} follows from Proposition \ref{tdf21}. 
\begin{proof}[Proof of Proposition \ref{tdf21}]Let $\xib$ be Gaussian random vector in $\mathbb{R}^d$, $\mathbf{E} [\xib] = \0b$, $E[\xib \,\xib^T] = R$, where $R$ is positive definite matrix. Let $U$ be center-symmetric convex set in $\mathbb{R}^d$. Let $\ub \in \mathbb{R}^d$. By Andersen Theorem, we have
\begin{equation}\label{etdfq11}
 \mathbf{P}\, (\xib \in \,\ub\, + \, U\,) \le \mathbf{P}\, (\xib \in \, U\,).
\end{equation}
By Remark 2 in \cite{zal}, the equality in (\ref{etdfq11}) is attained only if set $U$ is cylindrical in direction $\ub$, that is, if $\xb \in U$ then $\xb\,+ \lambda \ub \in U$ for all $\lambda \in \mathbb{R}^1$. Now we extend reasoning of Remark 2 in \cite{zal} on setup of Proposition \ref{tdf21}.

Further reasoning are akin to Remark 2 in \cite{zal}.

For $\lambda \in \mathbb{R}^1$ denote
\begin{equation*}
I(\lambda) = \mathbf{P}(\,\max_{t \in [0,1]}\,|b(t)\, + \lambda \, u(t)| < c).
\end{equation*}
Denote
\begin{equation*}
\sigma^2 = \int_0^1 \Bigl(\frac{d u (t)}{dt} \Bigr)^2 \,d\,t
\end{equation*}
Similarly to \cite{zal}, by Cameron-Martin Theorem, for proof of Proposition \ref{tdf21}, it suffices to show that, in some vicinity of $\lambda=0$, we have
\begin{equation}\label{eqd}
 \frac{d^2 I(\lambda)}{d \lambda^2} = \mathbf{E} \Bigl[\Bigl(\Bigl(\int_0^1 \frac{d u (t)}{dt} \, d\,b(t)\Bigr)^2 - \sigma^2\Bigr) \mathbf{1}_{\{\max_{t \in\, [a,b]}\,|b(t)|\,\, <\, c\}} \Bigr] < 0.
\end{equation}
Denote $\mu(c,x)$ conditional probability of $\max_{t \in\, [a,b]}\,|b(t)| < c$ given $\int_0^1 \frac{d u (t)}{dt} \,\, d\,b(t) = x$.

Then we have
\begin{equation}\label{eqd1}
\frac{d^2 I(\lambda)}{d \lambda^2} = \frac{1}{\sqrt{2\pi}\sigma}\int_{-\infty}^\infty (x^2 - \sigma^2)\, \exp \{- x^2/(2\sigma^2)\}\, \mu(c,x)\,  dx
\end{equation}
With each value of $ d \in [\sigma,\infty)$ we associate value $e \in [0,\sigma]$ such that
\begin{equation}\label{eqd2}
\int_\sigma^d (x^2 - \sigma^2) \exp\{-x^2/(2\sigma^2)\} dx = - \int_e^\sigma (x^2 - \sigma^2) \exp\{-x^2/(2\sigma^2)\} dx .
\end{equation}
Applying Ehrhard inequality we  get  $\mu(c,d) \le \mu(c,e)$. Hence, using
\begin{equation}\label{eqd3}
\int_{-\infty}^\infty (x^2 -1) \exp\{-x^2/2\} dx =0,
\end{equation}
we get (\ref{dt1})
\end{proof}
\subsection{Proof of Theorems on uniform consistency of sets of alternatives defined in terms of densities }
Denote $\psi(x) = \int_0^x \phi  (t) d\,t$, $0 \le x \le 1$.
\begin{lemma}\label{l1} There is no sequence of functions
$$
f_n =  \sum_{j=2}^\infty \sum_{i=1}^{2^j} \theta_{nji} \phi_{ji}, \quad \|f_n \| < C,
$$
such that there holds
\begin{equation}\label{l11}
\lim_{n \to \infty} \sup_{0 \le x \le 1} \left|  \psi  (x) - F_n(x) \right| = 0,
\end{equation}
where $F_n(x) = \int_0^x f_n(t)\,d\, t$.
\end{lemma}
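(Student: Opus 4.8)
The plan is to reach a contradiction by testing $f_n$ against the mother wavelet $\phi$ itself, using the cross-scale orthogonality of the wavelet system together with two integrations by parts whose boundary terms vanish because the periodic wavelets have zero mean. First I would record that $\phi$, the level-zero wavelet, is orthogonal in $\mathbb{L}_2(0,1)$ to every $\phi_{ji}$ with $j \ge 1$, hence to all of those entering $f_n$. Since $\|f_n\| < C$ forces $f_n \in \mathbb{L}_2(0,1)$, the defining series converges in $\mathbb{L}_2$ and the inner product may be taken term by term:
\[
\int_0^1 \phi(x) f_n(x)\,d\,x = \sum_{j=2}^\infty \sum_{i=1}^{2^j} \theta_{nji} \int_0^1 \phi(x)\phi_{ji}(x)\,d\,x = 0
\]
for every $n$.

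Next I would transfer this identity to $F_n$. As $F_n(x)=\int_0^x f_n$, the function $F_n$ is absolutely continuous with $F_n'=f_n$, and it satisfies $F_n(0)=0$ together with $F_n(1)=\int_0^1 f_n=0$, the latter because each $\phi_{ji}$ integrates to zero on $(0,1)$. Since $\phi$ is smooth, integrating by parts gives
\[
0 = \int_0^1 \phi\, f_n\,d\,x = \bigl[\phi F_n\bigr]_0^1 - \int_0^1 \phi'(x) F_n(x)\,d\,x = -\int_0^1 \phi'(x) F_n(x)\,d\,x,
\]
the boundary term vanishing because $F_n(0)=F_n(1)=0$. Thus $\int_0^1 \phi' F_n = 0$ for all $n$.

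Finally, suppose toward a contradiction that (\ref{l11}) holds, i.e. $\|F_n-\psi\|_\infty\to 0$. Because $\phi'$ is bounded with bounded support and hence lies in $\mathbb{L}_1(0,1)$, one has $\bigl|\int_0^1 \phi'(F_n-\psi)\bigr| \le \|F_n-\psi\|_\infty \int_0^1 |\phi'| \to 0$, so that $\int_0^1 \phi'\psi = \lim_n \int_0^1 \phi' F_n = 0$. On the other hand, a second integration by parts, using $\psi'=\phi$, $\psi(0)=0$ and $\psi(1)=\int_0^1\phi=0$, yields
\[
\int_0^1 \phi'(x)\psi(x)\,d\,x = \bigl[\phi\psi\bigr]_0^1 - \int_0^1 \phi(x)^2\,d\,x = -\|\phi\|^2 < 0,
\]
which contradicts $\int_0^1\phi'\psi=0$ and proves the lemma.

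The delicate point is the vanishing of the two boundary terms: it rests entirely on the zero-mean property of the periodic wavelets, which gives $F_n(1)=0$ and $\psi(1)=0$, together with $F_n(0)=\psi(0)=0$ by construction. The hypothesis $\|f_n\|<C$ enters only to ensure $f_n\in\mathbb{L}_2(0,1)$, so that $F_n$ is a genuine continuous primitive and the orthogonality $\int_0^1\phi f_n = 0$ is legitimate; beyond that the argument is an exact cancellation that is independent of the size of the coefficients. I would also remark that the restriction to levels $j\ge 2$ is inessential here, the argument working verbatim for any sum over levels $j\ge 1$.
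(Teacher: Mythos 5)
Your proof is correct, and it takes a genuinely different route from the paper's. The paper first downgrades the uniform convergence (\ref{l11}) to $\mathbb{L}_2$ convergence of the primitives, $\int_0^1(\psi(x)-F_n(x))^2\,dx=o(1)$, and then works in the trigonometric basis that (almost) diagonalizes the integration operator $A$, whose eigenvalues decay like $j^{-1}$: writing $\phi=\sum_j\theta_j\tau_j$ and $f_n=\sum_j\eta_{nj}\tau_j$, the hypothesis becomes $\sum_j j^{-2}(\theta_j-\eta_{nj})^2=o(1)$, which is then played against the same $\mathbb{L}_2$ orthogonality $\sum_j\theta_j\eta_{nj}=0$ that you use, together with the Fourier decay $|\theta_j|\le Cj^{-3/2}$ coming from the smoothness of $\phi$, to force some coefficient $\eta_{nj_n}$ to be so large that the weighted $\ell_2$ bound is violated. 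You avoid the spectral and Fourier-decay machinery entirely: two integrations by parts turn the cross-scale orthogonality $\int_0^1\phi f_n\,dx=0$ into $\int_0^1\phi' F_n\,dx=0$, a statement about primitives that is stable under uniform convergence, while a direct computation gives $\int_0^1\phi'\psi\,dx=-\|\phi\|_2^2\ne 0$; the boundary terms vanish by the zero-mean property of the wavelets, a fact the paper relies on throughout (it is what makes the $F_{nl}$ distribution functions), so invoking it is legitimate. Your argument needs less regularity (only $\phi\in C^1$ and zero mean, no coefficient decay estimate) and is quantitatively stronger: it yields the uniform separation $\sup_{0\le x\le 1}|\psi(x)-F_n(x)|\ge \|\phi\|_2^2/\|\phi'\|_{\mathbb{L}_1}>0$ for every $n$, not merely the impossibility of convergence. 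What the paper's route buys is coherence with the coefficient-space framework of the surrounding arguments --- the same expansions and decay estimates reappear in the proofs of Theorems \ref{tp1} and \ref{tp2} --- but as a proof of Lemma \ref{l1} itself, yours is cleaner and, if anything, rests on weaker hypotheses.
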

\begin{proof} Denote  $\Pi$  linear subspace of functions
\begin{equation*}
f =\sum_{j=2}^{\infty} \sum_{i=1}^{2^j} \theta_{ji} \phi_{ji}.
\end{equation*}
in $\mathbb{L}_2(0,1)$. Subspace $\Pi$  is closed subspace of $\mathbb{L}_2(0,1)$.

Define bounded operator
\begin{equation*}
A\,f(x) = \, \int_0^x f(s)\,d\,s, \quad f \in \mathbb{L}_2(0,1).
\end{equation*}
Note that (\ref{l11}) implies that there is sequence $f_n \in \Pi$ such that
\begin{equation}\label{lw1}
\int_0^1 \left(\psi  (x) - \int_0^x f_n(t)\,d\, t\right)^2\,d\,x\, =\, o(1)
\end{equation}
as $n \to \infty$.

Thus we will prove that (\ref{lw1}) does not hold. Suppose otherwise.

Note that integration operator $A$ has trigonometric eigenfunctions and eigenvalues $\lambda_j = c j^{-1}(1 + o(1))$ as $j \to \infty$. Therefore, expanding functions $\phi$ and $f_n$ into trigonometric series
\begin{equation*}
\phi(t) = \sum_{j=1}^\infty \theta_j \tau_j, \quad f_n = \sum_{j=1}^\infty \eta_{nj} \tau_j
\end{equation*}
with trigonometric system of functions $\tau_j$, $1 \le j < \infty$,
we can replace (\ref{lw1}) with
\begin{equation}\label{lw2}
\sum_{j=1}^\infty j^{-2}(\theta_j - \eta_{nj})^2 = o(1)
\end{equation}
Orthogonality $\phi$ and $f_n$ implies $\sum_{j}^\infty \theta_j \eta_{nj} =0$.

Denote $ r = \sum_{j=1}^\infty \theta_j^2$.

Since function $\phi$ is twice continuously differentiable then $|\theta_j| < C j^{-3/2}$ for all $1 \le j < \infty$.

   If (\ref{lw2}) holds, there are $C_n \to \infty$ and $\varepsilon_n \to 0$ as $n \to \infty$ such that
\begin{equation*}
\sum_{j=1}^{C_n}(\theta_j - \eta_{nj})^2 < \varepsilon_n.
\end{equation*}
Therefore
\begin{equation*}
\sum_{j=1}^{C_n}\theta_j \eta_{nj} = r + o(1).
\end{equation*}
Hence, using orthogonality $\phi$ and $f_n$, we have
\begin{equation}\label{lw12}
\sum_{C_n+1}^\infty \theta_j \eta_{nj} = - r + o(1).
\end{equation}
Since $|\theta_j| < C j^{-3/2}$ for all $1 \le j < \infty$, then (\ref{lw12}) implies existence of sequence  $j_n \to \infty$ as $n \to \infty$ such that $|\eta_{nj_n}| > c j_n^{3/2}$. This implies $j_n^{-2}(\theta_{j_n} - \eta_{nj_n})^2 \to \infty$ as $n \to \infty$ and we come to contradiction.\end{proof}
Denote $m_n = [\log_2 n]$.
\begin{proof}[Proof of Theorem \ref{tp1}]
 By G and Lemma \ref{l1}, it suffices to prove Theorem \ref{tp1} for sequence of functions
\begin{equation*}
f_n =\sum_{j=k_n - C_1}^{\infty} \sum_{i=1}^{2^j} \theta_{n_kji} \phi_{ji}.
\end{equation*}
Note that $\|\psi_{j1}\|_\infty = c 2^{-j/2}$ and $\|\phi_{j1}\|_\infty = c 2^{j/2}$, $1 \le j < \infty$.

Since $\|f_n\|_\infty < C_0 n^{-r}$, implementing Lemma \ref{l1}, we get that, for any $\delta > 0$, there is    $C_1$,  such that functions $\tilde F_{n\delta} = \sum_{j=k_n +C_1}^{\infty} \sum_{i=1}^{2^j} \theta_{n_kji} \psi_{ji}$ satisfy the following
\begin{equation}\label{l23}
\begin{split}&
n^{1/2}\|\, \tilde F_{n\delta} \,\|_\infty\, <\, C_2 n^{1/2} 2^{-k_n - C_1}  \|f_n\|_\infty \\& \le C_3 n^{1/2 - r} 2^{-k_n - C_1} \le C_3 2^{-C_1} \le \delta.
\end{split}
\end{equation}
Therefore, by Theorem \ref{tdf1},
there are $C$ and  $C_1$ such that,  for functions $F_{n\delta} = \sum_{j=k_n -C_1}^{k_n + C} \sum_{i=1}^{2^j} \theta_{nji} \psi_{ji}$, there holds
\begin{equation}\label{l24}
\|\,  F_{n\delta} \,\|_\infty \,\asymp\, n^{-1/2}.
\end{equation}
For any point $x \in [0,1]$  there is only finite number of indices $j$, $ k_n -c \le j \le k_n +C$ and $l$, $1 \le l \le 2^j$, such that $\theta_{njl} \phi_{jl}(x) \ne 0$. Hence by Lemma \ref{l1} and $\|\psi_{j1}\|_\infty \asymp 2^{-j/2}$, we get first statement Theorem \ref{tp1}.

Suppose $e_1 < l_n2^{-j_n} < e_2$ does not hold. Then there is subsequence $n_t \to \infty$ as $t \to \infty$ such that $l_{n_t} 2^{-j_{n_t}} \to 0$ or $l_{n_t} 2^{-j_{n_t}} \to 1$ as $t \to \infty$.
In this case, for subsequence $n_t$ Massey \cite{massey} example works, and we get that there is type I error probability  $\alpha$, $0 \le \alpha \le 1$, such that, for subsequence of Kolmogorov tests $K_{n_t}$, $\alpha(K_{n_t}) = \alpha + o(1)$, there holds
\begin{equation*}
\lim_{t \to \infty} \beta(K_{n_t}, f_{n_t}) = 1 - \alpha.
\end{equation*}
\end{proof}
\begin{proof}[ Proof of Theorem \ref{tp2}] We   begin with proof of {\it i.} in definition of
maxisets.

Let $j > k_n +C_1$. Then we get
\begin{equation}\label{l24a}
|\theta_{nji}| \le P_0 2^{-sj-j/2} \le C2^{-sk_n- j/2 -sC_1}= C\,2^{-\frac{2r}{1 - 2r} k_n}2^{-j/2-C_1s} = C\,n^{-r} 2^{-j/2-C_1s}.
\end{equation}
Therefore for any $\delta > 0$ there is $C_1$ such that $\|f_{n\delta}\|_\infty <\delta n^{-r}$ where
\begin{equation*}
f_{n\delta} = \sum_{j=k_n + C_1}^{\infty} \sum_{i=1}^{2^j} \theta_{n_kji} \phi_{ji}.
\end{equation*}
Hence there is $C$ such that  $\|\bar f_{nC}\|_\infty > C_2 n^{-r}$ where
\begin{equation*}
\bar f_{nC} = \sum_{j=k_n -c}^{k_n + C} \sum_{i=1}^{2^j} \theta_{n_kji} \phi_{ji}.
\end{equation*}
By Lemma \ref{l1}, this implies that there is sequences $j_n$, $k_n -c < j_n < k_n +C$ and $i_n$, $1 \le i_n \le 2^{j_n}$ such that $|\theta_{nj_ni_n}| > C_3 n^{-r} 2^{-j_n/2}  = Cn^{-1/4 -r/2}$.  By Theorem \ref{tp1} this implies consistency of sequence $f_n$.

It remains to verify {\it ii.} in definition of maxisets. Suppose opposite.  Let, for function   $f = \sum_{j=1}^{\infty} \sum_{i=1}^{2^j} \theta_{ji} \phi_{ji}$ we can point out sequences $j_t \to \infty$  as $t \to \infty$ and $l_t$, $ 1 \le l_t \le 2^{j_t}$, such that there holds
\begin{equation}\label{va1}
2^{j_t(s + 1/2)} |\theta_{j_tl_t}| = C_t,
\end{equation}
where $C_t \to \infty$  as $t \to \infty$.

Denote $c_t = \log_2 C_t$.

We put $\eta_{n_tji} =0$ for $1 \le j \le j_t$,  $1 \le i \le 2^{j_t}$ and  $\eta_{n_tji} = \theta_{ji}$ for $j > j_t$,  $1 \le i \le 2^{j_t}$.
Define subsequence of alternatives $f_{n_t} = \sum_{j=1}^{\infty} \sum_{i=1}^{2^j} \eta_{n_tji} \phi_{ji}$.
Here $n_t$ satisfies $\|\, f_{n_t}\,\|_\infty  \asymp n_t^{-r}$.

Then, by Lemma \ref{l1}, we get
\begin{equation}\label{va2}
\|\, f_{n_t}\,\|_\infty  \asymp 2^{j_t/2} |\theta_{j_tl_t}| \asymp 2^{-m_tr} \asymp n_t^{-r},
\end{equation}
where $m_t = [\log_2 n_t]$.

By (\ref{va1}) and (\ref{va2}), we  get
\begin{equation}\label{va3}
-m_t\,r = j_t/2 - j_t\,(s + 1/2)r + c_t
\end{equation}
To verify { \it ii.}, by inequality of Dvoretzky, Kiefer, Wolfowitz \cite{dv}, it suffices to show that there holds
\begin{equation}\label{va4}
2^{m_t/2} 2^{-j_t/2} \theta_{j_tl_t} = o(1)
\end{equation}
By (\ref{va1}), we get
\begin{equation}\label{va5}
2^{m_t/2} 2^{-j_t/2} \theta_{j_tl_t} \asymp 2^{m_t/2} C_t  2^{-j_t(s+1)}
\end{equation}
Hence, using (\ref{va3}) and $s - s/(2r) +1 = 0$, we get
\begin{equation}\label{va6}
2^{m_t/2} C_t  2^{-j_t(s+1)} \asymp C_t^{1 -1/(2r)} 2^{-l_t(s - s/(2r) +1)} \asymp C_t^{1 -1/(2r)} = o(1).
\end{equation}
This implies inconsistency of sequence of alternatives $f_{n_t}$.
\end{proof}
 \begin{proof}[Proof of Theorem \ref{tp2a}] First statement of Theorem \ref{tp2a} is easily deduced from Lemma \ref{s1} given below.
 \begin{lemma}\label{s1} Let $f_n = \sum_{j=1}^{k_n+c} \sum_{i=1}^{2^j} \theta_{ji} \phi_{ji}$, $\|f_n\|_\infty \asymp n^{-r}$. There is $P_0$ such that $f_n \in \mathbb{B}^s_{\infty\infty}(P_0)$.
\end{lemma}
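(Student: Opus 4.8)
The plan is to bound each wavelet coefficient $\theta_{ki}$ directly by $\|f_n\|_\infty$ through duality with the $\mathbb{L}_1$ norm of the wavelet, and then to exploit the fact that the expansion of $f_n$ terminates at level $k_n+c$ together with the arithmetic identity $2^{sk_n}\asymp n^{r}$. This is the upper-bound counterpart of the estimate (\ref{l24a}) used in the proof of Theorem \ref{tp2}.

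First I would use that the periodic wavelet system $\{\phi_{ki}\}$ is orthonormal in $\mathbb{L}_2(0,1)$, so that $\theta_{ki}=\int_0^1 f_n(x)\phi_{ki}(x)\,dx$ and hence $|\theta_{ki}|\le \|f_n\|_\infty\,\|\phi_{ki}\|_1$. Since $\phi$ has bounded support, the $\mathbb{L}_1$ scaling of the wavelets is $\|\phi_{ki}\|_1 = c\,2^{-k/2}$ (the same $2^{-k/2}$ scaling already recorded through $\|\psi_{j1}\|_\infty\asymp 2^{-j/2}$ and $\|\phi_{j1}\|_\infty\asymp 2^{j/2}$). Using only the upper half of the hypothesis $\|f_n\|_\infty\le C n^{-r}$, this yields the uniform coefficient bound
$$
|\theta_{ki}|\le C'\,n^{-r}\,2^{-k/2}\qquad\text{for all }k,i.
$$

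Next I would insert the Besov weight and use the truncation level. Multiplying by $2^{(s+1/2)k}$ gives $2^{(s+1/2)k}|\theta_{ki}|\le C'\,n^{-r}\,2^{sk}$. Because $\theta_{ki}=0$ for $k>k_n+c$, it suffices to control this for $1\le k\le k_n+c$, where $2^{sk}\le 2^{sc}\,2^{sk_n}$. The key identity is that, with $s=\frac{2r}{1-2r}$, one has $s(1/2-r)=r$, so that from $k_n=[(1/2-r)\log_2 n]\le (1/2-r)\log_2 n$ we get $2^{sk_n}\le 2^{s(1/2-r)\log_2 n}=n^{r}$. Therefore
$$
2^{(s+1/2)k}\,|\theta_{ki}|\le C'\,2^{sc}\,n^{-r}\,n^{r}=C'\,2^{sc},
$$
a constant independent of $n$, $k$ and $i$. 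Taking $P_0=C'\,2^{sc}$ gives $\sup_k 2^{(s+1/2)k}\sup_{1\le i\le 2^k}|\theta_{ki}|\le P_0$, which is exactly $f_n\in\mathbb{B}^s_{\infty,\infty}(P_0)$.

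I do not expect a genuine obstacle here; the statement is essentially a bookkeeping lemma. The only points that need care are the correct $\mathbb{L}_1$ scaling of the wavelets (equivalently the normalization making $\|\phi_{ki}\|_\infty\asymp 2^{k/2}$) and the exponent computation $s(1/2-r)=r$, which converts the cutoff level $k_n$ into the factor $n^{r}$ that cancels $n^{-r}$. The finiteness of the expansion at level $k_n+c$ is precisely what makes the supremum in the Besov norm uniformly bounded, so $P_0$ can be chosen independent of $n$.
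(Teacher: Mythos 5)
Your proof is correct and follows essentially the same route as the paper: the coefficient bound $|\theta_{ji}|\le C\|f_n\|_\infty 2^{-j/2}$, the identity $s(1/2-r)=r$ turning $2^{sk_n}$ into $n^r$, and absorption of the factor $2^{sc}$ from the truncation level into $P_0$. The only cosmetic difference is that you justify the coefficient bound by duality ($|\theta_{ji}|\le\|f_n\|_\infty\|\phi_{ji}\|_1$), whereas the paper invokes its Lemma \ref{l1} for the same estimate; the arithmetic afterwards is identical.
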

\begin{proof} Using $\|f_n\|_\infty \asymp n^{-r}$ and implementing \ref{l1} we get
\begin{equation*}
|\theta_{nji}| \le C n^{-r}2^{-j/2} = C 2^{-\frac{2r}{1 - 2r} k_n} 2 ^{-j/2} = C 2^{-s\,k_n} 2^{-j/2} \le 2^{-sj -j/2}.
\end{equation*}
\end{proof}
  For proof of first statement of Theorem \ref{tp2a} it suffices to put
$$
f_{1n} =  \sum_{j=1}^{k_n +C} \sum_{i=1}^{2^j} \theta_{nji} \phi_{ji},
$$
with constant $C$ defined in Theorem \ref{tp1}.  By Lemma \ref{s1}, we get $f_{1n} \in \mathbb{B}^s_{\infty\infty}(P_0)$.     By Theorem \ref{tp1},  there are $j_n$ and $i_n$ with $k_n - c < j_n < k_n +C$ and $1 \le i_n \le 2^{j_n}$, such that $|\theta_{nj_ni_n}| \asymp n^{-r} 2^{-j_n/2}$. Therefore, by Lemma  \ref{l1},  $\|f_{1n}\|_\infty \asymp n^{-r}$. By Lemma \ref{s1} this implies first statement of Theorem \ref{tp2a}.

Second statement of Theorem \ref{tp2a} is easily deduced from proof of {\sl i.} in Theorem \ref{tp2}.
\end{proof}

\begin{lemma} \label{l0} Let  $h_1(t)$ and $h_2(t)$, $t \in (a,b)$, $0< a < b < 1$ be two bounded real functions. Suppose
\begin{equation}\label{l01}
\max_{a < t < b} |h_1(t) - h_2(t)| < \delta, \quad \delta > 0.
\end{equation}
Then, for any $c$, there holds
\begin{equation}\label{l02}
\begin{split}&
|\mathbf{P}\,(\,\max_{a < t < b} |b(t) + h_1(t)| <\, c)  -
\mathbf{P}\,(\,\max_{a < t < b} |b(t) + h_2(t)| <\, c)|
\le C \,\delta,
\end{split}
\end{equation}
where $C$ does not depend on $h_1$ e $h_2$.
\end{lemma}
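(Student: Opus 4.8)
The plan is to reduce the statement to a one-sided anti-concentration (small-ball) estimate for the supremum of the Brownian bridge that is \emph{uniform in the drift}, and then to establish that estimate by a Tsirelson-type decomposition of $b$ along a single strictly positive direction of its Cameron--Martin space.

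First I would remove the dependence on $h_1,h_2$ by sandwiching. Write $M_i=\max_{a<t<b}|b(t)+h_i(t)|$. From $|h_1(t)-h_2(t)|<\delta$ one gets $|b(t)+h_2(t)|\le|b(t)+h_1(t)|+\delta$ and the symmetric bound, so $|M_1-M_2|\le\delta$ surely. Hence $\{M_1<c-\delta\}\subseteq\{M_2<c\}\subseteq\{M_1<c+\delta\}$, and the trivial inclusions $\{M_1<c-\delta\}\subseteq\{M_1<c\}\subseteq\{M_1<c+\delta\}$ also hold; thus both $\mathbf{P}(M_2<c)$ and $\mathbf{P}(M_1<c)$ lie in $[\mathbf{P}(M_1<c-\delta),\mathbf{P}(M_1<c+\delta)]$. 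Therefore $|\mathbf{P}(M_2<c)-\mathbf{P}(M_1<c)|\le\mathbf{P}(c-\delta\le M_1<c+\delta)$, and everything reduces to bounding this probability by $C\delta$ with $C$ independent of $h_1$ and $c$.

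Next I would split the two-sided maximum. With $M^+=\max_{a<t<b}(b(t)+h_1(t))$ and $M^-=\max_{a<t<b}(-(b(t)+h_1(t)))$ we have $M_1=\max(M^+,M^-)$, so if $M_1\in[c-\delta,c+\delta]$ then at least one of $M^+,M^-$ lies in $[c-\delta,c+\delta]$; hence $\mathbf{P}(c-\delta\le M_1<c+\delta)\le\mathbf{P}(c-\delta\le M^+\le c+\delta)+\mathbf{P}(c-\delta\le M^-\le c+\delta)$. Since $-b$ is again a Brownian bridge with bounded drift $-h_1$, it suffices to produce a uniform-in-drift Lipschitz bound for the distribution function of a one-sided supremum $M^+=\max_t(b(t)+h_1(t))$.

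This last bound is the heart of the matter and the step I expect to be the main obstacle. I would use Tsirelson's device: fix $e$ in the Cameron--Martin space $H$ of the bridge (absolutely continuous, $e(0)=e(1)=0$, $\|e\|_H^2=\int_0^1 e'(t)^2\,dt=1$) that is strictly positive on $[a,b]$, so $\kappa:=\min_{a\le t\le b}e(t)>0$; such $e$ exists because $[a,b]\subset(0,1)$. Decompose $b(t)=\nu\,e(t)+b^\perp(t)$, where $\nu=\langle b,e\rangle$ is the standard Gaussian variable associated with $e$ (reproducing property $\mathbf{E}[\nu\,b(t)]=e(t)$, $\mathbf{E}\nu^2=1$) and $b^\perp$ is the Gaussian remainder, independent of $\nu$. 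Freezing $b^\perp$ and $h_1$, the map $\nu\mapsto g(\nu):=\max_t(\nu\,e(t)+b^\perp(t)+h_1(t))$ is nondecreasing with slope at least $\kappa$: evaluating at a (near-)maximizer of $g(\nu_1)$ gives $g(\nu_2)-g(\nu_1)\ge(\nu_2-\nu_1)\kappa$ for $\nu_2>\nu_1$. Thus, conditionally on $b^\perp$, the event $\{c-\delta\le M^+\le c+\delta\}=\{g(\nu)\in[c-\delta,c+\delta]\}$ forces $\nu$ into an interval of length at most $2\delta/\kappa$, and since $\nu\sim\mathcal N(0,1)$ is independent of $b^\perp$ its conditional probability there is at most $(2\delta/\kappa)/\sqrt{2\pi}$. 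Taking expectation in $b^\perp$ gives $\mathbf{P}(c-\delta\le M^+\le c+\delta)\le 2\delta/(\kappa\sqrt{2\pi})$. The decisive point, which makes $C$ drift-independent, is that $h_1$ enters only through $b^\perp+h_1$ and affects neither $\kappa$ nor the law of $\nu$; the same estimate holds for $M^-$ with the same $\kappa$. Combining the three reductions yields $C=4/(\kappa\sqrt{2\pi})$, uniform in $h_1,h_2,c$. The only care needed is measurability of the suprema (work with the separable/continuous version of $b$) and the existence and normalization of the positive Cameron--Martin direction $e$; this is the density bound for suprema of Gaussian processes, cf. Lifshits \cite{lif}.
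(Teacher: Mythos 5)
Your proposal is correct and takes essentially the same route as the paper: the paper's decomposition $b(t)=\zeta(t)+\xi h(t)$, with $h$ the tent function ($h(t)=\sqrt{d-d^2}\,t/d$ for $t<d$, $h(t)=\sqrt{d-d^2}\,(1-t)/(1-d)$ for $t\ge d$) and $\xi$ standard Gaussian independent of $\zeta$, is exactly a decomposition of the bridge along a Cameron--Martin direction strictly positive on $(a,b)$, after which the paper, like you, conditions on the remainder and bounds the $\delta$-window probability by a standard Gaussian interval estimate. Your only departures are cosmetic: you split the two-sided maximum into two one-sided suprema and use the slope bound $\kappa$, whereas the paper uses directly that the two-sided event is an interval in $\xi$, yielding the bound $\Phi(\delta/a)-\Phi(-\delta/a)+\Phi(\delta/(1-b))-\Phi(-\delta/(1-b))$ in place of your $4\delta/(\kappa\sqrt{2\pi})$.
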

\begin{proof} By (\ref{l01})we get  $h_1(t) - \delta < h_2(t) < h_1(t) + \delta$ for $a < t < b$. Therefore, for proof of (\ref{l02}) it suffices to show
\begin{equation}\label{l03}
\begin{split}&
\mathbf{P}\,(\,\max_{a < t < b} |b(t) + h_1(t)| <\, c+ \delta)  - \mathbf{P}\,(\,\max_{a < t < b} |b(t) + h_1(t)| <\, c)   \\&
\le \Phi\Bigl(\frac{\delta}{a}\Bigr) - \Phi\Bigl(-\frac{\delta}{a}\Bigr) +  \Phi\Bigl(\frac{\delta}{1-b}\Bigr) -  \Phi\Bigl(-\frac{\delta}{1-b}\Bigr).
\end{split}
\end{equation}
Define function $h(t) = \sqrt{d- d^2} t/d $ for $0 < t < d$ and $h(t) = \sqrt{d- d^2}  \frac{1 -t}{1-d}$ for  $d \le t <1$, where $a < d < b$.

Then Brownian bridge can be written in the following form: $b(t) = \zeta(t) + \xi h(t)$, where Gaussian random process $\zeta(t)$  does not depend on random variable $\xi$, $\mathbf{E} [\xi] = 0$ and $\mathbf{E} [\xi^2] =1$.

Denote $\mu_\eta$  probability measure of random process $\eta(t) = \zeta(t) + h_1(t)$. Then right hand-side of (\ref{l02}) equals
\begin{equation}\label{l04}
\int\, d\,\mu_\eta (\mathbf{P}\,(\,\max_{a < t < b} |\eta(t) + \xi h(t) + h_1(t)| <\, c+ \delta)  - \mathbf{P}\,(\,\max_{a < t < b} |\eta(t) + \xi h(t) + h_1(t)| <\, c)).
\end{equation}
Thus it suffices to prove that for any bounded real function a $S\, : \, (a,b) \to \mathbb{R}^1$  there holds
\begin{equation}\label{l05}
\mathbf{P}\,(\,\max_{a < t < b} |S(t) + \xi h(t)| <\, c+ \delta)  - \mathbf{P}\,(\,\max_{a < t < b} |S(t) + h(t)| < \, c) \le C\,\delta,
\end{equation}
where $C$ does not depends on $S$.

 We prove slightly more simple statement for function  $h(t) = t$, $ t \in (0,1)$. In our case reasoning are more cumbersome.

 We can write  (\ref{l05}) in the following form
\begin{equation}\label{l06}
\begin{split}&
\mathbf{P}\,\Bigl(\,-\min_{a < t < b}\Bigl\{\frac{c+\delta}{t} + \frac{S(t)}{t}\Bigr\}   < \, \xi \,  < \,\min_{a < t < b}\Bigl\{\frac{c+\delta}{t} + \frac{S(t)}{t}\Bigr\}\Bigr)\\& - \mathbf{P}\,\Bigl(\,-\min_{a < t < b}\Bigl\{\frac{c}{t} + \frac{S(t)}{t}\Bigr\}   < \, \xi \,  < \,\min_{a < t < b}\Bigl\{\frac{c}{t} + \frac{S(t)}{t}\Bigr\}\Bigr)  \\& \le
\Phi\left(\frac{\delta}{a}\right) - \Phi\left(-\frac{\delta}{a}\right).
\end{split}
\end{equation}
Hence, using (\ref{l04}), (\ref{l06}), we get          (\ref{l02}).
\end{proof}

\begin{proof}[Proof of Theorem \ref{tp3}] Reasoning are based on Lemma  \ref{l0}.
Denote
\begin{equation*}
\bar f_{nC} = \sum_{j=1}^{k_n+C} \sum_{i=1}^{2^j} \theta_{ji} \phi_{ji}
\end{equation*}
and $\tilde f_{nC} = f_n - \bar f_{nC}$.

Define function $\bar F_{nC} $ such that $\frac{d \bar F_{nC}(x)}{d\,x} = 1 + \bar f_{nC}(x)$, $x \in [0,1]$ and $\bar F_{nC}(1) = 1$. Denote $\tilde F_{nC} = F_n - \bar F_{nC} + F_0$.

By Lemma \ref{s1}, for any $C$, there is  $P_0$ such that $\bar f_{nC} \in \mathbb{B}^s_{\infty\infty}(P_0)$. By Theorem \ref{tp1}  and Lemma \ref{l1}, there is  $C$  such that  $\| \bar f_{nC}\| \asymp n^{-r}$. This implies {\it i.}

By (\ref{l23}), for any $\delta> 0$ we can choose $C$ such that $n^{1/2} T(\tilde F_{nC}) < \delta$. By Lemma \ref{l0}, this implies {\it i.} and {\it ii.}
\end{proof}
If sequence $F_{1n}$ is inconsistent, then, by Theorem \ref{tdf4}, $n^{1/2} T(F_{1n})  \to 0$ as $n \to \infty$. Hence, by Lemma \ref{l0}, we get Theorem \ref{tp4}.

Theorem \ref{tq6} are easily deduced from Theorem \ref{tp101} and proof is omitted.

Theorem \ref{tq12} follows from Lemma \ref{s1}  and Theorem \ref{tq6}.
\subsection{Proof of Theorem \ref{tco2}}
We  prove only second statement of Theorem \ref{tco2}.

Let $m_n \to \infty$ be such a sequence that there is
\begin{equation*}
|\theta_{m_ni_{m_n}}| > (1 -\delta)\max\{|\theta_{ji}|\,:\, j \ge m_n, \, 1 \le i \le 2^j\}
 \end{equation*}
 with  $0 < \delta < 1/2$.

Define subsequence $l_{m_n}^{-1/2} \asymp 2^{m_n} \, |\theta_{ m_ni_{m_n}}|$.

Define subsequence of functions $f_{l_{m_n}} = \sum_{j=l_{m_n}}^\infty \sum_{i=1}^{2^j} \theta_{nji} \phi_{ji}$.

Denote $F_{l_{m_n}}(x) = x + \int_0^x f_{l_{m_n}}(t)\,d\, t$, $x \in [0,1]$.

Then, by Lemma \ref{l1}, there holds
\begin{equation*}
\max_{x \in (0.1)} |\, F_{l_{m_n}}(x) - F_0(x)\,| \asymp 2^{-m_n} l_{m_n}^{-1/2} = o(l_{m_n}^{-1/2}).
\end{equation*}
By Theorem \ref{tdf4}, this implies inconsistency of subsequence of alternatives  $F_{l_{m_n}}$.

\end{document}